\pgfplotsset{compat=1.15}
\newcommand{\footremember}[2]{%
    \footnote{#2}
    \newcounter{#1}
    \setcounter{#1}{\value{footnote}}%
}
\def\R{{\mathbb R}}
\def\C{{\mathbb C}}
\def\N{{\mathbb N}}
\DeclareMathOperator{\lc}{lc}
\DeclareMathOperator{\inter}{int}
\newtheorem{theorem}{\bf Theorem}
\newtheorem{lemma}{\bf Lemma}
\newtheorem{algorithm}{\bf Algorithm}
\newtheorem{example}{\bf Example}
\newtheorem{proposition}{\bf Proposition}
\newtheorem{corollary}{\bf Corollary}
\newtheorem{definition}{\bf Definition}
\newtheorem{remark}{\bf Remark}
\providecommand{\keywords}[1]
{
  \small	
  \textbf{\textbf{Keywords:}} #1
}
\begin{document}
\definecolor{qqzzff}{rgb}{0,0.6,1}
\definecolor{ududff}{rgb}{0.30196078431372547,0.30196078431372547,1}
\definecolor{xdxdff}{rgb}{0.49019607843137253,0.49019607843137253,1}
\definecolor{ffzzqq}{rgb}{1,0.6,0}
\definecolor{qqzzqq}{rgb}{0,0.6,0}
\definecolor{ffqqqq}{rgb}{1,0,0}
\definecolor{uuuuuu}{rgb}{0.26666666666666666,0.26666666666666666,0.26666666666666666}
\newcommand{\vi}[1]{\textcolor{blue}{#1}}
\newif\ifcomment
\commentfalse
\commenttrue
\newcommand{\comment}[3]{%
\ifcomment%
	{\color{#1}\bfseries\sffamily#3%
	}%
	\marginpar{\textcolor{#1}{\hspace{3em}\bfseries\sffamily #2}}%
	\else%
	\fi%
}

\newcommand{\mapr}[1]{{{\color{blue}#1}}}
\newcommand{\revise}[1]{{{\color{blue}#1}}}
\newcommand{\victor}[1]{{{\color{red}#1}}}

\title{Semi-algebraic description of the closure of  the image of a semi-algebraic set under a polynomial}

\author{%
Ngoc Hoang Anh Mai\footremember{1}{Independent scholar, Toulouse, France.}
  }

\maketitle

\begin{abstract}
Given a polynomial $f$ and a semi-algebraic set $S$, we provide a symbolic algorithm to find the equations and inequalities defining a semi-algebraic set $Q$ which is identical to the closure of the image of $S$ under $f$, i.e., 
\begin{equation}
Q=\overline{f(S)}\,.
\end{equation}
Consequently, every polynomial optimization problem whose optimum value is finite has an equivalent form with attained optimum value, i.e., 
\begin{equation}
\min \limits_{t\in Q} t =\inf\limits_{x\in S} f(x)
\end{equation}  
whenever the right-hand side is finite.
Given $d$ being the upper bound on the degrees of $f$ and polynomials defining $S$, we prove that our method requires $O(d^{O(n)})$ arithmetic operations to produce polynomials of degrees at most $d^{O(n)}$ that define $\overline{f(S)}$.
\end{abstract}
\keywords{semi-algebraic set; Tarski--Seidenberg's theorem; quantifier elimination; sum of squares; Nichtnegativstellensatz; polynomial optimization; semidefinite programming}
\tableofcontents
\section{Introduction}
Let $\R[x]$ stand for the ring of polynomials of real coefficients in vector of variables $x=(x_1,\dots,x_n)$.
The set $W\subset \R^n$ is called elementary semi-algebraic, if it can be written in the form
\begin{equation}
W = \{x\in\R^n\,:\,p_1(x)> 0,\dots,p_m(x)> 0\,,\,q_1(x)=\dots=q_{l}(x)=0\}\,,
\end{equation}
where $p_i,q_j\in\R[x]$.
We call $p_i$ (resp. $q_j$) inequality (resp. equality) polynomial defining $W$.
A set is called semi-algebraic, if it is the union of a finite number of elementary semi-algebraic sets.
Denote by $\overline{A}$ the closure of a set $A\subset\R^n$ in the usual topology on $\R^n$.

Known as a central result of real algebraic geometry, Tarski--Seidenberg's theorem \cite{tarski1951decision,seidenberg1954new} says that the projection of a semi-algebraic subset of $\R^{n}$ onto the space spanned by the first $n-1$ coordinates is  semi-algebraic. 
Using a finite number of arithmetic operations, we can write down explicitly the equations and inequalities defining this projection.
Therefore, algorithms that allow us to do so are fundamental and widely used in various applications of real algebraic geometry.

\paragraph{Contribution.}
In this paper, we provide an algorithm to symbolically compute polynomials defining the closure of the image of a semi-algebraic set $S$ under a polynomial $f$.
Moreover, we obtain explicitly $\overline{f(S)}$ as a finite union of closed semi-algebraic sets of the form
\begin{equation}\label{eq:cl.semialg.set}
\{t\in\R\,:\,g_1(t)\ge 0,\dots,g_v(t)\ge 0\,,\,h_1(t)=\dots=h_k(t)=0\}\,,
\end{equation}
where $g_i,h_j$ are univariate polynomials.
Our method consists of the following three steps:
\begin{enumerate}
\item By Tarski--Seidenberg's theorem, the image $f(S)$ is a semi-algebraic set. Moreover, we obtain its (in)equality polynomials by using Basu's algorithm in \cite[Section 3]{basu1999new}. 
\item 
We then convert these polynomial descriptions to new ones defining elementary semi-algebraic sets $B_1,\dots,B_r$ whose union is $f(S)$ such that the following conditions hold:
\begin{enumerate}
\item Each inequality polynomial defining $B_j$ has only simple roots.
\item Any inequality polynomial defining $B_j$ has no root in the variety determined by the equality polynomials defining $B_j$.
\item Any couple of inequality polynomials determining each $B_j$ has no common root. 
\end{enumerate}
\item Replacing strict inequalities defining $B_j$ with non-strict ones, we obtain a new semi-algebraic set $\widetilde B_j$.
We guarantee that the closure $\overline{f(S)}$ is identical to the union of $\widetilde B_j$s. 
\end{enumerate}
The main tools utilized in the last two steps are the fundamental theorem of algebra and the greatest common divisor.

Let $d$ be an upper bound on the degrees of $f$ and polynomials defining $S$. 
Our method has a complexity of $O(d^{O(n)})$ to return polynomials of degrees at most $d^{O(n)}$ that define $\overline{f(S)}$.
This complexity follows from the complexity of Tarsi--Seidenberg's theorem analyzed by Basu in \cite[Theorem 1]{basu1999new}.


(Another possible way is to rely on Basu's method in \cite[Section 2.1, Example]{basu1999new} for computing the closure of a semi-algebraic set to find the semi-algebraic description of $\overline{f(S)}$. 
However, we are also not sure if this method allows us to obtain $\overline{f(S)}$ as a finite union of closed semi-algebraic sets of the form \eqref{eq:cl.semialg.set}, on which we can build some Nichtnegativstellens\"atze for univariate polynomials.)
\paragraph{Motivation.} One of the applications of our method is to address the attainability issue in polynomial optimization.
More explicitly, consider 
\begin{equation}\label{eq:pop}
f^\star=\inf_{x\in S}f(x)\,,
\end{equation}
where $f$ is a polynomial in $\R[x]$ and $S$ is a semi-algebraic subset of $\R^n$. 
Assume that the optimum value $f^\star$ is finite, i.e., $f^\star\in\R$.

It is well-known that when $S$ is a compact semi-algebraic set of the form
\begin{equation}\label{eq:cl.semialg.set.closed}
\{x\in\R^n\,:\,p_1(x)\ge 0,\dots,p_m(x)\ge 0\,,\,q_1(x)=\dots=q_{l}(x)=0\}\,,
\end{equation}
the optimal value  $f^\star$ is approximated from below as closely as desired by the sum-of-squares strengthenings introduced by Lasserre in \cite{lasserre2001global}.
If $f$ attains $f^\star$ on non-compact $S$ of form \eqref{eq:cl.semialg.set.closed}, we can approximately compute $f^\star$ using Mai--Lasserre--Magron's in \cite{mai2021positivity}.
Moreover, tools that allow us to compute $f^\star$ exactly in Demmel--Nie--Powers' and Mai's work  \cite{demmel2007representations,mai2022exact} require the attainability of $f^\star$.

However, the problem \eqref{eq:pop} possibly has no optimal solution (i.e., $f^\star$ is not attained).
For instance, if $f=x_1$ and $S=\{x\in\R^2\,:\,x_1x_2^2=1\}$, then $f^\star=0$ is not attained by $f$ on $S$.
 
We now formulate \eqref{eq:pop} as $f^\star=\inf f(S)$. 
A simple idea to make $f^\star$ attained is replacing the image $f(S)$ with its closure. 
This gives
\begin{equation}\label{eq:attain.image}
f^\star=\min \overline{f(S)}
\end{equation}
since $f^\star\in \overline{f(S)}$.
The challenge is to transform \eqref{eq:attain.image} back to a polynomial optimization problem with attained optimum value $f^\star$.
In other words,  we find a polynomial $g$ and a semi-algebraic set $Q$ such that 
\begin{equation}\label{eq:pop.attained}
f^\star=\min_{y\in Q}g(y)\,.
\end{equation}

Our method allows us to obtain $Q=\overline{f(S)}$ and $g$ as the identity polynomial in a single variable, i.e., $g(t)=t$.
In principle, $f(S)$ is a semi-algebraic subset of $\R$ and therefore is the union of finitely many intervals $I_1,\dots,I_s$ in $\R$.
Here $I_j$s are of the following forms: $(a,b)$, $[a,b)$, $(a,b]$, $[a,b]$, $(a,\infty)$, $[a,\infty)$, $(-\infty,b]$, $(-\infty,b]$, $[a,a]=\{a\}$ with $-\infty<a< b<\infty$.

The closure $\overline{f(S)}$ is then the union of $\overline{I_1},\dots,\overline{I_s}$, where each $\overline{I_j}$ contains $I_j$ and its endpoints.
Note that the endpoints of $I_j$s are the real roots of (in)equality polynomials defining $f(S)$.
It leads to using numerical methods to find the real roots of a system of univariate polynomials.
In contrast, our method relies only on symbolic computation to obtain the equations and inequalities determining $\overline{f(S)}$.

It remains to solve the problem \eqref{eq:pop.attained},
where $Q=\overline{f(S)}$ is a semi-algebraic subset of $\R$ and $g$ is the identity polynomial.
In this case, we can exactly compute the optimum value $f^\star$ by using the sum-of-squares strengthenings under the boundedness assumption of $f(S)$.
\paragraph{Previous works.}
In \cite{mai2022symbolic}, the author provides a symbolic algorithm to compute polynomials defining the image $f(S)$ when $f(S)$ has finitely many points in $\R$ and $S$ is of form \eqref{eq:cl.semialg.set.closed}. 
In this case, $f(S)=\overline{f(S)}$, and $f(S)$ is also identical to its Zariski closure, the smallest variety  containing $f(S)$.
Here a variety is an elementary semi-algebraic set defined only by equations.
The method in \cite{mai2022symbolic} depends on the computations of real radical generators and Groebner bases.

Regarding the attainability issue for the polynomial optimization problem \eqref{eq:pop}, we refer the readers to \cite{schweighofer2006global,
ha2009solving,dinh2014frank,pham2019optimality}.
In \cite{schweighofer2006global}, Schweighofer deals with the case where $S=\R^n$ and $f^\star$ is not attained by $f$.
He replaces the gradient ideals used in Nie--Demmel--Sturmfels' method in \cite{nie2006minimizing} with the gradient tentacles to obtain appropriate sum-of-squares strengthenings for the problem \eqref{eq:pop}.
H\`a and Pham handle the constraint case of $S$ of form \eqref{eq:cl.semialg.set.closed} in \cite{ha2009solving}.
They propose the truncated tangency variety $\Lambda$ for $f$ and $S$ with property saying that $f$ has infimum value $f^\star$ on $\Lambda$ under regularity assumption even if $f$ does not attain on $S$. 
It also allows them to obtain sum-of-squares strengthenings for the problem \eqref{eq:pop}.
Dinh, Ha, and Pham provide in \cite{dinh2014frank} a Frank--Wolfe type theorem for $f^\star$ bounded from below on $S=\{x\in\R^n\,:\,p_1(x)\ge 0,\dots, p_m(x)\ge 0\}$.
It says that if the polynomial map $x\mapsto (f(x),p_1(x),\dots,p_m(x))$ is convenient and non-degenerate at infinity, then $f$ attains its infimum $f^\star$ on $S$.
In \cite{pham2019optimality}, Pham derives versions at infinity of the Fitz John and  Karush--Kuhn--Tucker conditions to deal with the case where $f^\star$ is not attained by $f$ on $S$ of form \eqref{eq:cl.semialg.set.closed}.
He also indicates the existence of polynomial $g$ and semi-algebraic set $Q$ such that \eqref{eq:pop.attained} holds under the non-degenerate assumption at infinity.

For comparison purposes, our method in this paper does not require any assumption on $f$ and $S$ except the finiteness of $f^\star$.
Besides, we convert the original problem \eqref{eq:pop} in $n$-dimensional space to equivalent form \eqref{eq:pop.attained} in one-dimensional space.
The methods in \cite{schweighofer2006global,
ha2009solving,pham2019optimality,dinh2014frank} preserve the  dimension in obtaining equivalent forms.
Moreover, their transformations to equivalent forms are much simpler than ours as we use a lot of arithmetic operations on $f$ and polynomials defining $S$.
\paragraph{Organization.}
We organize the paper as follows: 
Section \ref{sec:Preliminaries} presents some preliminaries and lemmas needed to prove our main results.
Section \ref{sec:prod.linear.factors.odd} is to provide an algorithm that allows us to obtain the product of linear factors of a polynomial occurring odd times.
Section \ref{sec:replace.strict.ineq} is to prove that replacing strict inequalities defining a ``nice'' elementary semi-algebraic set with non-strict ones allows us to obtain the closure of this set.
Section \ref{sec:main.alg.app.poly.opt} is to present the main algorithm that enables us to obtain semi-algebraic descriptions of the closures of the images of semi-algebraic sets under polynomials.
\section{Preliminaries}
\label{sec:Preliminaries}

\subsection{Projections of semi-algebraic sets and quantifier elimination}
We restate Tarski--Seidenberg's theorem in the following lemma:
\begin{lemma}\label{lem:Tarski--Seidenberg}
Let $S$ be a semi-algebraic subset of $\R^n$ and $\pi:\R^{n}\to \R^{n-1}$ be the projector onto the space spanned by the first $n-1$ coordinates. 
Then the projection $\pi(S)$ is semi-algebraic. 
\end{lemma}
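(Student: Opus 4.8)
The plan is to reduce the statement to a one-variable elimination problem and then invoke the classical machinery of sign determination for univariate polynomials. Since the projector $\pi$ commutes with finite unions and every semi-algebraic set is a finite union of elementary ones, it suffices to treat a single elementary set $S=\{(x,y)\in\R^{n-1}\times\R : p_1(x,y)>0,\dots,p_m(x,y)>0,\ q_1(x,y)=\dots=q_l(x,y)=0\}$, where I write $x=(x_1,\dots,x_{n-1})$ and $y=x_n$. Then $\pi(S)=\{x\in\R^{n-1} : \exists y\in\R,\ \bigwedge_i p_i(x,y)>0 \wedge \bigwedge_j q_j(x,y)=0\}$, and the goal is to show that the set of parameters $x$ for which this existential sentence holds is cut out by a Boolean combination of polynomial sign conditions in $x$ alone, hence is semi-algebraic.

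First I would regard each $p_i(x,y)$ and $q_j(x,y)$ as a polynomial in $y$ whose coefficients are polynomials in $x$. Whether a real $y$ realizes the prescribed sign pattern depends only on the arrangement of the real roots of the family $\{p_i,q_j\}$ along the real line together with the sign of each $p_i$ on each of the resulting intervals and at the roots of $\prod_j q_j$. The fundamental fact I would use, supplied by the theory of subresultants and Sturm--Habicht sequences (equivalently, by Hermite's quadratic form), is that the number of distinct real roots of a univariate polynomial, and the signs a second polynomial takes at those roots, are determined by the signs of finitely many principal subresultant coefficients and related quantities, all of which are polynomials in the coefficients and therefore polynomials in $x$. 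Enumerating the finitely many possible sign vectors of these subresultant polynomials partitions $\R^{n-1}$ into finitely many pieces on each of which the real-root structure of the family is constant; on each piece one checks, by a finite combinatorial rule, whether a $y$ with the desired signs exists. Taking the union of the pieces on which the answer is ``yes'' exhibits $\pi(S)$ as a finite union of elementary semi-algebraic sets.

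The one genuine subtlety, and the step I expect to be the main obstacle, is the possible vanishing of leading coefficients: the degree of $p_i(x,\cdot)$ in $y$ drops on the subvariety of $\R^{n-1}$ where its top coefficients vanish, and the subresultant formalism is sensitive to the formal degree. I would handle this by a finite case split, partitioning $\R^{n-1}$ according to which leading coefficients of each $p_i$ and $q_j$ vanish and which one is the first nonzero, and then running the argument above on each piece with the correctly truncated polynomials. Since there are only finitely many such pieces and the construction terminates on each, the result is still a finite union of elementary semi-algebraic sets, which proves the lemma. (An alternative that avoids subresultants is the Cohen--H\"ormander inductive scheme: repeatedly replace the finite family of polynomials in $y$ by the family of their derivatives and pairwise division remainders, lowering the degree, and argue by induction on degree that the sign conditions between consecutive roots are expressible over $x$; this trades the subresultant bookkeeping for an induction, but the leading-coefficient case analysis is still required.)
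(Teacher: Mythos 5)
The paper offers no proof of this lemma at all: it is stated as a verbatim restatement of the classical Tarski--Seidenberg theorem with citations to Tarski and Seidenberg, and the algorithmic content the paper actually uses is delegated to Basu's quantifier-elimination procedure. So there is nothing in the paper to compare your argument against line by line; what you have written is a reconstruction of the standard one-variable elimination proof, and in outline it is correct. The reduction to a single elementary set via $\pi(\bigcup_j S_j)=\bigcup_j\pi(S_j)$ is right, viewing the $p_i,q_j$ as univariate in $y$ with coefficients in $\R[x]$ is the right setup, and your identification of the leading-coefficient degeneration as the main technical nuisance, to be handled by a finite case split on which top coefficients vanish, is exactly where the bookkeeping lives.

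One point in the middle of your sketch is under-specified and would need care in a full write-up. Knowing the number of real roots of each polynomial and the signs of the other polynomials \emph{at} those roots (which is what Hermite forms and Sturm--Habicht sequences deliver directly) does not by itself determine the signs of the $p_i$ on the \emph{open intervals} between consecutive roots, and it is precisely on those intervals that a witness $y$ for the strict inequalities must be found when $l=0$. The standard fixes are either to enlarge the family so that every relevant interval contains a root of some member --- e.g.\ adjoin the derivative of the product $\prod_i p_i\prod_j q_j$, or work with infinitesimal perturbations / points beyond the largest root --- or to run the Cohen--H\"ormander induction you mention in your parenthetical, which closes the family under derivatives and remainders and thereby makes the interval signs part of the inductive data. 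Your alternative route therefore is not merely optional bookkeeping: some device of this kind is needed to make the ``finite combinatorial rule'' on each piece of the partition actually decidable from polynomial sign conditions in $x$ alone. With that supplied, the argument is complete and proves the lemma.
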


A first-order formula in the language of real closed fields with free variables $x =(x_1,\dots,x_n)$ is obtained as follows recursively:
\begin{enumerate}
\item If $f \in \R[x]$, $n \ge 1$, then
$f=0$ and $f>0$ are first-order formulas and $\{x\in\R^n\,:\,f(x)=0\}$ and $\{x\in\R^n\,:\, f(x) > 0\}$ are respectively the subsets of $\R^n$ such that the formulas $f = 0$ and $f>0$ hold.
\item If $\varphi$ and $\psi$ are first-order formulas, then $\varphi \wedge \psi$ (conjunction), $\varphi \lor \psi$ (disjunction)
and $\lnot\varphi$ (negation) are also first-order formulas.
\item If $\varphi$ is a first-order formula and $x$ is a variable ranging over $\R$, then $\exists x\varphi$ and $\forall x\varphi$ are first-order formulas.
\end{enumerate}
The formulas obtained by using only rules 1 and 2 are called quantifier-free
formulas.

It follows from definitions that a subset $S \subset \R^n$ is semi-algebraic if and only if
there exists a quantifier-free formula $\varphi$ such that
\begin{equation}\label{eq:semial.quantifier}
S=\{x\in\R^n\,:\,\varphi(x)\}\,.
\end{equation}

Eliminating the quantifier from $\exists x_{n}\varphi$, is the same as computing a quantifier-free description of the image 
\begin{equation}
\pi(S)=\{x'\in \R^{n-1}\,:\,\exists x_{n}\varphi(x',x_n)\}\,,
\end{equation}
where $x'=(x_1,\dots,x_{n-1})$ and $\pi$ is the projector defined as in Lemma \ref{lem:Tarski--Seidenberg}.
Tarski--Seidenberg's theorem (Lemma \ref{lem:Tarski--Seidenberg}) allows us to replace $\exists x_{n}\varphi(x',x_{n})$ by some quantifier-free formula with vector of variables $x'$. 

The following example is to illustrate the equivalence between eliminating a quantifier and computing semi-algebraic description of the projection of a semi-algebraic set:
\begin{example}
Since $\exists t(at^2+bt+c = 0)$
is equivalent to the quantifier-free formula $(a \ne 0 \wedge b^2 - 4ac \ge 0) \lor (a = 0 \wedge b \ne
0) \lor (a = 0 \wedge b = 0 \wedge c = 0)$, the projection of semi-algebraic set 
\begin{equation}
\{(a,b,c,t)\in\R^4\,:\,at^2+bt+c = 0\}
\end{equation}
onto the space spanned by the first three coordinates is semi-algebraic set
\begin{equation}
\begin{array}{rl}
&\{(a,b,c)\in\R^3\,:\,a \ne 0,b^2 - 4ac \ge 0\}\\
\cup &\{(a,b,c)\in\R^3\,:\,a = 0 , b \ne
0\}\\
\cup& \{(a,b,c)\in\R^3\,:\,a = b = c = 0\}\,.
\end{array}
\end{equation}
\end{example}
Basu suggests his quantifier elimination algorithm in \cite[Section 3]{basu1999new}.
We use his method in the first step of our main algorithm stated later.
\begin{remark}\label{rem:complex.Tarski}
Heintz, Roy, and  Solern\'o analyze the complexity of Tarski--Seidenberg's theorem in \cite{heintz1990complexite}.
An improvement is given by Basu in \cite{basu1999new} using terms of quantifier elimination.
Let $S\subset \R^n$ be a semi-algebraic set determined by $s$ polynomials of degree at most $d$.
Basu's method requires $s^{2}d^{nO(1)}$ arithmetic operations to find polynomials of degrees upper bounded by $d^{O(1)}$ that define the projection of $S$ onto the space spanned by the first $n-1$ coordinates.
\end{remark}

\subsection{Polynomial images of semi-algebraic sets}
A mapping $\varphi:\R^n\to \R^d$ is called semi-algebraic if its graph $\{(x,\varphi(x))\,:\,x\in\R^n\}$ is a semi-algebraic subset of $\R^{n+d}$.
The following lemma follows from Tarski--Seidenberg's theorem (Lemma \ref{lem:Tarski--Seidenberg}):
\begin{lemma}\label{lem:consequence.Tarski--Seidenberg}
Let $S$ be a semi-algebraic subset of $\R^n$ and $\varphi:\R^n\to \R^d$ be a semi-algebraic mapping. 
Then the image $\varphi(S)$ is semi-algebraic. 
\end{lemma}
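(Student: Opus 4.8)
The plan is to exhibit $\varphi(S)$ as an iterated one-variable projection of a semi-algebraic set and then apply Lemma \ref{lem:Tarski--Seidenberg} $n$ times. First I would record the elementary closure properties of semi-algebraic sets that are immediate from the quantifier-free formula description \eqref{eq:semial.quantifier}: if $A=\{z\in\R^m:\alpha(z)\}$ and $B=\{z\in\R^m:\beta(z)\}$ with $\alpha,\beta$ quantifier-free, then $A\cap B=\{z:\alpha(z)\wedge\beta(z)\}$ and $A\cup B=\{z:\alpha(z)\vee\beta(z)\}$ are semi-algebraic; similarly, for $A\subset\R^a$ and $B\subset\R^b$ the Cartesian product $A\times B=\{(x,y):\alpha(x)\wedge\beta(y)\}\subset\R^{a+b}$ is semi-algebraic, and the image of a semi-algebraic set under any permutation of coordinates is semi-algebraic (permute the variables in the defining formula).

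Next I would set
\[
T:=\bigl(S\times\R^d\bigr)\cap\Gamma_\varphi\,,\qquad \Gamma_\varphi:=\{(x,\varphi(x)):x\in\R^n\}\subset\R^{n+d}\,,
\]
so that $T=\{(x,y)\in\R^n\times\R^d:x\in S,\ y=\varphi(x)\}$. By hypothesis $\Gamma_\varphi$ is semi-algebraic, and $S\times\R^d$ is semi-algebraic by the product property above, hence $T$ is semi-algebraic as an intersection of two semi-algebraic subsets of $\R^{n+d}$. Moreover $\varphi(S)$ is precisely the image of $T$ under the projection $\R^{n+d}\to\R^d$ forgetting the first $n$ coordinates. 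To finish, I would eliminate $x_1,\dots,x_n$ one coordinate at a time: at each step, compose with the coordinate permutation that sends the coordinate to be discarded into the last slot (harmless by the permutation property), then apply Lemma \ref{lem:Tarski--Seidenberg}, which says exactly that forgetting the last coordinate of a semi-algebraic set produces a semi-algebraic set. After $n$ such steps we are left with $\varphi(S)\subset\R^d$, which is therefore semi-algebraic.

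There is essentially no genuine obstacle here: the statement is a bookkeeping consequence of Lemma \ref{lem:Tarski--Seidenberg} together with Boolean closure. The only points requiring a little care are that Lemma \ref{lem:Tarski--Seidenberg} is phrased for dropping a single (the last) coordinate, so one must interleave coordinate permutations and be sure to iterate until \emph{all} of $x_1,\dots,x_n$ have been removed. Equivalently — and perhaps more transparently — one may run the whole argument in the language of first-order formulas: writing $S=\{x:\alpha(x)\}$ and $\Gamma_\varphi=\{(x,y):\beta(x,y)\}$ with $\alpha,\beta$ quantifier-free, the set $\varphi(S)$ is $\{y\in\R^d:\exists x_1\cdots\exists x_n\,(\alpha(x)\wedge\beta(x,y))\}$, and quantifier elimination (the reformulation of Lemma \ref{lem:Tarski--Seidenberg} discussed in Section \ref{sec:Preliminaries}) turns this into a quantifier-free formula in $y$.
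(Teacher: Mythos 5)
Your argument is correct and is exactly the route the paper intends: the paper states this lemma without proof as a direct consequence of Lemma \ref{lem:Tarski--Seidenberg}, and the graph-intersect-cylinder construction followed by iterated projection is precisely the construction the paper itself spells out later in Remark \ref{rem:complex.polyimage.semi} for the special case $\varphi=f$. Your proposal simply fills in the bookkeeping (products, permutations, $n$-fold application of the one-variable projection) that the paper leaves implicit.
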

\begin{remark}\label{rem:alg.consequence.Tarski--Seidenberg}
Korkina and Kushnirenko provide in \cite{korkina1985another} an algorithm to find the equations and inequalities defining the image $\varphi(S)$ (see also Coste's book \cite[Sections 1 and 2]{coste2000introduction}).
\end{remark}

The following lemma is a trivial consequence of Lemma \ref{lem:consequence.Tarski--Seidenberg} and Remark \ref{rem:alg.consequence.Tarski--Seidenberg}:
\begin{lemma}\label{lem:polyimage.semi}
Let $f$ be a polynomial and $S$ be a semi-algebraic set.
Then the image $f(S)$ is a semi-algebraic subset of $\R$.
Moreover, there is a symbolic algorithm that produces the equations and inequalities defining $f(S)$.
\end{lemma}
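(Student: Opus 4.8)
The plan is to realize $f(S)$ as a coordinate projection of a semi-algebraic set living in one dimension higher, and then invoke the constructive form of Tarski--Seidenberg's theorem. First I would observe that $f$, viewed as a map $\R^n\to\R$, is a semi-algebraic mapping in the sense of Section \ref{sec:Preliminaries}: its graph $\{(x,t)\in\R^{n+1}\,:\,t-f(x)=0\}$ is a variety, hence an elementary semi-algebraic subset of $\R^{n+1}$. Applying Lemma \ref{lem:consequence.Tarski--Seidenberg} with $d=1$ then yields at once that $f(S)$ is a semi-algebraic subset of $\R$, which settles the first assertion.

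For the algorithmic part I would make the reduction to projections explicit, so as to keep control of the output. Starting from a quantifier-free description $\varphi$ of $S$ as in \eqref{eq:semial.quantifier}, form the semi-algebraic set
\begin{equation}
G=\{(t,x)\in\R^{1+n}\,:\,\varphi(x)\wedge(t-f(x)=0)\}\subset\R^{1+n}\,,
\end{equation}
where $t$ is deliberately placed in the first slot. Then $f(S)$ is exactly the image of $G$ under the projection $\R^{1+n}\to\R$ forgetting the last $n$ coordinates, that is, $f(S)=\{t\in\R\,:\,\exists x_1\cdots\exists x_n(\varphi(x)\wedge t=f(x))\}$. Running a quantifier elimination / projection algorithm $n$ times --- for instance Basu's procedure from \cite[Section 3]{basu1999new} recorded in Remark \ref{rem:complex.Tarski}, or equivalently the Korkina--Kushnirenko algorithm of Remark \ref{rem:alg.consequence.Tarski--Seidenberg} --- eliminates $x_n,\dots,x_1$ in turn and leaves a quantifier-free formula in the single variable $t$; by \eqref{eq:semial.quantifier} this formula describes $f(S)$, and the whole construction uses only finitely many arithmetic operations. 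This produces the desired equations and inequalities.

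There is essentially no hard step here, which is why the statement is labelled a trivial consequence of Lemma \ref{lem:consequence.Tarski--Seidenberg} and Remark \ref{rem:alg.consequence.Tarski--Seidenberg}. The only point requiring care is the bookkeeping of coordinates: Lemma \ref{lem:Tarski--Seidenberg} eliminates the \emph{last} variable, so one must set up $G$ with $t$ in the first position (or relabel the variables at each stage) to be sure that it is $x_1,\dots,x_n$, and not $t$, that gets projected away. Everything else is a direct quotation of the cited constructive algorithms; their complexity estimates are not needed for this lemma and will enter only later, when the overall cost of the main algorithm is tracked.
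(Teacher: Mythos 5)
Your argument is correct and coincides with the paper's: the lemma is obtained exactly as a consequence of Lemma \ref{lem:consequence.Tarski--Seidenberg} (since the graph of $f$ is semi-algebraic) together with the constructive quantifier-elimination/projection algorithms of Remarks \ref{rem:alg.consequence.Tarski--Seidenberg} and \ref{rem:complex.Tarski}, and your explicit realization of $f(S)$ as a projection of $\{(x,t):t=f(x),\,x\in S\}$ is the same construction the paper records in Remark \ref{rem:complex.polyimage.semi}. The coordinate-bookkeeping point you raise is a reasonable, if minor, clarification.
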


Next, we analyze the complexity of the algorithm mentioned in Lemma \ref{lem:polyimage.semi}:
\begin{remark}\label{rem:complex.polyimage.semi}
Let $S\subset \R^n$ be a semi-algebraic set determined by $s$ polynomials of degree at most $d$.
Let $f$ be a polynomial of degree at most $d$.
It is not hard to prove that $f(S)$ is the projection of semi-algebraic set 
\begin{equation}
\{(x,x_{n+1})\in\R^{n+1}\,:\,x_{n+1}=f(x)\,,\,x\in S\}
\end{equation}
onto the space spanned by the last coordinate.
It is equivalent to eliminating a block of
quantifiers of length $n$.
Thanks to the complexity given in \cite[Theorem 1]{basu1999new}, it requires $s^{n+1}d^{O(n)}$ arithmetic operations to find the polynomials of degrees upper bounded by $d^{O(n)}$ defining $f(S)$.
\end{remark}

\subsection{Closures of semi-algebraic sets in high-dimensional spaces}
\label{sec:descrip.cl}
Set $\|x\|_2^2:=x_1^2+\dots+x_n^2$. 
We recall some properties of semi-algebraic sets in the following lemma:
\begin{lemma}\label{lem:properties.semi-algebraic.set}
The following statements hold:
\begin{enumerate}
\item With $A$ and $B$ being semi-algebraic subsets of $\R^n$, the sets
$A \cup B$, $A \cap B$, and $\R^n \backslash A$ are also semi-algebraic.
\item The closure of a semi-algebraic set is semi-algebraic.
\item A semi-algebraic subset of $\R$ is a finite union of intervals and points in $\R$.
\end{enumerate}
\end{lemma}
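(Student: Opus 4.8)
The plan is to establish the three parts in order: parts 1 and 3 by self-contained elementary arguments, and part 2 by a single appeal to Tarski--Seidenberg's theorem (Lemma \ref{lem:Tarski--Seidenberg}) through quantifier elimination.

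For part 1 I would use the characterization recorded after \eqref{eq:semial.quantifier}: a subset of $\R^n$ is semi-algebraic exactly when it is the solution set of a quantifier-free formula. If $A=\{x:\varphi(x)\}$ and $B=\{x:\psi(x)\}$ with $\varphi,\psi$ quantifier-free, then $A\cup B$, $A\cap B$, and $\R^n\setminus A$ are the solution sets of $\varphi\lor\psi$, $\varphi\wedge\psi$, and $\lnot\varphi$, which are again quantifier-free. One can also argue concretely: writing $A=\bigcup_i W_i$ and $B=\bigcup_j V_j$ as finite unions of elementary semi-algebraic sets, $A\cup B$ is already in that form; $A\cap B=\bigcup_{i,j}(W_i\cap V_j)$, where each $W_i\cap V_j$ is elementary (concatenate the lists of inequality and equality polynomials); and $\R^n\setminus A=\bigcap_i(\R^n\setminus W_i)$, where the complement of an elementary set $\{p_1>0,\dots,p_m>0,\,q_1=\dots=q_l=0\}$ is the semi-algebraic set $\bigcup_a(\{-p_a>0\}\cup\{p_a=0\})\cup\bigcup_b(\{q_b>0\}\cup\{-q_b>0\})$, after which closure under finite intersections (just proved) finishes the case.

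For part 2 I would exhibit the closure as a first-order-definable set. With $A=\{x:\varphi(x)\}$ and $\varphi$ quantifier-free, one has
\[
\overline A=\bigl\{x\in\R^n:\ \forall\varepsilon\,\bigl(\varepsilon>0\ \Rightarrow\ \exists y\,(\varphi(y)\wedge \|x-y\|_2^2<\varepsilon^2)\bigr)\bigr\}.
\]
As explained in Section \ref{sec:Preliminaries}, eliminating the existential quantifiers on the coordinates of $y$ one at a time by Lemma \ref{lem:Tarski--Seidenberg}, and then removing the universal quantifier on $\varepsilon$ via $\forall\varepsilon\,\chi\equiv\lnot\exists\varepsilon\,\lnot\chi$ together with part 1, replaces this formula by an equivalent quantifier-free one; hence $\overline A$ is semi-algebraic. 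This is the only place Tarski--Seidenberg is used, and the step I expect a careful reader to want spelled out most — in particular the fact that universal quantifiers are dispatched by combining negation (part 1) with projection.

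For part 3 I would reduce to a single elementary semi-algebraic subset $W=\{t\in\R:p_1(t)>0,\dots,p_m(t)>0,\,q_1(t)=\dots=q_l(t)=0\}$ of $\R$, since a finite union of finite unions of intervals and points is again such a union. If some $q_b$ is not the zero polynomial, then $W$ lies in the finite zero set of $q_b$, so it is a finite set of points. Otherwise every $q_b$ vanishes identically (or $l=0$); if some $p_a$ is the zero polynomial then $W=\emptyset$, and in the remaining case let $r_1<\dots<r_N$ be the real roots of the nonzero polynomials $p_1,\dots,p_m$. On each of the open intervals $(-\infty,r_1),(r_1,r_2),\dots,(r_N,\infty)$ every $p_a$ has constant nonzero sign, so $W$ is the union of those among these intervals on which all $p_a$ are positive; no point $r_k$ belongs to $W$, because $r_k$ is a root of some $p_a$, making the strict inequality $p_a>0$ fail there. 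Thus $W$ is a finite union of (possibly unbounded or empty) open intervals, and in all cases a semi-algebraic subset of $\R$ is a finite union of intervals and points, which completes the proof.
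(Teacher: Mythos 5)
Your proposal is correct and, for the one part the paper actually proves (part 2), it follows essentially the same route: express $\overline A$ by the first-order formula $\forall\varepsilon\,\exists y(\dots)$ and eliminate the quantifiers, handling the universal one by the negation--projection--negation trick together with part 1, exactly as in the paper's $\overline{S}=\R^n\setminus\pi_1(\dots\setminus\pi_2(B))$ formulation. For parts 1 and 3 the paper merely cites \cite{pham2016genericity}, whereas you supply the standard elementary arguments (Boolean combinations of quantifier-free formulas, and the sign analysis of univariate polynomials between consecutive roots); these are correct and simply fill in the cited material rather than taking a different approach.
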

\begin{proof}
The first and third statements are given in \cite[Proposition 1.1.1]{pham2016genericity} and \cite[Example 1.1.1]{pham2016genericity}, respectively. The second one is proved by Coste in \cite[Corollary 2.5]{coste2000introduction} as follows:
Let $S$ be a semi-algebraic set defined as in \eqref{eq:semial.quantifier}.
The closure of $S$ is
\begin{equation}\label{eq:cl.S.quantifier}
\overline S=\{x\in\R^n\,:\,(\forall\varepsilon)(\varepsilon>0) (\exists y) (\|x-y\|_2^2<\varepsilon^2)\wedge \varphi(y)\}\,.
\end{equation}
and can be written as
\begin{equation}
\overline{S} = \R^n \backslash (\pi_1 (\{(x, \varepsilon) \in \R^n\times \R \,,\, \varepsilon > 0\} \backslash \pi_2(B))) ,
\end{equation}
where
\begin{equation}
B =\{(x, \varepsilon, y) \in \R^n \times \R \times \R^n \,:\, (\|x-y\|_2^2<\varepsilon^2)\wedge \varphi(y)\}\,,
\end{equation}
$\pi_1(x, \varepsilon) = x$ and $\pi_2(x, \varepsilon, y)=(x, \varepsilon)$. Then observe that $B$ is semi-algebraic.
Hence $\overline{S}$ is semi-algebraic thanks to Lemma \ref{lem:Tarski--Seidenberg} and the first statement. 
\end{proof}

\begin{remark}\label{rem:basu}
In \cite{magron2015semidefinite}, Magron, Henrion, and Lasserre approximate the closure $\overline{S}$ in Lemma  \ref{lem:properties.semi-algebraic.set} using semidefinite programming.
Basu develops in \cite[Section 2.1, Example]{basu1999new} a symbolic method to compute the closure of a semi-algebraic set $S$ defined as in \eqref{eq:semial.quantifier}, where $\varphi(x)$ is a quantifier-free first order formula involving $s$ polynomials with degrees bounded by $d$.
It is based on quantifier elimination and has complexity $s^{2(n+1)}d^{O(n^2)}$.
His idea is to eliminate two blocks of
quantifiers corresponding to $\varepsilon$ and $y$ for the description of $\overline S$ in \eqref{eq:cl.S.quantifier}. 
\end{remark}

\begin{remark}
With $\varphi$ and $S$ being as in Lemma \ref{lem:Tarski--Seidenberg}, the closure $\overline{\varphi(S)}$ is semi-algebraic thanks to the second statement of Lemma \ref{lem:properties.semi-algebraic.set}.
By the third statement of Lemma \ref{lem:properties.semi-algebraic.set}, the image $f(S)$ in Lemma \ref{lem:polyimage.semi} is a finite union of points and  intervals of $\R$.
It implies that the closure $\overline{f(S)}$ is semi-algebraic and therefore is a finite union of closed intervals and points of $\R$. 
\end{remark}
\if{
\begin{lemma}
Let $S$ be a semi-algebraic set of $\R^{k+n}$ defined by s polynomials in $k+n$ variables such that the sum of their degrees is less than or equal to $D$.
There is an algorithm of sequential complexity $D^{(k+n)^{\mathcal{O}(1)}}$ and parallel complexity $((k + n)\log D)^{\mathcal{O}(1)}$ constructing:
\begin{enumerate}
\item a partition of $\R^k$ into semi-algebraic sets $T_m$;
\item for each $m$ a finite family $(\xi_{m,j})_{j=1,\dots,l_m}$, of continuous semi-algebraic functions from $T_m$ to $\R^n$ such that the graph of $\xi_{m,j}$ is contained in $S$ and such that for all $y\in T_m$ the graphs $\xi_{m,j}$ intersect each (semi-algebraically) connected component of $S \cap (\{y\}\times\R^n)$.
\end{enumerate}
\end{lemma}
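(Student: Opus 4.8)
The plan is to prove this as an effective, parametrised version of Hardt's semi-algebraic triviality theorem, carried out through the single-exponential form of the Tarski--Seidenberg principle rather than through a cylindrical decomposition. As a preliminary normalisation I would first reduce to the case in which every connected component of every fibre $S_y:=S\cap(\{y\}\times\R^n)$ is bounded: replace $S$ by its intersection with a ball $\{(y,x):\|x\|_2^2\le R^2\}$ where $R$ is a (possibly infinitesimally) large radius, construct the sections for the truncated set, and recover a component of the original fibre that escapes to infinity from a bounded component of the truncated one by the standard limiting argument, which costs only a controlled increase in degrees. It then suffices to treat the bounded case.

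The heart of the matter is the one-variable case $n=1$, with $S$ defined by $f_1,\dots,f_s\in\R[y,x_1]$. Here I would compute the signed subresultant (Sturm--Habicht) sequences of the $f_i$ and of all pairs $f_i,f_j$, together with every leading coefficient; these are finitely many polynomials $a_\ell\in\R[y]$, and the realisable sign vectors of the $a_\ell$ partition $\R^k$ into semi-algebraic strata $T_m$ over which the number of real roots of each $f_i(y,\cdot)$, the order type of the union of all these roots, and the sign of every $f_j$ on each open interval they cut out are all constant. On such a $T_m$ every real root of every $f_i$ is a continuous semi-algebraic branch $y\mapsto\rho(y)$ that never collides with the others, so midpoints of consecutive roots and the shifts $\rho_{\min}(y)-1$ and $\rho_{\max}(y)+1$ are again continuous semi-algebraic; for each sign condition $\sigma$ that is realised and constant over $T_m$, selecting one such representative per interval or root-point realising $\sigma$ gives the family $(\xi_{m,j})$, and by construction their graphs lie in $S$ and meet every connected component of $S_y$.

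For general $n$, the naive recursion --- apply the $n-1$ statement over the enlarged parameter $(y,x_1)\in\R^{k+1}$, project the resulting strata and section graphs down to $\R^k$, and re-run the one-variable analysis in $x_1$ after adjoining the section functions to the $f_i$ --- does produce the required partition and, by composition, continuous semi-algebraic sections $\xi_{m,j}:T_m\to\R^n$, with the connectivity property propagating because a connected component of $S_y$ maps onto a union of components of the $(x_2,\dots,x_n)$-fibres over a component of the $x_1$-fibre, each already met. However, that recursion is exactly a cylindrical algebraic decomposition and is doubly exponential. To reach the stated single-exponential bound I would instead use the parametrised critical-point method: over the base, take the critical points of the squared distance to a generic point, restricted to each locally closed sign-condition stratum of $S_y$ (with a further recursion over the boundary strata), which form a semi-algebraic family $\Gamma\subset\R^k\times\R^n$ given by zero-dimensional parametric polynomial systems; projecting $\Gamma$ and the associated resultants and discriminants to $\R^k$ and taking the realisable sign vectors of the resulting auxiliary polynomials yields strata $T_m$ over which the number of such critical points is constant and each is a continuous semi-algebraic function of $y$, while boundedness of the fibres guarantees that the distance attains a local extremum, hence a critical point, on every connected component. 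Tracking degrees and numbers of polynomials through the $O(n)$ rounds of subresultant computation, Lagrange-system formation, elimination, sign-vector enumeration, and branch isolation --- each of cost $D^{(k+n)^{O(1)}}$ sequentially and $((k+n)\log D)^{O(1)}$ in parallel by the quantitative Tarski--Seidenberg analysis of \cite{heintz1990complexite} --- keeps all output degrees within the same single-exponential range.

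The main obstacle is the simultaneous demand of three things: (i) continuity of the sections, which forces the stratification of the base to be fine enough that connected components of fibres never merge or split within a stratum; (ii) exhaustiveness over the connected components of every fibre, including lower-dimensional pieces, which is what forces the recursion over the boundary strata of $S$ and the use of local extrema rather than just global minima of a linear form; and (iii) single-exponential cost. Reconciling (i) and (ii) alone is routine with a cylindrical algebraic decomposition, but that violates (iii), and reconciling all three is precisely the technical core of the argument; I would handle it via the critical-point/roadmap machinery underlying the effective Tarski--Seidenberg theorem rather than attempt a self-contained cylindrical construction.
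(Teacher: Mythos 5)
There is nothing in the paper to compare your argument against: the lemma you were asked to prove sits inside an \verb|\if{...}\fi| block in the source, is excised from the compiled text, and carries no proof --- it is a verbatim import of a theorem of Heintz--Roy--Solern\'o \cite{heintz1990complexite} (in the form also found in Coste's notes \cite{coste2000introduction}), which the paper would have used as a black box. So your proposal must be judged on its own terms. On those terms the strategy is the right one and matches how the result is actually established in the literature: reduce to bounded fibres, settle $n=1$ by stratifying the base according to the realizable sign conditions on the principal (signed) subresultant coefficients so that roots vary continuously without collision, and for general $n$ replace the naive cylindrical recursion --- which, as you correctly observe, is doubly exponential and would destroy the bound $D^{(k+n)^{\mathcal{O}(1)}}$ --- by the parametrized critical-point method applied to zero-dimensional Lagrange systems, with the base stratified by sign conditions on the eliminated auxiliary polynomials.

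What keeps this from being a proof rather than a plan is precisely the step you flag as the ``technical core'' and then defer. The critical points of the distance function meet every connected component only when that component is compact, and the realizable sign-condition strata of $S_y$ are in general neither closed nor bounded; the standard repair is to pass to the closed sets $\{p_i=\varepsilon\}$, $\{p_i\ge\varepsilon\}$ over a real closed extension with infinitesimals $\varepsilon$ (and a second infinitesimal $1/R$ for the truncating ball), produce the sections there, and then take limits as the infinitesimals go to $0$ while verifying that the limit sections are still continuous, still land in $S$, and still meet every component of the original fibre --- including components that are entirely ``at infinity'' relative to the truncation. None of this is carried out, and it is exactly where continuity, exhaustiveness over components, and the degree bounds interact. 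Similarly, the claim that sign conditions on the resultants and discriminants of the parametric critical-point systems suffice to make each critical point a \emph{continuous} function of $y$ needs Thom encodings (sign conditions on derivatives) rather than mere constancy of the number of real roots. So: correct architecture, genuinely incomplete execution at the deformation/limit stage.
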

}\fi

\if{
Let $B(a,r)$ stand for the open ball in $\R^n$ centered at $a\in\R^n$ with radius $r>0$.
The following proposition says that the closure of $S$ is an infinite intersection of open sets containing $S$:
\begin{proposition}\label{prop:cl}
Let $S$ defined as in \eqref{eq:semial.quantifier} with $\varphi(x)$ being a quantifier-free first order formula.
Then
\begin{equation}
\begin{array}{rl}
\overline S&=\{y\in\R^n\,:\,(\forall\varepsilon >0)\,,\, (\exists x) (\|x-y\|_2^2<\varepsilon^2)\wedge \varphi(x)\}\\[5pt]
&=\bigcap\limits_{\varepsilon>0}\{y\in \R^n\,:\,(\exists x) (\|x-y\|_2^2<\varepsilon^2)\wedge \varphi(x)\}\,.
\end{array}
\end{equation}
\end{proposition}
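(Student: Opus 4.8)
This is a purely topological statement about the metric space $(\R^n,\|\cdot\|_2)$, and semi-algebraicity of $S$ plays no role in it (it matters only for the companion fact that $\overline S$ is again semi-algebraic, which is Lemma \ref{lem:properties.semi-algebraic.set}). Observe first that the leftmost of the two displayed descriptions of $\overline S$ is precisely the formula \eqref{eq:cl.S.quantifier} already used in the proof of Lemma \ref{lem:properties.semi-algebraic.set}, so the content to establish is (i) a self-contained justification of that description of $\overline S$ and (ii) the rewriting of the outer quantifier $\forall\varepsilon$ as an intersection. The plan is to invoke the standard characterization of closure in a metric space: $y\in\overline S$ if and only if every open neighborhood of $y$ meets $S$; and since the open balls $B(y,\varepsilon):=\{x\in\R^n:\|x-y\|_2<\varepsilon\}$, $\varepsilon>0$, form a neighborhood base at $y$, this holds if and only if $B(y,\varepsilon)\cap S\neq\emptyset$ for every $\varepsilon>0$. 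As $\|x-y\|_2<\varepsilon$ is equivalent to $\|x-y\|_2^2<\varepsilon^2$ for $\varepsilon>0$, and the statement $x\in S$ is by definition $\varphi(x)$, this last condition reads verbatim as $(\forall\varepsilon>0)(\exists x)(\|x-y\|_2^2<\varepsilon^2)\wedge\varphi(x)$, which is the first set in the claim.

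To spell out the two inclusions I would argue as follows. If $y\in\overline S$ and $\varepsilon>0$, then the open neighborhood $B(y,\varepsilon)$ of $y$ must intersect $S$, so there is $x$ with $\|x-y\|_2^2<\varepsilon^2$ and $\varphi(x)$; hence $y$ belongs to the first set. Conversely, if for every $\varepsilon>0$ there is $x_\varepsilon\in S$ with $\|x_\varepsilon-y\|_2<\varepsilon$, then the sequence $(x_{1/k})_{k\ge1}$ lies in $S$ and converges to $y$, whence $y\in\overline S$ (equivalently: any open neighborhood $U$ of $y$ contains some ball $B(y,\varepsilon)$, and then $x_\varepsilon\in U\cap S$). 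This gives the first equality.

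The second equality is then purely formal: writing $A_\varepsilon:=\{y\in\R^n:(\exists x)(\|x-y\|_2^2<\varepsilon^2)\wedge\varphi(x)\}$, a point $y$ lies in $\bigcap_{\varepsilon>0}A_\varepsilon$ if and only if $y\in A_\varepsilon$ for all $\varepsilon>0$, which is exactly the $\forall\varepsilon$ statement defining the first set. I would also record the remark that $A_\varepsilon=\bigcup_{x\in S}B(x,\varepsilon)\supseteq S$ is open, which is what makes the informal reading ``$\overline S$ is an intersection of open sets containing $S$'' literally true --- although openness of the $A_\varepsilon$ is not needed anywhere in the proof. There is essentially no obstacle here: the only step deserving a word of care is the reduction from arbitrary neighborhoods of $y$ to the countable family $\{B(y,1/k)\}_{k\ge1}$, that is, the fact that the Euclidean norm induces the usual topology on $\R^n$; everything else is a couple of lines.
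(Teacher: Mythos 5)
Your proof is correct and takes essentially the same approach as the paper's own (very terse) proof, which likewise just invokes the characterization that $y\in\overline S$ if and only if $B(y,\varepsilon)\cap S\ne\emptyset$ for every $\varepsilon>0$ and declares the result immediate. Your version merely spells out the two inclusions, the harmless passage from $\|x-y\|_2<\varepsilon$ to $\|x-y\|_2^2<\varepsilon^2$, and the formal rewriting of the $\forall\varepsilon$ quantifier as an intersection, all of which the paper leaves implicit.
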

\begin{proof}
Set $A=\{y\in\R^n\,:\,(\forall\varepsilon >0)\,,\, (\exists x) (\|x-y\|_2^2<\varepsilon^2)\wedge \varphi(x)\}$.
By definition, $y\in \overline{S}$ if and only if for every $\varepsilon>0$, $B(y,\varepsilon)\cap S\ne \emptyset$.
It implies $\overline S=A$, yielding the result.
\end{proof}
}\fi

\subsection{Fundamental theorem of algebra and greatest common divisor}
Let $\R[t]$ be the ring of polynomials in single-variable $t$.
Let $\deg(p)$ stand for the degree of a polynomial $p\in\R[t]$.

We restate the fundamental theorem of algebra in the following lemma:
\begin{lemma}\label{lem:factor.poly}
Every polynomial $p\in\R[t]$ of positive degree can be decomposed uniquely as:
\begin{equation}\label{eq:factor.poly}
p=c\times\prod\limits_{i=1}^{s} (t-a_i)^{m_i}\,,
\end{equation}
where $c\in\R$, $a_i\in \C$, and $s,m_i\in\N$ such that $c\ne 0$, $m_i>0$ and $a_1,\dots,a_s$ are disjoint.
Moreover, for every $i\in\{1,\dots,s\}$ satisfying $a_i\in\C\backslash \R$, there is $j\in \{1,\dots,s\}\backslash \{i\}$ such that $m_j=m_i$, and $a_j$ is the conjugate of $a_i$.
\end{lemma}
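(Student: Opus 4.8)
The plan is to derive the statement from the fundamental theorem of algebra over $\C$ together with the elementary fact that $\C[t]$ is a unique factorization domain whose irreducible monic elements are exactly the linear polynomials $t-a$, $a\in\C$. First I would establish existence of the factorization by induction on $\deg(p)$: when $\deg(p)\ge 1$, the fundamental theorem of algebra provides a root $a\in\C$; polynomial division then gives $p=(t-a)q$ with $\deg(q)=\deg(p)-1$, and the inductive hypothesis applies to $q$. Grouping equal roots into powers and factoring out the leading coefficient $c$ yields the form \eqref{eq:factor.poly}; since $p$ has real coefficients, its leading coefficient $c$ is real and nonzero.

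For uniqueness, I would invoke that $\C[t]$ is a UFD: two factorizations of $p$ into a constant times monic linear factors must coincide up to reordering, which forces the multiset $\{(a_i,m_i)\}$ of roots-with-multiplicities and the constant $c$ to be uniquely determined. One can also argue directly, comparing the order of vanishing of $p$ at each $a_i$ to pin down the $m_i$ and matching leading coefficients to pin down $c$.

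The remaining conjugate-pairs assertion follows from the identity $\overline{p(z)}=p(\overline z)$, valid for all $z\in\C$ whenever $p\in\R[t]$, and more generally $\overline{p^{(j)}(z)}=p^{(j)}(\overline z)$ because each derivative $p^{(j)}$ again has real coefficients. Consequently $z$ is a root of $p$ of multiplicity exactly $m$ if and only if $p(z)=p'(z)=\dots=p^{(m-1)}(z)=0\neq p^{(m)}(z)$, a condition invariant under $z\mapsto\overline z$. Hence if $a_i\in\C\setminus\R$ occurs with multiplicity $m_i$, then $\overline{a_i}$ occurs among the $a_j$ with the same multiplicity, and $\overline{a_i}\neq a_i$ precisely because $a_i$ is non-real, giving the claimed $j\in\{1,\dots,s\}\setminus\{i\}$.

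I do not anticipate any genuine obstacle, as the result is classical; the only place needing a short but real argument is the equality of multiplicities within a conjugate pair, which the derivative criterion above settles. The fundamental theorem of algebra itself I would cite rather than reprove.
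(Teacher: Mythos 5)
Your proof is correct. Note, however, that the paper offers no proof of this lemma at all: it is explicitly presented as a restatement of the fundamental theorem of algebra and treated as a classical fact, so there is no argument in the paper to compare yours against. Your three-part structure (existence by induction on the degree using one application of the fundamental theorem of algebra per step, uniqueness from $\C[t]$ being a UFD with linear monic irreducibles, and the conjugate-pair claim from $\overline{p(z)}=p(\overline z)$ together with the derivative criterion for multiplicity) is a standard and complete way to establish it. One small simplification worth mentioning: for the conjugate-pair assertion you can avoid the derivative criterion entirely by applying complex conjugation to the factorization itself, obtaining $p=\overline p=c\prod_{i=1}^{s}(t-\overline{a_i})^{m_i}$, and then invoking the uniqueness you have already proved to conclude that the multiset $\{(\overline{a_i},m_i)\}$ coincides with $\{(a_i,m_i)\}$; this immediately pairs each non-real $a_i$ with some $a_j=\overline{a_i}$ of equal multiplicity. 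Either route is fine.
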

We call $a_i$ in Lemma \ref{eq:factor.poly} a (complex) root of $p$ with multiplicity $m_i$.
If $m_i>1$ (resp. $m_i=1$), $a_i$ is call multiple (resp. simple) root of $p$.

Let $\gcd(p_1,\dots,p_m)$ stand for the greatest common divisor of polynomials $p_1,\dots,p_m$ in $\R[t]$.
To find $\gcd(p_1,\dots,p_m)$, we use the well-known polynomial Euclidean algorithm (see, e.g., Basu--Pollack--Roy's book \cite{basu2003algorithms}).
\begin{remark}\label{rem:complex.gcd}
Belhaj--Kahla's algorithm in \cite{belhaj2013complexity} to compute $\gcd(p_1,p_2)$ via Hankel matrices has a cost of $O(d^2)$ arithmetic operations, where $d=\max\{\deg(p_1),\deg(p_2)\}$.
Since $\gcd(p_1, p_2, \dots , p_m) = \gcd( p_1, \gcd(p_2, \dots , p_m))$, we obtain the number of arithmetic operations to compute $\gcd(p_1,\dots , p_m)$ as $(m-1)\times O(d^2)$, where $d=\max\limits_{j=1,\dots,m}\deg(p_j)$.
\end{remark}

Denote by $p'$ the derivative of a polynomial $p\in\R[t]$.
The following two lemmas present the basic properties of the roots of a polynomial and its derivative:
\begin{lemma}\label{lem:mul.root}
Let $a$ be a root of a polynomial $p\in\R[t]$.
Then $a$ is a multiple root of $p$ if and only if $a$ is also a root of $p'$.
\end{lemma}
\begin{proof}
Since $a$ is a root of $p$, there exists a polynomial $q$ such that $p=(t - a)q$.
Using the product rule of derivatives, we know that
$p' = q + (t-a)q'$.
But then $(t-a)$ only divides $p'$ if and only if it also divides $q$.
\end{proof}
\begin{lemma}\label{lem:simp.root}
If a polynomial $p\in\R[t]$ has only simple roots.
Then $p,p'$ are relatively prime, i.e., $\gcd(p,p')=1$. 
\end{lemma}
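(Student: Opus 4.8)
The plan is to argue by contradiction, using the fundamental theorem of algebra (Lemma \ref{lem:factor.poly}) together with the characterization of multiple roots in Lemma \ref{lem:mul.root}. First I would dispose of the trivial case: if $p$ is a nonzero constant it has no roots, so the hypothesis is vacuous and $\gcd(p,p')=\gcd(p,0)$ is a nonzero constant, hence equal to $1$ up to normalization; so from now on assume $\deg(p)\ge 1$.

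Next, suppose for contradiction that $g:=\gcd(p,p')$ is not a nonzero constant, i.e. $\deg(g)\ge 1$. By Lemma \ref{lem:factor.poly} applied to $g$, the polynomial $g$ has a root $a\in\C$. Since $g$ divides both $p$ and $p'$, the number $a$ is a common root of $p$ and $p'$. Then Lemma \ref{lem:mul.root} (which applies verbatim over $\C$, since its proof is purely the product rule together with divisibility in $\C[t]$) shows that $a$ is a multiple root of $p$. This contradicts the assumption that every root of $p$ is simple.

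Therefore $\deg(g)=0$, so $g$ is a nonzero constant, which by the usual normalization of the greatest common divisor means $\gcd(p,p')=1$, as claimed. I do not expect any genuine obstacle here: the only points requiring a word of care are the degenerate case $\deg(p)=0$ and the fact that Lemma \ref{lem:mul.root} is being used for a complex root $a$ rather than a real one, both of which are immediate.
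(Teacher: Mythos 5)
Your proof is correct and takes essentially the same route as the paper: both argue by contradiction, extract from a non-constant $\gcd(p,p')$ a common (possibly complex) linear factor $t-a$ of $p$ and $p'$, and invoke Lemma \ref{lem:mul.root} to conclude that $a$ is a multiple root of $p$, a contradiction. Your extra attention to the constant case and to the fact that Lemma \ref{lem:mul.root} is applied over $\C$ is a minor refinement that does not change the argument.
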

\begin{proof}
Assume that $p,p'$ are not relatively prime.
Then, they have a common irreducible factor of degree $1$.
Since by definition, two polynomials are relatively prime if their only common factor is of degree $0$.
Then there exists a polynomial of the form $t-a$ that divides both $p$ and $p'$.
Then from Lemma \ref{lem:mul.root}, $a$ is a multiple root of $p$.
But this is impossible.
\end{proof}

We recall the radical of a polynomial in the following lemma:
\begin{lemma}\label{lem:radical}
Let $p$ be polynomial in $\R[t]$ with decomposition \eqref{eq:factor.poly}.
Then
\begin{equation}\label{eq:radical1}
\arraycolsep=1.4pt\def\arraystretch{.5}
\gcd(p,p')=\prod_{\begin{array}{cc}
\scriptstyle i=1 \\
 \scriptstyle  m_i>1
\end{array}}^{s} (t-a_i)^{m_i-1}\,,
\end{equation}
\begin{equation}\label{eq:radical2}
\frac{p}{\gcd(p,p')}=\prod_{i=1}^{s} (t-a_i)
\end{equation}
are polynomials in $\R[t]$.
\end{lemma}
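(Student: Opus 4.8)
The plan is to compute $p'$ explicitly from the factorization \eqref{eq:factor.poly} and read off the multiplicity of each $a_i$ in $\gcd(p,p')$. First I would fix an index $i$ with $m_i>1$ and write $p=(t-a_i)^{m_i}r$ with $r\in\C[t]$ and $r(a_i)\neq 0$ (collecting the remaining factors into $r$). The product rule gives $p'=(t-a_i)^{m_i-1}\bigl(m_i r+(t-a_i)r'\bigr)$, and evaluating the second factor at $a_i$ yields $m_i r(a_i)\neq 0$; hence $a_i$ is a root of $p'$ of multiplicity exactly $m_i-1$. If instead $m_i=1$, then $a_i$ is a simple root of $p$, so by Lemma \ref{lem:mul.root} it is not a root of $p'$ at all. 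Again by Lemma \ref{lem:mul.root}, the common roots of $p$ and $p'$ are precisely the $a_i$ with $m_i>1$, and since the multiplicity of a root in $\gcd(p,p')$ equals the minimum of its multiplicities in $p$ and in $p'$, that multiplicity is $\min(m_i,m_i-1)=m_i-1$. This pins down $\gcd(p,p')$ up to a nonzero scalar; to fix the normalization I would observe that $g:=\prod_{i:\,m_i>1}(t-a_i)^{m_i-1}$ divides both $p$ and $p'$ by the computation above, hence $g\mid\gcd(p,p')$, while the degree count just made shows $\deg\gcd(p,p')=\deg g$, which gives the equality \eqref{eq:radical1}.

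Then \eqref{eq:radical2} is immediate: dividing the factorization \eqref{eq:factor.poly} by \eqref{eq:radical1} leaves exactly $c\prod_{i=1}^s(t-a_i)$, and (with the $\gcd$ normalized so as to absorb the leading constant) we obtain $\prod_{i=1}^s(t-a_i)$.

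It remains to argue that the expressions in \eqref{eq:radical1} and \eqref{eq:radical2}, a priori written in terms of the possibly complex $a_i$, actually lie in $\R[t]$. Here I would invoke the conjugation part of Lemma \ref{lem:factor.poly}: the non-real roots of $p$ come in conjugate pairs $a_i,a_j$ with equal multiplicities $m_i=m_j$, so both the index set $\{i:m_i>1\}$ and the full index set $\{1,\dots,s\}$ are stable under conjugation, which makes the products $\prod_{i:\,m_i>1}(t-a_i)^{m_i-1}$ and $\prod_{i=1}^s(t-a_i)$ invariant under complex conjugation of their coefficients and hence real. Alternatively, and perhaps more cleanly for the write-up, one notes that the Euclidean algorithm applied to $p,p'\in\R[t]$ never leaves $\R[t]$, so $\gcd(p,p')\in\R[t]$ by construction, and then $p/\gcd(p,p')$ is the exact quotient of two elements of $\R[t]$ and therefore also lies in $\R[t]$.

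I expect the only delicate point to be the bookkeeping around the fact that $\gcd$ is defined only up to a nonzero scalar, so that the two displayed identities must be read with a fixed normalization of $\gcd(p,p')$; the genuine content — the multiplicity count via the derivative and the reality of the two factors — is routine once $p'$ is written out.
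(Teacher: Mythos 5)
Your proof is correct and follows essentially the same route as the paper's: both differentiate the factored form of $p$, observe that each root of multiplicity $m_i$ contributes multiplicity exactly $m_i-1$ to $p'$ (and simple roots contribute nothing), and obtain reality from the conjugate-pair statement of Lemma \ref{lem:factor.poly}. The only difference is organizational --- the paper factors $w=\prod_{m_i>1}(t-a_i)^{m_i-1}$ out of $p'$ globally and concludes via $\gcd(h,h')=1$ from Lemma \ref{lem:simp.root}, whereas you count multiplicities root by root and close with a degree comparison; your explicit remarks on the scalar normalization of the $\gcd$ and on the Euclidean algorithm staying in $\R[t]$ are, if anything, slightly more careful than the original.
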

\begin{proof}
Set  
\begin{equation}
\arraycolsep=1.4pt\def\arraystretch{.5}
h=\prod_{i=1}^{s} (t-a_i)\qquad\text{and}\qquad w=\prod_{\begin{array}{cc}
\scriptstyle i=1 \\
 \scriptstyle  m_i>1
\end{array}}^{s} (t-a_i)^{m_i-1}
\end{equation}
 Then $p=wh$ and
\begin{equation}
\arraycolsep=1.4pt\def\arraystretch{.5}
h'=\sum_{j=1}^s \prod_{\begin{array}{cc}
\scriptstyle i=1 \\
 \scriptstyle  i\ne j
\end{array}}^{s} (t-a_i)\,.
\end{equation}
Lemma \ref{lem:simp.root} says that $\gcd(h,h')=1$ since $h$ has only simple roots. 
Simple computation gives
\begin{equation}
\arraycolsep=1.4pt\def\arraystretch{.5}
p'=\sum_{j=1}^s m_j(t-a_j)^{m_j-1}\prod_{\begin{array}{cc}
\scriptstyle i=1 \\
 \scriptstyle  i\ne j
\end{array}}^{s} (t-a_i)^{m_i}=wh'\,.
\end{equation}
From this and $\gcd(h,h')=1$, we obtain
$\gcd(p,p')=w\gcd(h,h')=w$,
which yields \eqref{eq:radical1} and \eqref{eq:radical2}.
In addition, $w$ and $h$ are polynomials thanks to the second statement of Lemma \ref{lem:factor.poly}.
Hence the result follows.
\end{proof}

Lemma \ref{lem:radical} says that we can symbolically compute the product of linear factors of a polynomial.
The following lemma shows how to get the product of linear factors of a polynomial occurring once:
\begin{lemma}\label{lem:mul.one}
Let $p$ be polynomial in $\R[t]$ with decomposition \eqref{eq:factor.poly}.
Set $q:=\gcd(p,p')$.
Then
\begin{equation}\label{eq:prod.mul.one}
\arraycolsep=1.4pt\def\arraystretch{.5}
\frac{p\times\gcd(q,q')}{q^2}=\prod_{\begin{array}{cc}
\scriptstyle i=1 \\
 \scriptstyle  m_i=1
\end{array}}^{s} (t-a_i)
\end{equation}
is a polynomial in $\R[t]$.
\end{lemma}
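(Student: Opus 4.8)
The plan is to invoke Lemma~\ref{lem:radical} twice. A first application, to $p$ itself, identifies $q=\gcd(p,p')$ explicitly in terms of the decomposition \eqref{eq:factor.poly}; a second application, to $q$ in place of $p$, does the same for $\gcd(q,q')$. Once both are in closed form, the identity \eqref{eq:prod.mul.one} reduces to a bookkeeping of the exponent of each linear factor $(t-a_i)$ on the two sides.

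Concretely, I would first write, by \eqref{eq:radical1},
\[
q=\gcd(p,p')=\prod_{i\,:\,m_i>1}(t-a_i)^{m_i-1},
\]
which is a polynomial in $\R[t]$ by Lemma~\ref{lem:radical}. Reading off its decomposition, the distinct complex roots of $q$ are precisely the $a_i$ with $m_i>1$, each with multiplicity $m_i-1$. Provided some $m_i>1$, so that $q$ has positive degree, applying \eqref{eq:radical1} to $q$ then yields
\[
\gcd(q,q')=\prod_{i\,:\,m_i>2}(t-a_i)^{m_i-2}.
\]

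Next I would compare exponents of $(t-a_i)$ on both sides of \eqref{eq:prod.mul.one}. In the numerator $p\cdot\gcd(q,q')$ the exponent of $(t-a_i)$ equals $m_i$ when $m_i\le 2$ and $m_i+(m_i-2)=2m_i-2$ when $m_i\ge 3$, while in $q^2$ it equals $0$ when $m_i=1$ and $2(m_i-1)$ when $m_i>1$. Subtracting, the exponent of $(t-a_i)$ in $p\cdot\gcd(q,q')/q^2$ is $1$ if $m_i=1$ and $0$ if $m_i\ge 2$; in particular every exponent is non-negative, so the quotient is a genuine polynomial, and it equals $c\prod_{i\,:\,m_i=1}(t-a_i)$. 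Since a non-real root $a_i$ with $m_i=1$ has, by Lemma~\ref{lem:factor.poly}, a conjugate root of the same multiplicity $1$, the index set $\{i:m_i=1\}$ is stable under conjugation, so this product lies in $\R[t]$, which is \eqref{eq:prod.mul.one} (the nonzero leading scalar being harmless, exactly as for \eqref{eq:radical2} in Lemma~\ref{lem:radical}).

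It remains to treat the degenerate case in which no $m_i$ exceeds $1$: then $p$ has only simple roots, $q=\gcd(p,p')$ is a nonzero constant by Lemma~\ref{lem:simp.root}, $q'=0$, and $\gcd(q,q')$ is again a nonzero constant, so $p\cdot\gcd(q,q')/q^2$ is a nonzero scalar multiple of $p=c\prod_{i=1}^{s}(t-a_i)=c\prod_{i\,:\,m_i=1}(t-a_i)$, and \eqref{eq:prod.mul.one} holds. The only care needed is the bookkeeping: checking that $q$ has positive degree before applying Lemma~\ref{lem:radical} to it, and correctly tracking which roots $a_i$ persist (and with what multiplicity) in $q$, $q^2$, and $\gcd(q,q')$; the exponent count itself is then immediate.
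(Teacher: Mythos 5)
Your proposal is correct and follows essentially the same route as the paper: both proofs hinge on applying Lemma~\ref{lem:radical} to $q=\gcd(p,p')$ and then cancelling the resulting products of linear factors (the paper divides $p/\gcd(p,p')$ by $q/\gcd(q,q')$, while you track the exponent of each $(t-a_i)$ directly, which amounts to the same computation). Your explicit treatment of the degenerate case where $q$ is constant is a careful touch the paper omits, but it does not change the substance of the argument.
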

\begin{proof}
Applying Lemma \ref{lem:radical}, we get 
\begin{equation}
\arraycolsep=1.4pt\def\arraystretch{.5}
\frac{q}{\gcd(q,q')}=\prod_{\begin{array}{cc}
\scriptstyle i=1 \\
 \scriptstyle  m_i>1
\end{array}}^{s} (t-a_i)\,.
\end{equation}
It implies that
\begin{equation}
\arraycolsep=1.4pt\def\arraystretch{.5}
\frac{p}{\gcd(p,p')}\times \frac{\gcd(q,q')}{q}=\prod_{i=1}^{s} (t-a_i)\div \prod_{\begin{array}{cc}
\scriptstyle j=1 \\
 \scriptstyle  m_j>1
\end{array}}^{s} (t-a_j)=\prod_{\begin{array}{cc}
\scriptstyle i=1 \\
 \scriptstyle  m_i=1
\end{array}}^{s} (t-a_i)\,.
\end{equation}
Hence the result follows.
\end{proof}

\subsection{Closures of semi-algebraic sets in one-dimensional spaces}
The following lemma describes the properties of the closures of the intersection and union of several sets:
\begin{lemma}\label{lem:intersec.closure}
Let $A_1,\dots,A_m$ be a finite collection of subsets of $\R^n$.
Then the following two statements hold: 
\begin{enumerate}
\item $\overline{\bigcap_{i=1}^m A_i}\subset \bigcap_{i=1}^m \overline{A_i}$. 
\item $\overline{\bigcup_{i=1}^m A_i}= \bigcup_{i=1}^m \overline{A_i}$.
\end{enumerate}
\end{lemma}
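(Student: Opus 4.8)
The plan is to derive everything from two elementary facts about the closure operator on $\R^n$: first, monotonicity, i.e., $A\subset B$ implies $\overline A\subset \overline B$ (immediate from the fact that $\overline A$ is the smallest closed set containing $A$); and second, that a finite union of closed sets is closed, together with the dual characterization that $\overline C$ is the intersection of all closed sets containing $C$.

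For the first statement, I would observe that $\bigcap_{i=1}^m A_i\subset A_j$ for every $j\in\{1,\dots,m\}$. Applying monotonicity of the closure to each inclusion yields $\overline{\bigcap_{i=1}^m A_i}\subset \overline{A_j}$ for every $j$, and intersecting over $j$ gives $\overline{\bigcap_{i=1}^m A_i}\subset \bigcap_{j=1}^m \overline{A_j}$, as claimed. (No finiteness is needed here, but I will not bother to state the more general version since the lemma only asks for finitely many sets.)

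For the second statement I would prove the two inclusions separately. The inclusion $\bigcup_{i=1}^m \overline{A_i}\subset \overline{\bigcup_{i=1}^m A_i}$ follows exactly as above: $A_j\subset \bigcup_{i=1}^m A_i$ gives $\overline{A_j}\subset \overline{\bigcup_{i=1}^m A_i}$ for each $j$, and taking the union over $j$ finishes it. For the reverse inclusion $\overline{\bigcup_{i=1}^m A_i}\subset \bigcup_{i=1}^m \overline{A_i}$, I would note that $\bigcup_{i=1}^m \overline{A_i}$ is a \emph{finite} union of closed sets, hence closed, and it contains $\bigcup_{i=1}^m A_i$; since $\overline{\bigcup_{i=1}^m A_i}$ is by definition the smallest closed set containing $\bigcup_{i=1}^m A_i$, it is contained in $\bigcup_{i=1}^m \overline{A_i}$. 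Combining the two inclusions gives the equality.

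There is essentially no obstacle here: the only point that genuinely uses a hypothesis is the reverse inclusion in the second statement, where the finiteness of the collection is what guarantees that $\bigcup_{i=1}^m \overline{A_i}$ is closed (an infinite union of closed sets can fail to be closed, so the equality would degrade to ``$\supset$'' in general). I would perhaps add a one-line remark to that effect, but otherwise the proof is a direct application of the two stated facts about the closure operator.
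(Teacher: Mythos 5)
Your proof is correct and, for the first statement, is word-for-word the paper's argument (monotonicity applied to $\bigcap_{i=1}^m A_i\subset A_j$ and intersecting over $j$). The paper dismisses the second statement as ``not hard to prove''; your two-inclusion argument, with the finiteness remark explaining why a finite union of closed sets is closed, is exactly the standard filling-in of that gap.
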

\begin{proof}
It is not hard to prove the second statement. 
Let us prove the first one.
For $j=1,\dots,m$, we get $\bigcap_{i=1}^m A_i\subset A_j$, which gives $\overline{\bigcap_{i=1}^m A_i}\subset \overline{A_j}$.
Hence the result follows.
\end{proof}

We say that a polynomial $p\in\R[t]$ changes sign at its real root $a$, if 
\begin{equation}
\lim\limits_{t\to a^+}\frac{1}{p(t)}\ne \lim\limits_{t\to a^-}\frac{1}{p(t)}\,.
\end{equation}
Note that these limits have only values $\pm\infty$.
The following lemma shows that the sign change of a polynomial depends on the multiplicities of real roots:
\begin{lemma}\label{lem:chage.sign}
Let $p$ be a polynomial in $\R[t]$ of positive degree.
Then $p$ changes (resp. does not change) sign at its real roots with odd (resp. even) multiplicities.
\end{lemma}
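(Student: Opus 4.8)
The plan is to reduce everything to the local behavior of $p$ near a single real root, using the factorization from Lemma \ref{lem:factor.poly}. Fix a real root $a$ of $p$ with multiplicity $m$. Collecting the factor $(t-a)^m$ and putting all the remaining (complex-conjugate-paired and other real) factors together, Lemma \ref{lem:factor.poly} lets us write $p = (t-a)^m q$ with $q\in\R[t]$ and $q(a)\ne 0$; note that the pairing of conjugate roots in that lemma guarantees $q$ has real coefficients. Since $a$ is one of finitely many roots, it is isolated among the real roots of $p$, so we may choose $\delta>0$ with $p(t)\ne 0$ for all $t$ with $0<|t-a|<\delta$.

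Next I would analyze the sign of $p$ on the punctured interval $(a-\delta,a+\delta)\setminus\{a\}$. By continuity of $q$ and $q(a)\ne 0$, shrinking $\delta$ if necessary we may assume $\sign(q(t))=\sign(q(a))$ throughout this interval. Hence $\sign(p(t))=\sign\!\big((t-a)^m\big)\cdot\sign(q(a))$. If $m$ is even, $(t-a)^m>0$ for $t\ne a$, so $\sign(p(t))$ is constant on the punctured interval; if $m$ is odd, $(t-a)^m$ has opposite signs on the two sides of $a$, so $\sign(p(t))$ takes the value $\sign(q(a))$ on one side and $-\sign(q(a))$ on the other.

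Finally I would translate this into the stated definition of ``changes sign''. On the punctured interval $p$ does not vanish, so $1/p(t)$ is defined there and, because $p(t)\to 0$ as $t\to a$, we have $1/p(t)\to\pm\infty$; more precisely $\lim_{t\to a^+}1/p(t)=+\infty$ iff $p(t)>0$ for $t$ slightly larger than $a$, and similarly from the left. Thus $\lim_{t\to a^+}1/p(t)\ne\lim_{t\to a^-}1/p(t)$ precisely when $\sign(p(t))$ differs on the two sides of $a$, which by the previous paragraph happens exactly when $m$ is odd. This gives the claim. I do not anticipate a serious obstacle here; the only point requiring a little care is justifying that the conjugate-pairing in Lemma \ref{lem:factor.poly} makes the cofactor $q$ a genuine real polynomial, and that $a$ being isolated among the real roots lets us pick the punctured neighborhood on which $p$ has no other zero.
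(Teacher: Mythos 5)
Your proposal is correct and follows essentially the same route as the paper: factor out $p=(t-a)^m q$ with $q(a)\ne 0$, use continuity of $q$ to fix its sign near $a$, split on the parity of $m$, and translate the sign behavior into the limit definition via $1/p(t)\to\pm\infty$. The extra care you take in justifying that $q$ has real coefficients and in choosing the punctured neighborhood is fine but not a different argument.
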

\begin{proof}
Let $a$ be a real root of $p$ with multiplicity $m$. Then $p=q(t-a)^m$, where $q$ is some polynomial in $\R[t]$ that is not zero at $a$. 
Consider the following two cases:
\begin{itemize}
\item Case 1: $m$ is odd. Suppose that  $q(a)>0$. By continuity $q$ is positive on an open interval containing $a$ implying that $\lim\limits_{t\to a^+}\frac{1}{p(t)}=\infty\ne -\infty=\lim\limits_{t\to a^-}\frac{1}{p(t)}$. Thus $p(t)$ changes sign at $a$. The same will be true if  $q(a)<0$. 
\item Case 2: $m$ is even.  If  $q(a)>0$, $q$ is positive near $a$ so $p$ is also positive on a deleted neighborhood of $a$.
It implies that $\lim\limits_{t\to a^+}\frac{1}{p(t)}=\infty=\lim\limits_{t\to a^-}\frac{1}{p(t)}$. Thus $p(t)$ does not change sign at $a$.
The same will be true if  $q(a)<0$. 
\end{itemize}
\end{proof}
A semi-algebraic description of an open semi-algebraic set defined only by a single inequality is present in the following lemma: 
\begin{lemma}\label{lem:odd.mul}
Let $p$ be a polynomial in $\R[t]$ of positive degree.
Suppose that each real root of $p$ has odd multiplicity.
Then $\{p\ge 0\}=\overline{\{p>0\}}$.
\end{lemma}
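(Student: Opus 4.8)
The plan is to prove the two inclusions $\overline{\{p>0\}}\subset\{p\ge 0\}$ and $\{p\ge 0\}\subset\overline{\{p>0\}}$ separately. The first does not use the hypothesis on multiplicities at all: the set $\{p\ge 0\}$ is the preimage of the closed set $[0,\infty)$ under the continuous map $t\mapsto p(t)$, hence closed, and it obviously contains $\{p>0\}$; taking closures gives $\overline{\{p>0\}}\subset\{p\ge 0\}$.

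For the reverse inclusion I would take an arbitrary point $a$ with $p(a)\ge 0$ and split into two cases. If $p(a)>0$, then $a\in\{p>0\}\subset\overline{\{p>0\}}$ and there is nothing to prove. If $p(a)=0$, then $a$ is a real root of $p$, which by assumption has odd multiplicity. The key step is then Lemma \ref{lem:chage.sign}: $p$ changes sign at $a$, i.e. $\lim_{t\to a^+}1/p(t)\ne\lim_{t\to a^-}1/p(t)$, and since these one-sided limits take only the values $\pm\infty$, at least one of them equals $+\infty$. Without loss of generality $\lim_{t\to a^+}1/p(t)=+\infty$, which forces $p(t)>0$ for every $t$ in some interval $(a,a+\delta)$ (here I also use that the roots of $p$ are isolated, so $p(t)\ne 0$ for $t$ near $a$, $t\ne a$). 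Hence every neighbourhood of $a$ meets $\{p>0\}$, so $a\in\overline{\{p>0\}}$. Combining the two cases gives $\{p\ge 0\}\subset\overline{\{p>0\}}$, and together with the first inclusion this proves $\{p\ge 0\}=\overline{\{p>0\}}$.

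I do not expect any real obstacle: the entire content is packaged in Lemma \ref{lem:chage.sign}, which is exactly the statement that the odd-multiplicity hypothesis makes $p$ take positive values arbitrarily close to each of its real zeros. The only thing I would be mildly careful about is the degenerate situations, and I would check that the case analysis above already absorbs them without a separate argument: if $p$ has no real roots the hypothesis is vacuous and both sides equal either $\R$ or $\emptyset$, and if $p\le 0$ on all of $\R$ (a situation the hypothesis in fact rules out) the step ``$p(a)=0$ $\Rightarrow$ $p$ changes sign at $a$ $\Rightarrow$ nearby positive values'' still applies verbatim.
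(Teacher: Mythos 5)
Your proof is correct, and it hinges on the same key ingredient as the paper's, namely Lemma \ref{lem:chage.sign}. The organization differs, though: the paper argues globally, writing $\{p>0\}$ explicitly as a finite union of open intervals whose endpoints are the real roots $a_1<\dots<a_s$ (with a case analysis on the sign of $\lc(p)$ and the parity of $s$), and then reads off that $\{p\ge 0\}=\{p>0\}\cup\{a_1,\dots,a_s\}$ coincides with the closure. You instead split into the two inclusions and argue locally at each root: the inclusion $\overline{\{p>0\}}\subset\{p\ge 0\}$ is free from continuity, and for the reverse inclusion the sign change at an odd-multiplicity root $a$ forces $p>0$ on a one-sided punctured neighbourhood of $a$, so $a\in\overline{\{p>0\}}$. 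Your version avoids the enumeration and the leading-coefficient case split entirely, at the cost of not exhibiting the interval structure of $\{p>0\}$ (which the paper does not actually need later). One small point: the parenthetical appeal to roots being isolated is redundant, since $1/p(t)$ being defined and large near $a$ already entails $p(t)\ne 0$ there; but this does not affect correctness.
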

\begin{proof}
Let $a_1,\dots,a_s$ be the real roots of $p$ such that $a_1<\dots<a_s$.
By assumption, these roots have odd multiplicities.
Since $p$ changes sign at each $a_i$ (by Lemma \ref{lem:chage.sign}), one of the following two cases occurs:
\begin{itemize}
\item Case 1: The leading coefficient of $p$ is negative, i.e., $\lc(p)<0$. Then
\begin{equation}
\{p>0\}=\begin{cases}
(-\infty,a_1)\cup(a_2,a_3)\cup \dots\cup (a_s,\infty)&\text{if $s$ is even}\,,\\
(-\infty,a_1)\cup(a_2,a_3)\cup \dots\cup (a_{s-1},a_s)&\text{otherwise}\,.\\
\end{cases}
\end{equation}
\item Case 2: $\lc(p)>0$. Then
\begin{equation}
\{p>0\}=\begin{cases}
(a_1,a_2)\cup(a_3,a_4)\cup \dots\cup (a_s,\infty)&\text{if $s$ is odd}\,,\\
(-\infty,a_1)\cup(a_2,a_3)\cup \dots\cup (a_s,\infty)&\text{otherwise}\,.\\
\end{cases}
\end{equation}
\end{itemize}
Thus $\{p\ge 0\}=\{p>0\}\cup\{a_1,\dots,a_s\}=\overline{\{p>0\}}$ yields the result.
\end{proof} 
Denote by $\inter(A)$ the interior of a set $A\subset\R^n$.
Let $\partial(A)$ stand for the boundary of $A\in\R^n$, i.e., $\partial(A)=\overline{A}\backslash \inter(A)$.
The following lemma shows a semi-algebraic description of the closure of an open semi-algebraic set:
\begin{lemma}\label{lem:closure.strict}
Let $p_1,\dots,p_m$ be polynomials in $\R[t]$ of positive degrees.
Suppose that the following conditions hold:
\begin{enumerate}
\item For $i=1,\dots,m$, each real root of $p_i$ has odd multiplicity.
\item Any couple of $p_1,\dots,p_m$ has no common real root.
\end{enumerate}
Then 
\begin{equation}
\overline{\bigcap_{i=1}^m \{p_i>0\}}=\bigcap_{i=1}^m \{p_i\ge 0\}\,.
\end{equation}
\end{lemma}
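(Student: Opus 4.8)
The plan is to establish the two inclusions separately, and to note that only one of the two hypotheses is needed for each. The inclusion $\overline{\bigcap_{i=1}^m\{p_i>0\}}\subset\bigcap_{i=1}^m\{p_i\ge 0\}$ is the easy direction and uses only condition 1: by the first statement of Lemma \ref{lem:intersec.closure} we have $\overline{\bigcap_{i=1}^m\{p_i>0\}}\subset\bigcap_{i=1}^m\overline{\{p_i>0\}}$, and since each real root of $p_i$ has odd multiplicity, Lemma \ref{lem:odd.mul} gives $\overline{\{p_i>0\}}=\{p_i\ge 0\}$ for every $i$. Intersecting over $i$ yields the claim.

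For the reverse inclusion $\bigcap_{i=1}^m\{p_i\ge 0\}\subset\overline{\bigcap_{i=1}^m\{p_i>0\}}$ I would fix an arbitrary $a$ with $p_i(a)\ge 0$ for all $i$ (if the left-hand set is empty the statement is trivial) and show that $a$ lies in the closure of $\bigcap_{i=1}^m\{p_i>0\}$. The key use of condition 2 is that the index set $J=\{i:p_i(a)=0\}$ has at most one element: two distinct indices in $J$ would make $a$ a common real root of two of the $p_i$, which is excluded. If $J=\emptyset$, then $a\in\bigcap_{i=1}^m\{p_i>0\}$ and there is nothing to prove. So, after relabelling, assume $J=\{1\}$, i.e.\ $p_1(a)=0$ and $p_i(a)>0$ for $i=2,\dots,m$.

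Now $a$ is a real root of $p_1$ of odd multiplicity, so by Lemma \ref{lem:chage.sign} the polynomial $p_1$ changes sign at $a$; hence $p_1>0$ on a one-sided interval $(a,a+\delta)$ or $(a-\delta,a)$. By continuity of $p_2,\dots,p_m$ together with $p_i(a)>0$, there is $\delta'>0$ with $p_i>0$ on $(a-\delta',a+\delta')$ for all $i\ge 2$. Replacing $\delta$ by $\min\{\delta,\delta'\}$, every point of the corresponding one-sided interval at $a$ lies in $\bigcap_{i=1}^m\{p_i>0\}$, and such points approach $a$; therefore $a\in\overline{\bigcap_{i=1}^m\{p_i>0\}}$, completing the argument. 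I expect the only delicate point to be the reverse inclusion, and within it the observation that condition 2 reduces everything to the case of a single vanishing polynomial — once that reduction is in place, the sign-change property from Lemma \ref{lem:chage.sign} and ordinary continuity of the remaining polynomials finish the proof routinely.
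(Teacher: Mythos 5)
Your proof is correct and follows essentially the same route as the paper: the easy inclusion via Lemma \ref{lem:intersec.closure} and Lemma \ref{lem:odd.mul}, and the reverse inclusion by using condition 2 to reduce to a single vanishing polynomial, Lemma \ref{lem:chage.sign} for the sign change of that one, and continuity for the rest. Your organization by the set $J$ of vanishing indices is, if anything, slightly cleaner than the paper's interior/boundary case split.
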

\begin{proof}
Set 
\begin{equation}\label{eq:set.stric.nonstrict}
A=\bigcap_{i=1}^m \{p_i\ge 0\}\qquad\text{and}\qquad B=\bigcap_{i=1}^m \{p_i>0\}\,.
\end{equation}
We claim that $\overline{B}\subset A$.
By assumption, Lemma \ref{lem:odd.mul} yields $\overline{\{ p_i>0\}}=\{ p_i\ge 0\}$.
Form this and the first statement of Lemma \ref{lem:intersec.closure}, we obtain
\begin{equation}
\overline{B}\subset \bigcap_{i=1}^m \overline{\{ p_i>0\}}=\bigcap_{i=1}^m \{p_i\ge 0\}=A\,.
\end{equation}
It is sufficient to prove that $A\subset \overline{B}$.
Let $a\in A$. If $a\in \inter(A)$, then $a\in B$ implies that $a\in\overline{B}$.
Assume that $a\in\partial(A)$.
Then $a$ is a root of $p_j$ for some $j\in\{1,\dots,m\}$. 
By assumption, $a$ is a root of $p_j$ with odd multiplicity.
By Lemma \ref{lem:chage.sign}, $p_j$ changes sign at $a$.
Thus there is a sequence $(a_k)_{k\in\N}\subset \{p_j>0\}$ such that $\lim\limits_{k\to\infty}a_k=a$.
By assumption, $a$ is not a root of $p_i$ for any $i\in\{1,\dots,m\}\backslash \{j\}$.
Since $a\in A$, it implies that $a$ is in the open set
\begin{equation}
\arraycolsep=1.4pt\def\arraystretch{.5}
U=\bigcap_{\begin{array}{cc}
\scriptstyle i=1 \\
 \scriptstyle i\ne j
\end{array}}^{m} \{p_i>0\}\,.
\end{equation}
Since the sequence $(a_k)_{k\in\N}\subset \{p_j>0\}$ converges to $a$, there exists a subsequence 
$(a_{k_r})_{r\in\N}\subset B=U\cap \{p_j>0\}$ that converges to $a$.
It implies that $a\in \overline{B}$.
Hence the result follows.
\end{proof}
In the following lemma, we present the property of the intersection of the closure of an open semi-algebraic set with a variety:
\begin{lemma}\label{lem:closure.strict.intersec}
Let $p_1,\dots,p_m$ be polynomials in $\R[t]$ of positive degrees.
Let $V$ be a finite subset of $\R$ such that any $p_i$ has no root in $V$.
Then 
\begin{equation}
\overline{\bigcap_{i=1}^m \{p_i>0\}}\cap V=\overline{\bigcap_{i=1}^m \{p_i>0\}\cap V}\,.
\end{equation}
\end{lemma}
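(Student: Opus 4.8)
The plan is to reduce the claimed identity to the observation that both sides coincide with one explicit finite set. Write $B:=\bigcap_{i=1}^m\{p_i>0\}$, an open subset of $\R$, so that the statement reads $\overline{B}\cap V=\overline{B\cap V}$. Since $V$ is finite it is closed, hence $B\cap V$ is finite and therefore closed, so $\overline{B\cap V}=B\cap V$. Thus it suffices to prove the set identity $\overline{B}\cap V=B\cap V$.

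The inclusion $B\cap V\subset\overline{B}\cap V$ is immediate from $B\subset\overline{B}$. For the reverse inclusion I would take $a\in\overline{B}\cap V$ and show $a\in B$, i.e.\ $p_i(a)>0$ for all $i$. This is where the hypothesis that no $p_i$ has a root in $V$ enters: since $a\in V$, we have $p_i(a)\ne 0$ for every $i\in\{1,\dots,m\}$, so it only remains to rule out $p_j(a)<0$. If some $p_j(a)<0$, then continuity of $p_j$ gives an open interval $N$ containing $a$ with $p_j<0$ on $N$; hence $N\cap\{p_j>0\}=\emptyset$, and a fortiori $N\cap B=\emptyset$, contradicting $a\in\overline{B}$. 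Therefore $p_i(a)>0$ for every $i$, so $a\in B\cap V$, which gives $\overline{B}\cap V\subset B\cap V$ and finishes the argument.

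I do not expect a genuine obstacle here: this is a soft point-set argument, and the only thing to get right is isolating the two structural facts doing the work, namely that finiteness of $V$ makes $B\cap V$ closed and collapses the right-hand closure, while the non-vanishing of the $p_i$ on $V$ (together with continuity) is exactly what prevents $\overline{B}$ from meeting $V$ at any point outside $B$. A quick sanity check (for instance $p_1(t)=t$, $B=(0,\infty)$, $V=\{-1,1\}$, versus the forbidden choice $V=\{0,1\}$) confirms that this hypothesis is essential, which tells me the continuity step above is the heart of the matter and should be written out carefully; everything else is bookkeeping.
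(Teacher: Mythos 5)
Your proof is correct and takes essentially the same approach as the paper: in both, the key step is that a point $a\in\overline{B}\cap V$ must satisfy $p_i(a)>0$ for every $i$, because the $p_i$ are continuous and do not vanish on $V$. The paper argues this with a convergent sequence from $B$ (giving $p_i(a)\ge 0$, hence $>0$) while you argue by a neighborhood contradiction, and you additionally streamline the other inclusion by noting $B\cap V$ is finite hence closed; these are only cosmetic differences.
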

\begin{proof}
Set $A$ as in \eqref{eq:set.stric.nonstrict}.
The first statement of Lemma \ref{lem:intersec.closure} yields $\overline{A\cap V}\subset \overline{A}\cap V$.
It is sufficient to prove that $\overline{A}\cap V\subset \overline{A\cap V}$.
Let $a\in \overline{A}\cap V$.
Since $a\in \overline{A}$, there is a sequence $(a_k)_{k\in\N}\subset A$ such that $\lim\limits_{k\to\infty}a_k=a$.
Since $a\in V$, $a$ is not a root of any $p_i$.
By Lemma \ref{lem:chage.sign}, each $p_i(a)$ is positive or negative.
By the continuity of $p_i$, $p_i(a)=\lim\limits_{k\to\infty}p_i(a_k)\ge 0$ implies $a\in \{p_i>0\}$.  
Then $a$ is in $A$, and therefore, is in $A\cap V$.
Thus $a\in\overline{A\cap V}$ yields the result.
\end{proof}

\subsection{Elimination of common roots for polynomial (in)equalities}
Let $B$ be an elementary semi-algebraic subset of $\R$ defined by
\begin{equation}\label{eq:elem.semial.set.uni}
B = \{t\in\R\,:\,p_1(t)> 0,\dots,p_m(t)> 0\,,\,q_1(t)=\dots=q_l(t)=0\}\,,
\end{equation}
where $p_i,q_j$ are polynomials in $\R[t]$.
For simplicity, we write $B$ as
\begin{equation}\label{eq:elem.semial.set.uni.simple}
B = \{p_1> 0,\dots,p_m>0\,,\,q_1=\dots=q_l=0\}\,.
\end{equation}
The set $\{q_1=\dots=q_l=0\}$ is call variety defined by polynomials $q_1,\dots,q_l$.

The following two lemmas allow us to obtain a new semi-algebraic description of an elementary semi-algebraic set $B$ such that the new inequality polynomials have no root in the variety defined by the new equality polynomials determining $B$:
\begin{lemma}\label{lem:no.root.variety}
Let $B\subset \R$ be an elementary semi-algebraic set  defined as in \eqref{eq:elem.semial.set.uni.simple} such that each $q_j$ has only simple roots.
Let $i\in\{1,\dots,m\}$ be fixed.
Set $w=\gcd(p_i,q_1,\dots,q_l)$.
For $j=1,\dots,l$, set $\check q_j=q_j/w$.
Then the following two statements hold:
\begin{enumerate}
\item $B=\{p_1>0,\dots,p_m>0\,,\,\check q_1=\dots=\check q_l=0\}$.
\item $p_i$ has no root in $\{\check q_1=\dots=\check q_l=0\}$.
\end{enumerate}
\end{lemma}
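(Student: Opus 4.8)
The plan is to treat the two assertions separately; both follow from elementary divisibility and multiplicity considerations, and only the second one uses the hypothesis that each $q_j$ has only simple roots. The two facts doing most of the work are: (i) $\check q_j=q_j/w$ is a genuine polynomial, since $w=\gcd(p_i,q_1,\dots,q_l)$ divides each $q_j$ by definition of the gcd; and (ii) $w$ divides $p_i$, so every real root of $w$ is a root of $p_i$, whence $\{w=0\}\cap\{p_i>0\}=\emptyset$.

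For the first statement I would argue purely on real zero sets. Because $\R$ has no zero divisors and $q_j=w\,\check q_j$, we have $\{q_j=0\}=\{w=0\}\cup\{\check q_j=0\}$ for each $j$; intersecting over $j$ and using distributivity of union over intersection gives $\{q_1=\dots=q_l=0\}=\{w=0\}\cup\{\check q_1=\dots=\check q_l=0\}$. Intersecting this with $\bigcap_{i'=1}^m\{p_{i'}>0\}$ and distributing once more, the piece meeting $\{w=0\}$ is contained in $\{p_i>0\}\cap\{w=0\}=\emptyset$ by fact (ii), so it vanishes and one is left with exactly $\{p_1>0,\dots,p_m>0,\ \check q_1=\dots=\check q_l=0\}$, i.e., with $B$.

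For the second statement I would assume, for contradiction, that some $a\in\R$ is a common root of $p_i$ and of all $\check q_1,\dots,\check q_l$. Then $q_j(a)=w(a)\check q_j(a)=0$ for every $j$, so $a$ is a common real root of $p_i,q_1,\dots,q_l$ and therefore $(t-a)$ divides $w=\gcd(p_i,q_1,\dots,q_l)$. Fixing any $j\in\{1,\dots,l\}$ (here $l\ge 1$ is implicitly needed, as the statement itself forces), $(t-a)$ also divides $\check q_j$, hence $(t-a)^2$ divides $q_j=w\,\check q_j$; thus $a$ is a root of $q_j$ of multiplicity at least two, contradicting the assumption that $q_j$ has only simple roots.

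Everything here is routine, so there is no real obstacle; the one point that needs care is the multiplicity bookkeeping in the second statement --- recognizing that the gcd already absorbs one factor $(t-a)$ while the hypothetical root of $\check q_j$ supplies a second, pushing the multiplicity of $q_j$ past the allowed value. I would also dispose, in a line or by standing convention, of the degenerate cases ($l=0$, or some $q_j$ or $p_i$ identically zero) in which the $\check q_j$, or the variety $\{\check q_1=\dots=\check q_l=0\}$, are not what the construction intends.
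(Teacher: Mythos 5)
Your proof is correct and follows essentially the same route as the paper's: both rest on the observations that $w\mid p_i$ forces $\{w=0\}\cap\{p_i>0\}=\emptyset$ (giving statement 1, whether phrased pointwise as in the paper or via set distributivity as you do) and that a common root $a$ of $p_i$ and the $\check q_j$ would make $(t-a)^2$ divide $q_j=w\check q_j$, contradicting simplicity of the roots of $q_j$. No substantive difference.
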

\begin{proof}
Set $A=\{p_1>0,\dots,p_m>0\,,\,\check q_1=\dots=\check q_l=0\}$.
Let $z\in B$. If $w(z)=0$, then $p_i(z)=0$ by assumption. It is impossible since $p_i(z)>0$.
Thus $w(z)\ne 0$.
Since $w(z)\check q_j(z)=q_j(z)=0$, $\check q_j(z)=0$ gives $z\in A$.
It implies $B\subset A$.
It is clear that $A\subset B$ since $\check q_j(z)=0$ implies $q_j(z)=0$.
Thus $A=B$ yields the first statement.
Assume by contradiction that $p_i$ has root $a\in \{\check q_1=\dots=\check q_l=0\}$. 
Then $q_j(a)=w(a)\check q_j(a)=0=p_i(a)$ leads to $t-a$ divides $w=\gcd(p_i,q_1,\dots,q_l)$. 
Note that $t-a$ also divides $\check q_j$.
It implies that $(t-a)^2$ divides $q_j=w\check q_j$. It is impossible since $q_j$ has only simple roots.
Hence the second statement follows.
\end{proof}
\begin{remark}\label{rem:complex.no.root.variety}
Obtaining $\check q_j$s in Lemma \ref{lem:no.root.variety} requires $(l+1)\times O(d^2)$ arithmetic operations, where $d$ is the upper bound on the degrees of $p_i$s and $q_j$s.
\end{remark}
In the following lemma, we generalize the result in Lemma \ref{lem:no.root.variety} by removing the assumption that each inequality polynomial has only simple roots:
\begin{lemma}\label{lem:no.root.variety2}
Let $B\subset \R$ be an elementary semi-algebraic set  defined as in \eqref{eq:elem.semial.set.uni.simple}.
Then there is a symbolic algorithm which produces $\check q_1,\dots,\check q_l$ such that the following two statements hold:
\begin{enumerate}
\item $B=\{p_1>0,\dots,p_m>0\,,\,\check q_1=\dots=\check q_l=0\}$.
\item For $i=1,\dots,m$, $p_i$ has no root in $\{\check q_1=\dots=\check q_l=0\}$.
\end{enumerate}
\end{lemma}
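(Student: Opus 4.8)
The plan is to first massage the equality polynomials so that the hypothesis of Lemma~\ref{lem:no.root.variety} holds, and then to apply Lemma~\ref{lem:no.root.variety} once for each inequality polynomial $p_i$, checking that the relevant properties survive from one application to the next. Throughout I assume, discarding any trivial constraint $q_j\equiv 0$, that every $q_j$ is a nonzero polynomial and that $l\ge 1$; the case $l=0$ is read with the convention that $B$ carries no equality constraint.

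First I would replace each $q_j$ by its square-free part. By Lemma~\ref{lem:radical}, $\bar q_j:=q_j/\gcd(q_j,q_j')$ is a polynomial having only simple roots, and its set of complex (in particular real) roots coincides with that of $q_j$. Hence the varieties $\{\bar q_1=\dots=\bar q_l=0\}$ and $\{q_1=\dots=q_l=0\}$ are equal, so $B=\{p_1>0,\dots,p_m>0\,,\,\bar q_1=\dots=\bar q_l=0\}$, and now each equality polynomial has only simple roots.

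Next I would run the obvious loop. Initialize $q_j^{(0)}:=\bar q_j$. For $i=1,\dots,m$, apply Lemma~\ref{lem:no.root.variety} to the elementary semi-algebraic set $\{p_1>0,\dots,p_m>0\,,\,q_1^{(i-1)}=\dots=q_l^{(i-1)}=0\}$ with the fixed index $i$; this is legitimate because each $q_j^{(i-1)}$ divides $\bar q_j$ and therefore has only simple roots. This produces $q_j^{(i)}:=q_j^{(i-1)}/\gcd(p_i,q_1^{(i-1)},\dots,q_l^{(i-1)})$ such that $B$ is unchanged and $p_i$ has no root in $V^{(i)}:=\{q_1^{(i)}=\dots=q_l^{(i)}=0\}$. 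Finally output $\check q_j:=q_j^{(m)}$. Statement~1 holds because $B$ is preserved at every iteration. For statement~2, note that $q_j^{(i)}$ divides $q_j^{(i-1)}$ for all $i,j$, so the varieties are nested, $V^{(m)}\subseteq V^{(m-1)}\subseteq\dots\subseteq V^{(1)}$; since $p_i$ has no root in $V^{(i)}$ and $V^{(m)}\subseteq V^{(i)}$, it has no root in $V^{(m)}=\{\check q_1=\dots=\check q_l=0\}$, which is exactly statement~2.

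The only (minor) obstacle is the bookkeeping in the loop: one must verify that the simple-roots hypothesis of Lemma~\ref{lem:no.root.variety} is not destroyed in passing from round $i-1$ to round $i$ (it is not, since each new equality polynomial divides the square-free $\bar q_j$), and that the non-root property established in an early round is not lost in a later round (it is not, because the varieties only shrink). As an alternative that avoids the loop, one can set $P:=p_1p_2\cdots p_m$, $w:=\gcd(\bar q_1,\dots,\bar q_l,P)$ and $\check q_j:=\bar q_j/w$: any $z\in B$ makes $P(z)>0$, hence $w(z)\ne 0$, which gives $B=\{p_1>0,\dots,p_m>0\,,\,\check q_1=\dots=\check q_l=0\}$, while a common root $a$ of all $\check q_j$ that also kills some $p_i$ would be a common root of $\bar q_1,\dots,\bar q_l,P$, forcing $(t-a)\mid w$ and $(t-a)\mid\check q_j$, hence $(t-a)^2\mid\bar q_j$, contradicting square-freeness.
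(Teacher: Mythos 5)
Your main argument is exactly the paper's proof: pass to the square-free parts $\hat q_j=q_j/\gcd(q_j,q_j')$ via Lemma~\ref{lem:radical} and then apply Lemma~\ref{lem:no.root.variety} once for each $i=1,\dots,m$; your extra bookkeeping (square-freeness is inherited by divisors, and the varieties only shrink) correctly fills in what the paper leaves implicit. The one-shot alternative with $w=\gcd(\bar q_1,\dots,\bar q_l,p_1\cdots p_m)$ is also valid and is a nice simplification, but the core route is the same.
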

\begin{proof}
For $j=1,\dots,l$, set $\hat q_j=q_j/\gcd(q_j,q_j')$.
Lemma \ref{lem:radical} says that $\hat q_j$ has only simple roots and $\{\hat q_j=0\}=\{q_j=0\}$.
It implies that 
\begin{equation}
B=\{p_1>0,\dots,p_m>0\,,\,\hat q_1=\dots=\hat q_l=0\}\,.
\end{equation}
Applying Lemma \ref{lem:no.root.variety} several times with $i=1,\dots,m$, we obtain $\check q_1,\dots,\check q_l$ as in the conclusion.
\end{proof}
\begin{remark}\label{rem:complex.no.root.variety2}
Thanks to Remark \ref{rem:complex.no.root.variety},
obtaining $\check q_j$s in Lemma \ref{lem:no.root.variety2} requires $l+(m+1)(l+1)\times O(d^2)$ arithmetic operations, where $d$ is the upper bound on the degrees of $p_i$s and $q_j$s.
\end{remark}
The following lemma is an induction step in the proof of later Lemma \ref{lem:alg.remove.common.root}:
\begin{lemma}\label{lem:remove.common.root}
Let $B\subset \R$ be an elementary semi-algebraic set  defined as in \eqref{eq:elem.semial.set.uni.simple}.
Let $s<r<m$ and assume that the following conditions hold:
\begin{enumerate}
\item For $i=1,\dots,m$, polynomial $p_i$ has only simple roots.
\item For $i=1,\dots,m$, polynomial $p_i$ has no root in $\{q_1=\dots=q_l=0\}$.
\item For every $(i,j)\in\{1,\dots,r\}^2$ satisfying $i\ne j$, polynomials $p_i$ and $p_j$ have no common root.
\item For $i=1,\dots,s$, polynomials $p_{r+1}$ and $p_i$ have no common root.
\end{enumerate}
Set
$u_0:=\gcd(p_{s+1},p_{r+1})$.
Let $u_{s+1}=p_{s+1}/u_0$ and $u_{r+1}=p_{r+1}/u_0$.
Then $B=B_+\cup B_-$, where 
\begin{equation}
\begin{array}{rl}
B_+ = &\{u_0>0,p_1> 0,\dots,p_s>0,u_{s+1}>0,p_{s+2}>0,\dots,\\
&\ \ p_{r}>0,u_{r+1}>0,p_{r+2}>0,\dots,p_m>0\,,\,q_1=\dots=q_l=0\}\text{ and}\\
B_- = &\{-u_0>0,p_1> 0,\dots,p_s>0,-u_{s+1}>0,p_{s+2}>0,\dots,\\
&\ \ p_{r}>0,-u_{r+1}>0,p_{r+2}>0,\dots,p_m>0\,,\,q_1=\dots=q_l=0\}\\
\end{array}
\end{equation}
are elementary semi-algebraic sets such that the following statements hold:
\begin{enumerate}
\item Polynomials $u_0,u_{s+1},u_{r+1}\in\R[t]$ have only simple roots.
\item Polynomials $u_0,u_{s+1},u_{r+1}\in\R[t]$ have no root in $\{q_1=\dots=q_l=0\}$.
\item For $i\in\{1,\dots,s\}\cup\{s+2,\dots,r\}$, polynomials $u_0$ and $p_i$ have no common root.
\item For $i\in\{1,\dots,s\}\cup\{s+2,\dots,r\}$, polynomials $u_{s+1}$ and $p_i$ have no common root.
\item Polynomials $u_0$ and $u_{s+1}$ have no common root;
\item For $i=1,\dots,s$, polynomials $u_{r+1}$ and $p_i$ have no common root.
\item Polynomials $u_{r+1}$ and $u_0$ have no common root;
\item  Polynomials $u_{r+1}$ and $u_{s+1}$ have no common root.
\end{enumerate}
\end{lemma}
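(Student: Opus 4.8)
The plan is to prove the set identity $B = B_+ \cup B_-$ first, and then verify the eight structural properties one at a time. For the set identity, note that $u_0 = \gcd(p_{s+1}, p_{r+1})$ divides both $p_{s+1}$ and $p_{r+1}$, so $p_{s+1} = u_0 u_{s+1}$ and $p_{r+1} = u_0 u_{r+1}$ as polynomials. Hence for any $z \in \R$, the conditions $p_{s+1}(z) > 0$ and $p_{r+1}(z) > 0$ together are equivalent to the disjunction of ($u_0(z) > 0$, $u_{s+1}(z) > 0$, $u_{r+1}(z) > 0$) and ($u_0(z) < 0$, $u_{s+1}(z) < 0$, $u_{r+1}(z) < 0$): indeed, if $u_0(z) = 0$ then $p_{s+1}(z) = 0$, contradicting $p_{s+1}(z) > 0$, so $u_0(z) \ne 0$, and then the signs of $u_{s+1}(z), u_{r+1}(z)$ are forced to match that of $u_0(z)$ in order for the two products to be positive. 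Intersecting this equivalence with the remaining (unchanged) constraints $p_i > 0$ for $i \notin \{s+1, r+1\}$ and $q_1 = \dots = q_l = 0$ gives exactly $B = B_+ \cup B_-$. (I note there is a sign convention in the displayed $B_-$, namely each of $-u_0, -u_{s+1}, -u_{r+1}$ is required positive, which is precisely the ``all three negative'' branch.)

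Next I would establish the properties about simple roots and about having no root in the variety $\{q_1 = \dots = q_l = 0\}$. A root $a$ of $u_0$ is a common root of $p_{s+1}$ and $p_{r+1}$ (since $u_0 \mid p_{s+1}$), and $p_{s+1}$ has only simple roots by hypothesis~1, so $a$ is a simple root of $p_{s+1}$; since $u_0 \mid p_{s+1}$, the multiplicity of $a$ in $u_0$ is at most its multiplicity in $p_{s+1}$, hence $u_0$ has only simple roots. For $u_{s+1} = p_{s+1}/u_0$: any root of $u_{s+1}$ is a root of $p_{s+1}$, again simple in $p_{s+1}$, hence simple in the divisor-quotient $u_{s+1}$; the same argument applies to $u_{r+1}$. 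For property~2: every root of $u_0$ is a root of $p_{s+1}$, which by hypothesis~2 has no root in $\{q_1 = \dots = q_l = 0\}$; likewise every root of $u_{s+1}$ (resp. $u_{r+1}$) is a root of $p_{s+1}$ (resp. $p_{r+1}$), so none lies in that variety.

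For the coprimality-type properties (items 3--8), the unifying principle is: any root of $u_0$, $u_{s+1}$, or $u_{r+1}$ is a root of $p_{s+1}$ or $p_{r+1}$, so I can transport each ``no common root'' claim back to a hypothesis about $p_{s+1}$ or $p_{r+1}$. Concretely, for item~3 ($u_0$ and $p_i$, $i \in \{1,\dots,s\}\cup\{s+2,\dots,r\}$): a common root would be a common root of $p_{s+1}$ and $p_i$ with both indices in $\{1,\dots,r\}$ and distinct, contradicting hypothesis~3. Item~4 ($u_{s+1}$ and $p_i$, same range): same argument, since a root of $u_{s+1}$ is a root of $p_{s+1}$. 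Item~6 ($u_{r+1}$ and $p_i$, $i \le s$): a common root is a common root of $p_{r+1}$ and $p_i$, contradicting hypothesis~4. Items~7 and~8 ($u_{r+1}$ with $u_0$, resp. $u_{s+1}$): a common root of $u_{r+1}$ and $u_0$ (resp. $u_{s+1}$) is a common root of $p_{r+1}$ and $p_{s+1}$ that would be a root of $\gcd(p_{s+1},p_{r+1}) = u_0$; since $u_0 \mid p_{s+1}$ and $p_{s+1}$ has only simple roots, writing $p_{s+1} = u_0 u_{s+1}$ shows that a root of $u_0$ cannot also be a root of $u_{s+1}$ (it would be a double root of $p_{s+1}$), giving item~8; item~7 is immediate since any common root of $u_{r+1}$ and $u_0$ is such a root of $u_0$, but a root of $u_{r+1}$ is a root of $p_{r+1}$ and a root of $u_0$ is a root of $p_{s+1}$, so this is a common root of $p_{s+1}$ and $p_{r+1}$, hence a root of $u_0$ and simultaneously of $p_{r+1}/u_0 = u_{r+1}$ — and since $u_0 \mid p_{r+1}$ with, after removing multiplicities if needed, the same simple-root argument applied to $p_{r+1}$ (simple roots by hypothesis~1), this is impossible. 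The one remaining item, item~5 ($u_0$ and $u_{s+1}$ no common root), is the cleanest instance of the same ``$p_{s+1} = u_0 u_{s+1}$ has only simple roots, so its two factors share no root'' observation.

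The main obstacle is bookkeeping rather than mathematics: one must be careful that $u_0 = \gcd(p_{s+1},p_{r+1})$ genuinely divides $p_{s+1}$ and $p_{r+1}$ in $\R[t]$ (so that $u_{s+1}, u_{r+1}$ are honest polynomials — this is where the simple-roots hypothesis on $p_{s+1}, p_{r+1}$ is quietly used, via the factor/quotient of a polynomial with simple roots again having simple roots), and that the index ranges in items 3--8 are matched exactly to hypotheses 3 and 4 — in particular that hypothesis~3 only covers pairs within $\{1,\dots,r\}$ while hypothesis~4 covers $p_{r+1}$ against $p_1,\dots,p_s$, which is why items~3--5 involve $u_0, u_{s+1}$ (the ``$p_{s+1}$ side'') against indices up to $r$, whereas item~6 involves $u_{r+1}$ (the ``$p_{r+1}$ side'') only against indices up to $s$. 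Once the translation ``root of $u_\bullet$ $\Rightarrow$ root of the corresponding $p$'' is set up, each of the eight items is a one-line consequence.
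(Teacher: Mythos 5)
Your proposal is correct and follows essentially the same route as the paper: factor $p_{s+1}=u_0u_{s+1}$, $p_{r+1}=u_0u_{r+1}$, split by the sign of $u_0$ to get $B=B_+\cup B_-$, and reduce each coprimality claim to the hypotheses on $p_{s+1},p_{r+1}$ (the paper proves item~8 by noting that a nonconstant $\gcd(u_{s+1},u_{r+1})$ would contradict the maximality of $u_0=\gcd(p_{s+1},p_{r+1})$, while you route it through the simple-roots hypothesis; both work). One small inaccuracy in your closing remark: $u_{s+1}$ and $u_{r+1}$ are polynomials simply because the gcd divides both arguments, not because of the simple-roots hypothesis.
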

\begin{proof}
Since $p_{s+1}=u_{s+1}u_0$ and $p_{r+1}=u_{r+1}u_0$ have only simple roots and have no root in $\{q_1=\dots=q_l=0\}$, the first two statement hold.
In addition, we get
\begin{equation}
\begin{array}{rl}
&\{p_{s+1}>0,p_{r+1}>0\}\\
=&\{u_0>0,u_{s+1}>0,u_{r+1}>0\}\cup \{-u_0>0,-u_{s+1}>0,-u_{r+1}>0\}\,.
\end{array}
\end{equation}
It implies that $B=B_+\cup B_-$.
The next two statements follow since $p_{s+1}=u_{s+1}u_0$ and the couple $p_{s+1},p_i$ has no common root, for $i\in\{1,\dots,s\}\cup\{s+2,\dots,r\}$.
The fifth statement is due to the fact that $p_{s+1}=u_{s+1}u_0$ has only simple roots.
The sixth statement is because for every $i=1,\dots,s$, $p_{r+1}=u_{r+1}u_0$ and $p_i$ have no common root.
The seventh statement is due to the fact that $p_{r+1}=u_{r+1}u_0$ has only simple roots.
If $w=\gcd(u_{s+1},u_{r+1})$ is not constant, $wu_0$ divides both $p_{s+1}$ and $p_{r+1}$.
This is impossible since $u_0=\gcd(p_{s+1},p_{r+1})$.
Thus the eighth statement holds.
\end{proof}
\begin{remark}
We interpret the result of Lemma \ref{lem:remove.common.root} more simply as follows:
The elementary semi-algebraic sets $B_+$ and $B_-$ in Lemma \ref{lem:remove.common.root} are of the form
\begin{equation}
\{\hat p_0> 0,\dots,\hat p_m>0\,,\,q_1=\dots=q_l=0\}\,,
\end{equation}
which satisfies the following conditions:
\begin{enumerate}
\item For $i=0,1,\dots,m$, polynomial $\hat p_i$ has only simple roots.
\item For $i=0,1,\dots,m$, polynomial $\hat p_i$ has no root in $\{q_1=\dots=q_l=0\}$.
\item For every $(i,j)\in\{0,1,\dots,r\}^2$ satisfying $i\ne j$, polynomials $\hat p_i$ and $\hat p_j$ have no common root.
\item For $i=0,1,\dots,s+1$, polynomials $\hat p_{r+1}$ and $\hat p_i$ have no common root.
\end{enumerate}
\end{remark}
\begin{remark}\label{rem:complex.remove.common.root}
Obtaining $u_0,u_{s+1},u_{r+1}$ in Lemma \ref{lem:remove.common.root} requires $O(d^2)+2$ arithmetic operations, where $d$ is the upper bound on the degrees of $p_i$s and $q_j$s.
\end{remark}

\subsection{Nichtnegativstellens\"atze for univariate polynomials}
Let $\Sigma[t]$ stand for the cone of sums of squares of polynomials in $\R[t]$.
For given $g\in\R[t]$, $p=\{p_1,\dots,p_m\}\subset \R[t]$ and $q=\{q_1,\dots,q_l\}\subset \R[t]$, define:
\begin{equation}
S(p,q):=\{t\in\R\,:\,p_1(t)\ge 0,\dots,p_m(t)\ge 0\,,\,q_1(t)=\dots=q_l(t)=0\}\,,
\end{equation}
and for all $k\in\N$,
\begin{equation}\label{eq:preordering}
T_k(p,q)[t]:=\left\{\sum\limits_{\alpha\in\{0,1\}^m} \sigma_\alpha p^\alpha+\sum_{j=1}^l \psi_jq_j\left|\begin{array}{rl}
&\sigma_i\in\Sigma[t]\,,\,\psi_j\in\R[t]\,,\\
&\deg(\sigma_\alpha p^\alpha)\le 2k,\deg(\psi_jq_j)\le 2k
\end{array}\right.
\right\}\,,
\end{equation}
\begin{equation}\label{eq:SOS.stenthening}
\rho_k(g,p,q):=\sup\{\lambda\in\R\,:\,g-\lambda\in T_k(p,q)[t]\}\,,
\end{equation}
where $p^\alpha:=p_1^{\alpha_1}\dots p_m^{\alpha_m}$.

Originally developed by Lasserre in \cite{lasserre2001global}, problem \eqref{eq:SOS.stenthening} is called the sums-of-squares strengthening of order $k$ for polynomial optimization problem
\begin{equation}\label{eq:pop2}
\rho^\star(g,p,q):=\inf_{t\in S(p,q)}g(t)\,.
\end{equation}
Moreover, we can formulate problem \eqref{eq:SOS.stenthening} as a semidefinite program, the optimization of a linear function over the intersection of an affine subspace with the cone of positive semidefinite matrices.

\begin{lemma}\label{lem:cond.cert}
Let $p=\{p_1,\dots,p_m\}$ and $q=\{q_1,\dots,q_l\}$ be subsets of $\R[t]$ such that the following conditions hold:
\begin{enumerate}
\item For $i=1,\dots,m$, polynomial $p_i$ has only simple roots.
\item For every $(i,j)\in\{1,\dots,m\}^2$ satisfying $i\ne j$, polynomials $p_i$ and $p_j$ have no common root.
\end{enumerate}
Then one of the following two cases occurs:
\begin{enumerate}
\item The set $S(p,q)$ is a finite union of isolated points.
\item The set $S(p,q)$ is a finite union of closed intervals and has no isolated point. 
In this case, for each endpoint $a$ of $S(p,q)$, there exists $i\in\{1,\dots,m\}$ such that $(t-a)$ divides $p_i$ but $(t-a)^2$ does not.
\end{enumerate}
\end{lemma}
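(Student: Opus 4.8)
The plan is to separate two regimes according to whether the equality constraints are nontrivial. First I would note that if some $q_{j_0}$ is not identically zero, then $\{q_1=\dots=q_l=0\}\subseteq\{q_{j_0}=0\}$ is a finite set, so $S(p,q)$ is finite and hence a finite union of isolated points; this is Case~1. Thus I may assume $l=0$ or every $q_j$ vanishes identically, so that $S(p,q)=\{t\in\R:p_i(t)\ge0\text{ for all }i\}$. A brief clean-up of constant $p_i$'s is also in order: if some $p_i$ is a negative constant then $S(p,q)=\emptyset$, which is a finite union of isolated points, and any $p_i$ that is a nonnegative constant can be discarded without changing $S(p,q)$; after this I may assume that each $p_i$ has positive degree, and if no $p_i$ remains then $S(p,q)=\R$, a single non-degenerate closed interval with no endpoints, so Case~2 holds.

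Next I would prove that, under hypotheses (1)--(2), the set $S:=S(p,q)$ has no isolated point. Fix $a\in S$. If $a$ is a root of none of the $p_i$, then $p_i(a)>0$ for all $i$, so by continuity every $p_i$ is positive on some neighbourhood of $a$, which is then contained in $S$; hence $a$ is interior to $S$. Otherwise $a$ is a root of some $p_i$, and by hypothesis (2) it is a root of exactly one of them, say $p_{i_0}$; by hypothesis (1) $a$ is a simple root, so by Lemma~\ref{lem:chage.sign} $p_{i_0}$ changes sign at $a$. The polynomials $p_1,\dots,p_m$ have only finitely many roots together, so I can pick $\varepsilon>0$ small enough that none of them vanishes on $(a-\varepsilon,a)\cup(a,a+\varepsilon)$. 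For $j\ne i_0$ we have $p_j(a)\ne0$ (else $a$ would be a common root of $p_j$ and $p_{i_0}$) and $p_j(a)\ge0$ since $a\in S$, hence $p_j(a)>0$, and then $p_j>0$ on all of $(a-\varepsilon,a+\varepsilon)$; meanwhile $p_{i_0}$ is positive on exactly one of the two punctured half-intervals. On that half-interval, augmented by $a$, all the $p_i$ are $\ge0$, so it lies in $S$; therefore $a$ is a one-sided limit of points of $S\setminus\{a\}$ and is not isolated. Since $S$ is a finite intersection of closed sets it is closed; by the third statement of Lemma~\ref{lem:properties.semi-algebraic.set} it is a finite union of points and intervals, hence has finitely many connected components, each a closed interval; a singleton component would be an isolated point of $S$, which we have just excluded, so $S$ is a finite union of non-degenerate closed intervals. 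This is the first assertion of Case~2.

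Finally I would pin down the endpoints. Let $a$ be an endpoint of $S$. Then $a\in S$ is a boundary point, and after shrinking $\varepsilon$ so that no $p_i$ vanishes on $(a-\varepsilon,a)\cup(a,a+\varepsilon)$, each $p_i$ has a constant sign on each of the two half-intervals, so exactly one of $(a-\varepsilon,a)$, $(a,a+\varepsilon)$ lies in $S$; say $(a-\varepsilon,a)\cap S=\emptyset$ (the other case is symmetric). If every $p_i$ were positive on $(a-\varepsilon,a)$ this interval would lie in $S$, a contradiction; hence some $p_{i_0}$ is strictly negative on $(a-\varepsilon,a)$. By continuity $p_{i_0}(a)=\lim_{t\to a^-}p_{i_0}(t)\le0$, while $p_{i_0}(a)\ge0$ because $a\in S$, so $p_{i_0}(a)=0$. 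By hypothesis (1), $a$ is a simple root of $p_{i_0}$, i.e.\ $(t-a)$ divides $p_{i_0}$ but $(t-a)^2$ does not. This establishes the remaining assertion of Case~2.

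The argument is elementary and I do not foresee a genuine obstacle; the only delicate points are the bookkeeping of degenerate inputs (empty families, constant polynomials, $S(p,q)=\emptyset$ or $\R$) and, in the last two steps, choosing $\varepsilon$ small enough that every $p_i$ has a constant sign on each punctured half-neighbourhood, which is exactly what lets hypotheses (1)--(2) be applied locally. An alternative route via the identities $\{p_i\ge0\}=\overline{\{p_i>0\}}$ from Lemma~\ref{lem:odd.mul} seems harder to push through, since controlling the isolated points of the intersection $\bigcap_i\overline{\{p_i>0\}}$ directly is less transparent than the sign-change analysis above.
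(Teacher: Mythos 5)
Your proof is correct and follows essentially the same route as the paper: split on whether the equality constraints force $S(p,q)$ into a finite variety, then use the sign-change of a simple root of exactly one $p_{i_0}$ (hypotheses (1)--(2) plus Lemma \ref{lem:chage.sign}) to show a half-interval around any boundary point lies in $S(p,q)$, ruling out isolated points and identifying each endpoint as a simple root of some $p_i$. Your treatment of the degenerate cases (identically zero $q_j$'s, constant $p_i$'s, $S(p,q)=\emptyset$ or $\R$) and your justification that an endpoint must actually be a root of some $p_i$ are more careful than the paper's, but the underlying argument is the same.
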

\begin{proof}
Consider the following two cases:
\begin{itemize}
\item Case 1: $\{q_1=\dots=q_l=0\}\ne \emptyset$. Then $S(p,q)\subset \{q_1=\dots=q_l=0\}$ is a finite union of isolated points.
\item Case 2: $\{q_1=\dots=q_l=0\}= \emptyset$. Then we get $S(p,q)=S(p,\emptyset)$. 
By the third statement of Lemma \ref{lem:properties.semi-algebraic.set}, $S(p,\emptyset)$ is a finite union of closed intervals and points.
Assume by contradiction that $S(p,\emptyset)$ has an isolated point $a$.
Then $a$ is a root of $p_j$ for some $j\in\{1,\dots,m\}$. 
By assumption, $a$ is a root of $p_j$ with odd multiplicity.
Lemma \ref{lem:chage.sign} yields that $p_j$ changes sign at $a$.
Then $[a,a+\varepsilon]$ or $[a-\varepsilon,a]$ is a subset of $\{p_j\ge 0\}$ for sufficiently small $\varepsilon>0$.
By assumption, $a$ is not a root of $p_i$ for any $i\in\{1,\dots,m\}\backslash \{j\}$.
It implies that $p_i(a)>0$, for $i\in\{1,\dots,m\}\backslash \{j\}$. 
Then we take $\varepsilon$ small enough such that 
\begin{equation}
\arraycolsep=1.4pt\def\arraystretch{.5}
[a-\varepsilon,a+\varepsilon]\subset \bigcap_{\begin{array}{cc}
\scriptstyle i=1 \\
 \scriptstyle i\ne j
\end{array}}^{m} \{p_i\ge 0\}\,.
\end{equation}
Thus $[a,a+\varepsilon]$ or $[a-\varepsilon,a]$ is a subset of $S(p,\emptyset)=\bigcap_{i=1}^m \{p_i\ge 0\}$.
It is impossible since $a$ is an isolated point of $S(p,\emptyset)$.
Thus $S(p,q)$ has no isolated point.
Assume that $a$ is an endpoint of $S(p,q)$. 
Then there exists $i\in\{1,\dots,m\}$ such that $p_i(a)=0$.
It implies that $(t-a)$ divides $p_i$.
By the first condition, $(t-a)^2$ does not divide $p_i$.
\end{itemize}
Hence the result follows.
\end{proof}

The following lemma follows from Nie's Nichtnegativstellensatz in \cite[Theorem 4.1]{nie2013polynomial}:

\begin{lemma}\label{lem:cert.finite.points}
Let $h\in\R[t]$, $p=\{p_1,\dots,p_m\}\subset \R[t]$, $q=\{q_1,\dots,q_l\}\subset \R[t]$.
Assume that $S(p,q)$ is a finite union of isolated points and $h$ is nonnegative on $S(p,q)$.
Then there exists $K\in\N$ such that for all $k\ge K$, $h\in T_k(p,q)[t]$.
\end{lemma}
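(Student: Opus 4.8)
The plan is to deduce the statement from Nie's Nichtnegativstellensatz \cite[Theorem 4.1]{nie2013polynomial} and then read off the truncation order $K$ by a degree count. First I would reformulate the hypotheses: a finite union of isolated points of $\R$ is simply a finite subset of $\R$, so $S(p,q)=\{t_1,\dots,t_N\}$ with $t_1,\dots,t_N\in\R$, and ``$h$ nonnegative on $S(p,q)$'' is the statement $h(t_i)\ge 0$ for $i=1,\dots,N$. In particular the basic closed set cut out by $p_1\ge 0,\dots,p_m\ge 0$, $q_1=\dots=q_l=0$ is finite, which is the regime to which \cite[Theorem 4.1]{nie2013polynomial} applies.

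Next I would invoke that theorem to obtain sums of squares $\sigma_\alpha\in\Sigma[t]$, $\alpha\in\{0,1\}^m$, and polynomials $\psi_1,\dots,\psi_l\in\R[t]$ with
\[
h=\sum_{\alpha\in\{0,1\}^m}\sigma_\alpha\,p^\alpha+\sum_{j=1}^l\psi_j q_j .
\]
(The degenerate case $S(p,q)=\emptyset$ does not even require Nie's theorem: by Stengle's Positivstellensatz one then has $-1\in\sum_{\alpha}\Sigma[t]\,p^\alpha+\sum_j\R[t]\,q_j$, so every polynomial, and $h$ in particular, has a representation of the above shape.) Then I would choose $K\in\N$ to be any integer with $2K\ge\deg(\sigma_\alpha p^\alpha)$ for every $\alpha$ and $2K\ge\deg(\psi_j q_j)$ for every $j$; the displayed identity then witnesses $h\in T_K(p,q)[t]$ directly from the definition \eqref{eq:preordering}. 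Finally, since increasing $k$ only relaxes the degree constraints in \eqref{eq:preordering}, we have $T_K(p,q)[t]\subseteq T_k(p,q)[t]$ for every $k\ge K$, so $h\in T_k(p,q)[t]$ for all $k\ge K$, as required.

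The substantive part is the middle step; the first and last are bookkeeping. The point I expect to require the most care is matching the hypothesis of \cite[Theorem 4.1]{nie2013polynomial} --- finiteness of the real variety associated with the constraint system --- to the present hypothesis that $S(p,q)$ is a finite union of isolated points. In the way this lemma is applied (via Lemma~\ref{lem:cond.cert}, where $p_1,\dots,p_m$ have only simple roots and no two of them share a root) this finiteness is genuinely available, so before quoting Nie's theorem I would spell out that reduction and confirm that its hypotheses are met.
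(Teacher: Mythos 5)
Your overall route coincides with the paper's: the paper offers no argument for this lemma beyond the sentence citing \cite[Theorem 4.1]{nie2013polynomial}, and your degree count for $K$ together with the monotonicity $T_K(p,q)[t]\subseteq T_k(p,q)[t]$ for $k\ge K$ is exactly the bookkeeping needed to pass from an untruncated representation to the truncated statement; the Stengle fallback for $S(p,q)=\emptyset$ is also fine.

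The concern you flag at the end is, however, a genuine gap and not merely a point requiring care: it cannot be closed under the lemma's stated hypotheses. Nie's theorem assumes that the real variety $\{q_1=\dots=q_l=0\}$ of the \emph{equality} constraints is finite, and finiteness of $S(p,q)$ does not imply this. Worse, the lemma as stated is false. Take $m=1$, $p_1=-t^2$, $q=\emptyset$, $h=t$. Then $S(p,q)=\{0\}$ is a single isolated point and $h(0)=0\ge 0$, but an identity $t=\sigma_0+\sigma_1\cdot(-t^2)$ with $\sigma_0,\sigma_1\in\Sigma[t]$ would force $\sigma_0(0)=0$, hence $t^2\mid\sigma_0$ (since $\sigma_0$ is a sum of squares), hence $t\cdot 1=t^2\cdot r(t)$ for some polynomial $r$, which is impossible; so $h\notin T_k(p,q)[t]$ for any $k$. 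The correct repair is the one you gesture at: strengthen the hypothesis to ``$\{q_1=\dots=q_l=0\}$ is finite'' (this forces $S(p,q)$ to be a finite union of isolated points and matches Nie's hypothesis verbatim), and note that this stronger hypothesis is what is actually available in the only place the lemma is invoked, namely Case 1 of the proof of Lemma~\ref{lem:cond.cert} as used in Lemma~\ref{lem:exact}, where the variety is nonempty and cut out by a not-identically-zero univariate polynomial. With that amendment your argument is complete; without it, no argument can succeed.
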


The following lemma restates Kuhlmann--Marshall--Schwartz's Nichtnegativstellensatz in \cite[Corollary 3.3]{kuhlmann2005positivity}:
\begin{lemma}\label{lem:certificate}
Let $p=\{p_1,\dots,p_m\}$ be a subset of $\R[t]$ and $q=\emptyset$ such that $S(p,q)=\{p_1>0,\dots,p_m>0\}$ is compact and has no isolated points. 
Then 
\begin{equation}
\{h\in\R[t]\,:\,h\ge 0\text{ on }S(p,q)\}=\bigcup_{k=1}^\infty T_k(p,q)[t]\,,
\end{equation}
if and only if, for each endpoint $a$ of $S(p,q)$, there exists
$i\in\{1,\dots,m\}$ such that $(t-a)$ divides $p_i$ but $(t-a)^2$ does not.
\end{lemma}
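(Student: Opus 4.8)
The plan is to deduce the equivalence from Kuhlmann--Marshall--Schwartz's one-dimensional saturation theorem \cite[Corollary 3.3]{kuhlmann2005positivity} after a short translation of notation. First I would observe that, because $q=\emptyset$ and the degree truncations in \eqref{eq:preordering} become vacuous as $k\to\infty$, the union $\bigcup_{k=1}^\infty T_k(p,q)[t]$ is precisely the preordering of $\R[t]$ generated by $p_1,\dots,p_m$, namely the set of all finite sums $\sum_{\alpha\in\{0,1\}^m}\sigma_\alpha p^\alpha$ with $\sigma_\alpha\in\Sigma[t]$. Hence the displayed identity asserts exactly that this finitely generated preordering is \emph{saturated}: it coincides with the cone of all polynomials nonnegative on $S(p,q)$.

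Next I would pin down the geometry. Since $S(p,q)$ is compact and, by hypothesis, has no isolated point, the third statement of Lemma \ref{lem:properties.semi-algebraic.set} shows that $S(p,q)$ is a finite union of nondegenerate closed bounded intervals; in particular it has finitely many endpoints and dense interior, which is precisely the regime in which \cite[Corollary 3.3]{kuhlmann2005positivity} operates. That corollary characterizes saturation of the preordering generated by a presentation $p_1,\dots,p_m$ of such a set through the requirement that at every endpoint $a$ some $p_i$ vanish to order exactly one. The remaining steps are routine bookkeeping: that the condition $(t-a)\mid p_i$ and $(t-a)^2\nmid p_i$ is exactly the order-one condition (immediate from Lemma \ref{lem:mul.root}), and that the sign normalization implicit in the cited statement --- $p_i$ changing sign from negative to positive at a left endpoint, and from positive to negative at a right endpoint --- is automatic here, because $p_i\ge 0$ on $S(p,q)$ together with Lemma \ref{lem:chage.sign} forces exactly that pattern.

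I expect the only genuine obstacle to be this translation: one must confirm that the hypotheses of \cite[Corollary 3.3]{kuhlmann2005positivity} match ours verbatim --- compactness of the basic closed set, the way its presentation by the $p_i$ enters the statement, and in particular that no additional genericity (for instance that the $p_i$ form the \emph{minimal} description, or that the interior be connected) is being tacitly assumed. Once the hypotheses are reconciled, both directions of the equivalence are immediate consequences of the cited corollary, so no independent argument is required; this is what later lets us combine it with Lemma \ref{lem:cond.cert}, where the endpoint condition is shown to hold automatically.
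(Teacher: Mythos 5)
Your proposal is correct and matches the paper's treatment: the paper offers no independent argument for this lemma, presenting it purely as a restatement of \cite[Corollary 3.3]{kuhlmann2005positivity}, which is exactly the reduction you carry out. Your additional bookkeeping (identifying $\bigcup_k T_k(p,q)[t]$ with the finitely generated preordering, and checking that the sign pattern at left and right endpoints is forced by $p_i\ge 0$ on $S(p,q)$ via Lemma \ref{lem:chage.sign}) is a more careful translation than the paper provides, but it is the same route.
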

The following lemma is a consequence of Lemmas \ref{lem:cert.finite.points} and \ref{lem:certificate}:
\begin{lemma}\label{lem:exact}
Let $g\in\R[t]$, $p=\{p_1,\dots,p_m\}\subset \R[t]$, $q=\{q_1,\dots,q_l\}\subset \R[t]$, and $\rho^\star(g,p,q)$ be as in \eqref{eq:pop2}.
Let the assumption of Lemma \ref{lem:cond.cert} hold.
Assume that $S(p,q)$ is compact.
Then there exists $K\in\N$ such that for all $k\ge K$, $g-\rho^\star(g,p,q)\in T_k(p,q)[t]$ and therefore $\rho_k(g,p,q)=\rho^\star(g,p,q)$.
\end{lemma}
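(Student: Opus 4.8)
The plan is to reduce the statement to the two Nichtnegativstellens\"atze already recalled (Lemmas~\ref{lem:cert.finite.points} and~\ref{lem:certificate}) by means of the structural dichotomy of Lemma~\ref{lem:cond.cert}. As a preliminary reduction I would observe that we may assume $S(p,q)\neq\emptyset$ (the case relevant to the applications, and the one in which $\rho^\star(g,p,q)\in\R$); then, since $S(p,q)$ is compact and $g$ is continuous, $\rho^\star:=\rho^\star(g,p,q)$ is finite and attained, so $h:=g-\rho^\star\in\R[t]$ is nonnegative on $S(p,q)$. It then suffices to produce a single $K\in\N$ with $h\in T_K(p,q)[t]$: because $T_k(p,q)[t]$ is nondecreasing in $k$ (raising $k$ only relaxes the degree bounds in~\eqref{eq:preordering}), this yields $h\in T_k(p,q)[t]$, hence $\rho_k(g,p,q)\ge\rho^\star$, for all $k\ge K$; and the reverse inequality $\rho_k(g,p,q)\le\rho^\star$ is immediate since every element of $T_k(p,q)[t]$ is $\ge 0$ on $S(p,q)$ (the sum-of-squares multipliers and the products $p^\alpha$ are nonnegative there, and the $q_j$ vanish). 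So $\rho_k(g,p,q)=\rho^\star(g,p,q)$ for all $k\ge K$.

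The hypotheses assumed here are precisely those of Lemma~\ref{lem:cond.cert}, so I would apply it to split into two cases. In the first case $S(p,q)$ is a finite union of isolated points; then Lemma~\ref{lem:cert.finite.points}, applied to the nonnegative polynomial $h$, directly produces such a $K$, and the preliminary reduction concludes.

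In the second case $S(p,q)$ is a finite union of closed intervals with no isolated point, and each endpoint $a$ of $S(p,q)$ is a simple zero of some $p_i$ (that is, $(t-a)\mid p_i$ but $(t-a)^2\nmid p_i$). Here I would first note, reading off the proof of Lemma~\ref{lem:cond.cert}, that the equality constraints play no role in this case, so $S(p,q)=S(p,\emptyset)=\{p_1\ge 0,\dots,p_m\ge 0\}$; this set is compact, has no isolated point, and has the endpoint property just recalled, so the relevant implication of Lemma~\ref{lem:certificate} (with $q=\emptyset$) gives that every polynomial nonnegative on $S(p,\emptyset)$ lies in $\bigcup_{k\ge 1}T_k(p,\emptyset)[t]$. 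Since $h\ge 0$ on $S(p,q)=S(p,\emptyset)$, there is $K\in\N$ with $h\in T_K(p,\emptyset)[t]\subseteq T_K(p,q)[t]$ (take all polynomial multipliers $\psi_j$ equal to $0$), and the monotonicity in $k$ again finishes the argument.

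I expect the only genuine obstacle to be the interface with Lemma~\ref{lem:certificate} in the second case: that lemma is stated only for $q=\emptyset$, so one must justify carefully that $S(p,q)$ really coincides with $S(p,\emptyset)$ there (which is exactly what the proof of Lemma~\ref{lem:cond.cert} shows) together with the inclusion $T_k(p,\emptyset)[t]\subseteq T_k(p,q)[t]$. The remaining ingredients (attainment from compactness, the elementary bound $\rho_k(g,p,q)\le\rho^\star(g,p,q)$, and monotonicity of $T_k(p,q)[t]$ in $k$) are routine.
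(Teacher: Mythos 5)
Your proposal is correct and follows essentially the same route as the paper: invoke the dichotomy of Lemma~\ref{lem:cond.cert}, then apply Lemma~\ref{lem:cert.finite.points} in the isolated-points case and Lemma~\ref{lem:certificate} in the union-of-intervals case. The extra details you supply (attainment of $\rho^\star$ by compactness, monotonicity of $T_k(p,q)[t]$ in $k$, the trivial bound $\rho_k\le\rho^\star$, and the reduction to $q=\emptyset$ via $T_k(p,\emptyset)[t]\subseteq T_k(p,q)[t]$) are all points the paper leaves implicit, and you handle them correctly.
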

\begin{proof}
By assumption, $g-\rho^\star(g,p,q)$ is nonnegative on $S(p,q)$.
Due to Lemma \ref{lem:cond.cert}, one of the two cases mentioned in this lemma occurs.
In the first case, we apply Lemma \ref{lem:cert.finite.points} to get $K\in\N$ satisfying that for all $k\ge K$, $g-\rho^\star(g,p,q)\in T_k(p,q)[t]$.
In the second case, Lemma \ref{lem:certificate} allows us to obtain such nonnegative integer $K$.
Hence the conclusion follows.
\end{proof}

\section{Products of linear factors occurring odd times}
\label{sec:prod.linear.factors.odd}

The following algorithm allows us to obtain the product of linear factors of a polynomial  occurring odd times:
\begin{algorithm}\label{alg:prod.odd.multiplicity} 
Products of linear factors occurring odd times.
\begin{itemize}
\item Input: Polynomial $p$ in $\R[t]$.
\item Output: Rational function $u_{i^\star}$ in single-variable $t$.
\end{itemize}
\begin{enumerate}
\item Set $u_0:=1$, $h_0:=p$, and $i=0$.
\item Set $q_i:=\gcd(h_i,h_i')$ and $w_i:=\gcd(q_i,q_i')$.
\item Update $u_{i+1}:=u_i\times \frac{h_iw_i}{q_i^2}$.
\item If $w_i$ is not constant, update $h_{i+1}:=w_i$, $i:=i+1$ and run again Steps 2, 3, 4. Otherwise, set $i^\star=i+1$ and stop.
\end{enumerate}
\end{algorithm}

\begin{lemma}\label{lem:prod.odd.multiplicity}
Let $u_{i^\star}$ be the output of Algorithm \ref{alg:prod.odd.multiplicity} with input $p$ as in \eqref{eq:factor.poly}.
Then there exists a constant $C\in\R$ such that
\begin{equation}\label{eq:prod.odd.multiplicity}
\arraycolsep=1.4pt\def\arraystretch{.5}
u_{i^\star}=C\times\prod_{\begin{array}{cc}
\scriptstyle i=1 \\
 \scriptstyle  m_i\text{ is odd}
\end{array}}^{s} (t-a_i)
\end{equation}
is a polynomial in $\R[t]$.
\end{lemma}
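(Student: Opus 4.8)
The plan is to prove by induction on the pass counter of Algorithm~\ref{alg:prod.odd.multiplicity} a precise description of the intermediate quantities $h_j$ and $u_{j+1}$ in terms of the factorization~\eqref{eq:factor.poly} of $p$. Write $p=h_0=c\prod_{i=1}^{s}(t-a_i)^{m_i}$ as in~\eqref{eq:factor.poly}, set $M:=\max_i m_i$, and write $f\doteq g$ to mean $f=\lambda g$ for some nonzero constant $\lambda\in\R$ (the Euclidean gcd is only determined up to such a unit, which is exactly why the statement carries the free constant $C$). I would carry the invariant: for every $j$ at which the loop body is executed,
\[
h_j\doteq\prod_{i\,:\,m_i\ge 2j+1}(t-a_i)^{\,m_i-2j},
\qquad
u_{j+1}\doteq\prod_{\substack{i\,:\,m_i\text{ odd}\\ m_i\le 2j+1}}(t-a_i).
\]
The single elementary fact driving everything is that subtracting $2$ preserves parity: the linear factors ``surviving'' in $h_j$ are precisely those whose original multiplicity is at least $2j+1$, and among them the ones of residual multiplicity $1$ (equivalently $m_i=2j+1$, necessarily odd) are exactly those harvested into $u_{j+1}$ on pass $j$ via the update $u_{j+1}=u_j\cdot\frac{h_jw_j}{q_j^2}$.

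For the base case $j=0$ the first relation is the factorization of $p$ itself, and since $q_0=\gcd(h_0,h_0')$ and $w_0=\gcd(q_0,q_0')$, Lemma~\ref{lem:mul.one} applied to $h_0$ gives $u_1=u_0\cdot\frac{h_0\gcd(q_0,q_0')}{q_0^2}\doteq\prod_{m_i=1}(t-a_i)$. For the inductive step, suppose the invariant holds at pass $j$ and the loop proceeds to pass $j+1$, i.e.\ $w_j$ is non-constant; then $q_j$ is non-constant as well (otherwise $w_j=\gcd(q_j,0)=q_j$ would be constant), and every $h_j$ the algorithm produces is non-constant, since it recurses only through $h_{j+1}:=w_j$ with $w_j$ non-constant (and $h_0=p$ has positive degree by hypothesis). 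Hence Lemma~\ref{lem:radical} applies to $h_j$, giving $q_j\doteq\prod_{m_i\ge 2j+2}(t-a_i)^{\,m_i-2j-1}$, and applies again to $q_j$, giving
\[
h_{j+1}=w_j=\gcd(q_j,q_j')\doteq\prod_{m_i\ge 2j+3}(t-a_i)^{\,m_i-2j-2}=\prod_{m_i\ge 2(j+1)+1}(t-a_i)^{\,m_i-2(j+1)},
\]
which is the first relation at $j+1$. For the second, Lemma~\ref{lem:mul.one} applied to $h_{j+1}$ gives $\frac{h_{j+1}w_{j+1}}{q_{j+1}^2}\doteq\prod_{m_i-2(j+1)=1}(t-a_i)=\prod_{m_i=2j+3}(t-a_i)$, so $u_{j+2}=u_{j+1}\cdot\frac{h_{j+1}w_{j+1}}{q_{j+1}^2}\doteq\prod_{m_i\text{ odd},\,m_i\le 2j+1}(t-a_i)\cdot\prod_{m_i=2j+3}(t-a_i)$; because $2j+2$ is even, the union of these index sets is exactly $\{\,i:m_i\text{ odd},\,m_i\le 2j+3\,\}$, completing the induction.

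It remains to handle termination and read off the output. Whenever $h_j$ is non-constant one has $\deg h_j'=\deg h_j-1$, hence $\deg h_{j+1}=\deg w_j\le\deg q_j\le\deg h_j-1$, so the passes strictly decrease $\deg h_j$ and the loop halts; equivalently, once $2j\ge M$ the product describing $w_j$ is empty and $w_j$ is constant. Let $j^\star$ be the first pass with $w_{j^\star}$ constant; then $M\le 2j^\star+2$, the algorithm sets $i^\star=j^\star+1$, and the invariant gives $u_{i^\star}=u_{j^\star+1}\doteq\prod_{m_i\text{ odd},\,m_i\le 2j^\star+1}(t-a_i)$. Since every odd multiplicity is $\le M\le 2j^\star+2$ and hence $\le 2j^\star+1$, this product equals $\prod_{m_i\text{ odd}}(t-a_i)$, which is~\eqref{eq:prod.odd.multiplicity} with $C$ the cumulative nonzero constant. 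Finally $u_{i^\star}\in\R[t]$: every division performed by the algorithm is exact in $\R[t]$ by Lemma~\ref{lem:mul.one}, and by the conjugate-pairing clause of Lemma~\ref{lem:factor.poly} a non-real $a_i$ of odd multiplicity has its conjugate among the $a_k$ of the same (odd) multiplicity, so $\prod_{m_i\text{ odd}}(t-a_i)$ is invariant under complex conjugation. The only genuinely delicate point is the bookkeeping of the invariant --- in particular that no odd multiplicity is ever skipped (which uses that $2j+2$ is even) and that Lemmas~\ref{lem:radical} and~\ref{lem:mul.one} are only ever invoked on polynomials of positive degree (guaranteed because the loop recurses only on the non-constant $w_j$, while a constant $q_j$ forces the loop to halt before Lemma~\ref{lem:radical} would be applied to it); everything else is a mechanical consequence of Lemmas~\ref{lem:radical} and~\ref{lem:mul.one}.
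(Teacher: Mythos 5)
Your proof is correct and follows essentially the same route as the paper's: the invariant that $h_j$ carries the surviving linear factors with residual multiplicities $m_i-2j$, so that each pass harvests via Lemma \ref{lem:mul.one} exactly the factors of original multiplicity $2j+1$, with termination forced by the strict decrease of $\deg h_j$. You merely make explicit (through the formal induction, the unit ambiguity of the gcd, and the positive-degree checks) what the paper's ``repeating this process'' leaves informal.
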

\begin{proof}
Let us prove that Algorithm \ref{alg:prod.odd.multiplicity} terminates after a finite number of steps.
By Lemma \ref{lem:mul.one}, the first two steps produce $\frac{h_0w_0}{q_0^2}$ as the right-hand side of \eqref{eq:prod.mul.one}.
It is the product of linear factors of $p$ occurring once and thus is a polynomial.
In addition, Lemma \ref{lem:radical} yields
\begin{equation}
\arraycolsep=1.4pt\def\arraystretch{.5}
w_0=\prod_{\begin{array}{cc}
\scriptstyle i=1 \\
 \scriptstyle  m_i>1
\end{array}}^{s} (t-a_i)^{m_i-2}
\end{equation}
It implies $\deg(w_0)\le \deg(h_0)-2s$.
The third step gives $u_1$ as the right-hand side of \eqref{eq:prod.mul.one}.
In the fourth step, we obtain $h_1=w0$, which gives $\deg(h_1)=\deg(w_0)\le \deg(h_0)-2s$.
Repeating this process, we get $\frac{h_iw_i}{q_i^2}$ as the product of linear factors of $p$ with multiplicity $2i+1$ and $\deg(h_{i+1})< \deg(h_i)<\deg(p)$.
Then $u_{i+1}$ is the product of linear factors of p occurring $1,3,\dots,2i+1$ times.
Moreover, Algorithm \ref{alg:prod.odd.multiplicity} cannot run forever since $\deg(h_{i+1})< \deg(h_i)<\deg(p)$.
Hence the result follows.
\end{proof}
\begin{remark}\label{rem:complex.prod.odd.multiplicity}
Thanks to the complexity given in Remark \ref{rem:complex.gcd}, Algorithm \ref{alg:prod.odd.multiplicity} with input $p$ of degree $d$ has complexity $O(d^3)$.
\end{remark}
Given a polynomial $p\in\R[t]$, denote by $\lc(p)$ the leading coefficient of $p$.
\begin{definition}\label{def:p.tilde}
For a given polynomial $p$ in $\R[t]$ with decomposition \eqref{eq:factor.poly},
we define polynomial $\widetilde p\in\R[t]$ by
\begin{equation}
\widetilde p:=\begin{cases}
\frac{\lc(p)}{\lc(u_{i^\star})}\times u_{i^\star}&\text{if }u_{i^\star}\ne 0\,,\\
0&\text{otherwise}\,.
\end{cases}
\end{equation}
where $u_{i^\star}$ is the output of Algorithm \ref{alg:prod.odd.multiplicity} with input $p$.
\end{definition}
By definition, $\widetilde p$ has the same leading coefficient as $p$.
Roughly speaking, polynomial $\widetilde p$ enables us to remove all linear factors of a polynomial $p$ occurring even times.

The following lemma states some properties of polynomial $\widetilde p$ needed for the later results:
\begin{lemma}\label{lem:properties}
Let $p$ be polynomial in $\R[t]$ with decomposition \eqref{eq:factor.poly}.
The following statements hold:
\begin{enumerate}
\item The polynomial $\widetilde p$ is the right-hand side of \eqref{eq:prod.odd.multiplicity}. Consequently, $\widetilde p$ has only simple roots.
\item The polynomial $p$ can be decomposed as $p=\widetilde p\times w$, where
\begin{equation}
\arraycolsep=1.4pt\def\arraystretch{.5}
w=\prod_{\begin{array}{cc}
\scriptstyle j=1 \\
 \scriptstyle  m_j\text{ is even}
\end{array}}^{s} (t-a_j)^{m_j} \times \prod_{\begin{array}{cc}
\scriptstyle i=1 \\
 \scriptstyle  m_i\text{ is odd}
\end{array}}^{s} (t-a_i)^{m_i-1}
\end{equation}
is a nonnegative polynomial in $\R[t]$.
Consequently, $p(t)>0$ if and only if $\widetilde p(t)>0$.
\end{enumerate}
\end{lemma}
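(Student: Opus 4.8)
## Proof proposal for Lemma \ref{lem:properties}

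The plan is to reduce both statements directly to the combinatorial identity for $\widetilde p$ coming from Algorithm \ref{alg:prod.odd.multiplicity}, which has already been established in Lemma \ref{lem:prod.odd.multiplicity}. First I would handle statement (1). By Lemma \ref{lem:prod.odd.multiplicity}, the output $u_{i^\star}$ of the algorithm with input $p$ equals $C\prod_{m_i\text{ odd}}(t-a_i)$ for some nonzero constant $C\in\R$ (nonzero because $p$ has positive degree, so it has at least one root of odd multiplicity — indeed, if every $m_i$ were even then $p$ would be a perfect square times a constant, which is fine, but then $u_{i^\star}$ would be the empty product, i.e. a nonzero constant; I should be slightly careful here and note that $\widetilde p$ is then that nonzero constant times $\lc(p)/\lc(u_{i^\star})$, which is still a degree-$0$ polynomial with ``only simple roots'' vacuously). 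Rescaling by $\lc(p)/\lc(u_{i^\star})$ as in Definition \ref{def:p.tilde} does not change the root set, only the leading coefficient, so $\widetilde p = \tfrac{\lc(p)}{\lc(u_{i^\star})}\,C\prod_{m_i\text{ odd}}(t-a_i)$, which is exactly the right-hand side of \eqref{eq:prod.odd.multiplicity} with the constant absorbed. Since the $a_i$ are pairwise distinct by Lemma \ref{lem:factor.poly}, $\widetilde p$ has only simple roots.

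For statement (2), I would simply divide the factorization \eqref{eq:factor.poly} of $p$ by that of $\widetilde p$. Writing $p = c\prod_{i=1}^s (t-a_i)^{m_i}$ and $\widetilde p = c'\prod_{m_i\text{ odd}}(t-a_i)$ with $c' = c$ (they share a leading coefficient by construction — this needs the observation that the leading coefficient of $c\prod(t-a_i)^{m_i}$ is $c$ and similarly for $\widetilde p$), the quotient $w = p/\widetilde p$ is
\[
w = \prod_{m_i\text{ even}} (t-a_i)^{m_i}\;\times\;\prod_{m_i\text{ odd}} (t-a_i)^{m_i-1},
\]
which is the stated formula. Every exponent here is even: $m_i$ is even in the first product, and $m_i-1$ is even in the second since $m_i$ is odd. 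Hence $w$ is a product of squares of real linear factors paired with squares of complex-conjugate linear factors — more precisely, $w = \bigl(\prod_i (t-a_i)^{\lfloor m_i/2\rfloor\cdot[\text{correction}]}\bigr)^2$; cleanly, since all exponents are even and the non-real roots come in conjugate pairs with equal multiplicity (Lemma \ref{lem:factor.poly}), $w$ is the square of a real polynomial, hence $w(t)\ge 0$ for all real $t$. Finally, $p = \widetilde p\, w$ with $w\ge 0$ gives $p(t)>0 \iff \widetilde p(t)>0$ whenever $w(t)\ne 0$; at the finitely many real zeros of $w$ (which are precisely the real $a_i$) both $p(t)$ and $\widetilde p(t)$... well, $\widetilde p(a_i)=0$ exactly when $m_i$ is odd, in which case $p(a_i)=0$ too, and when $m_i$ is even $\widetilde p(a_i)\ne 0$ but then $p(a_i)=0$ while $w(a_i)=0$ — so I should phrase the equivalence as: for every $t$, $p(t)>0 \Leftrightarrow \widetilde p(t)>0$, checked by cases on whether $t$ is a real root of $p$.

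The only genuinely delicate point is the last one: the naive statement ``$p = \widetilde p\cdot w$ with $w\ge 0$, therefore $p>0 \iff \widetilde p>0$'' is false at points where $w$ vanishes, so I expect the main obstacle to be arguing the sign equivalence carefully at the real roots of $p$. The clean resolution is to observe that $\{p>0\}$ and $\{\widetilde p>0\}$ are both open, that $w>0$ off the real roots of $p$, and that at a real root $a_i$ of $p$ neither $p(a_i)>0$ nor (since $w\ge 0$ forces $\widetilde p$ to keep $p$'s sign on a punctured neighborhood, and $\widetilde p(a_i)\widetilde p$ ... ) — more simply, $p(a_i)=0$ for every real root, so the claimed equivalence $p(t)>0\iff\widetilde p(t)>0$ only needs to be verified at real $t$ that are \emph{not} roots of $p$, where $w(t)>0$ and the equivalence is immediate. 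Everything else is the bookkeeping of matching exponents in a unique factorization, for which I would invoke Lemma \ref{lem:factor.poly} for uniqueness and for the conjugate-pairing that guarantees $w\in\R[t]$ and $w\ge 0$.
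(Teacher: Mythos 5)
Your treatment of statement (1) and of the factorization and nonnegativity of $w$ in statement (2) is correct and follows essentially the same route as the paper's (much terser) argument: both reduce to Lemma \ref{lem:prod.odd.multiplicity}, the uniqueness of the decomposition \eqref{eq:factor.poly}, and the conjugate-pairing guaranteed by Lemma \ref{lem:factor.poly}; your observation that the normalization in Definition \ref{def:p.tilde} makes $\lc(\widetilde p)=\lc(p)$, so that $w=p/\widetilde p$ is monic with all exponents even, is the right bookkeeping.

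The gap is in the final ``consequently.'' You correctly isolate the delicate case --- a real root $a$ of $p$ of even multiplicity, where $w(a)=0$ and $p(a)=0$ but $\widetilde p(a)\ne 0$ --- and then dismiss it by saying the equivalence ``only needs to be verified at real $t$ that are not roots of $p$.'' That is not a valid move: a biconditional must hold at every $t$, and at precisely these points it can fail. Concretely, for $p=t^2$ Algorithm \ref{alg:prod.odd.multiplicity} outputs a constant and Definition \ref{def:p.tilde} gives $\widetilde p=1$, so at $t=0$ one has $\widetilde p(0)=1>0$ while $p(0)=0$; hence $\{p>0\}=\R\setminus\{0\}\ne\R=\{\widetilde p>0\}$ and the asserted equivalence is false. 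No argument can close this step as stated: what holds unconditionally is only the implication $p(t)>0\Rightarrow\widetilde p(t)>0$ (since $w\ge 0$ and $p=\widetilde p\,w$), while the converse requires $w(t)\ne 0$. The two sets $\{p>0\}$ and $\{\widetilde p>0\}$ agree only up to the finitely many real roots of $p$ of even multiplicity (so in particular they have the same closure, which is what the subsequent lemmas actually use). The paper's one-line proof glosses over the same point, so the defect is really in the statement of the lemma; but as a proof of the lemma as written, your final step does not go through, and the subsequent appeal to this equivalence in Lemma \ref{lem:alternative.rep} inherits the same counterexample $B=\{t^2>0\}$.
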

\begin{proof}
The first statement is due to Lemma \ref{lem:prod.odd.multiplicity}.
The second statement relies on the first statement and the decomposition \eqref{eq:factor.poly}. 
It is clear that $w\in\R[t]$ thanks to the second statement of Lemma \ref{lem:factor.poly}.
The non-negativity of $w$ is because $w$ is the product of the squares of linear factors of $p$.
\end{proof}
\section{Replacement of strict inequalities by non-strict ones}
\label{sec:replace.strict.ineq}
In this section, we demonstrate that replacing strict inequalities defining a ``nice'' elementary semi-algebraic set with non-strict ones allows us to obtain the closure of this set.

\begin{definition}\label{def:closure}
For a given elementary semi-algebraic set $B\subset\R$ defined as in \eqref{eq:elem.semial.set.uni}, we define elementary semi-algebraic set $\widetilde B\subset\R$ by replacing strict inequalities $p_i(t)>0$ determining $B$ with non-strict inequalities $\widetilde p_i(t)\ge 0$, i.e.,
\begin{equation}\label{eq:replace.ineuality}
\widetilde B = \{t\in\R\,:\,\widetilde p_1(t)\ge 0,\dots,\widetilde p_m(t)\ge 0\,,\,q_1(t)=\dots=q_l(t)=0\}\,.
\end{equation}
\end{definition}
\begin{remark}
The set $\widetilde B$ in \eqref{eq:replace.ineuality} is  semi-algebraic since it can be written as
\begin{equation}
\widetilde B=\bigcup_{*\in\{>,=\}^m}\{t\in\R\,:\,\widetilde p_1(t)*_1 0,\dots,\widetilde p_m(t) *_m 0\,,\,q_1(t)=\dots=q_l(t)=0\}\,.
\end{equation}
\end{remark}
\begin{lemma}\label{lem:alternative.rep}
Let $B\subset \R$ be an elementary semi-algebraic set  defined as in \eqref{eq:elem.semial.set.uni.simple}.
Then 
\begin{equation}\label{eq:alternative.rep}
B = \{\widetilde p_1> 0,\dots,\widetilde p_m>0\,,\,q_1=\dots=q_l=0\}\,.
\end{equation}
\end{lemma}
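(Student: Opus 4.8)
The plan is to reduce the claimed set equality to a pointwise equivalence between the strict inequalities defining the two sets, which is precisely the content of the second statement of Lemma \ref{lem:properties}. First I would fix an arbitrary index $i\in\{1,\dots,m\}$ and recall that, by Lemma \ref{lem:properties}(2), the polynomial $p_i$ factors as $p_i=\widetilde p_i\times w_i$ with $w_i$ a nonnegative polynomial in $\R[t]$; consequently $p_i(t)>0$ holds if and only if $\widetilde p_i(t)>0$. Here I use that each $p_i$ has positive degree, which is the standing hypothesis on the inequality polynomials defining an elementary semi-algebraic set, so that the decomposition \eqref{eq:factor.poly} and Algorithm \ref{alg:prod.odd.multiplicity} apply and $\widetilde p_i$ is well defined.

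Next I would chain these equivalences over all $i$ while leaving the equality constraints untouched: for any $t\in\R$, the condition $t\in B$ means $p_1(t)>0,\dots,p_m(t)>0$ together with $q_1(t)=\dots=q_l(t)=0$; replacing each ``$p_i(t)>0$'' by the equivalent ``$\widetilde p_i(t)>0$'' and keeping the same equalities yields exactly the defining condition of the right-hand side of \eqref{eq:alternative.rep}. Hence the two sets have the same elements, which proves the lemma.

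I do not expect a genuine obstacle: the statement is essentially a direct corollary of Lemma \ref{lem:properties}, and the only bookkeeping point is that the equivalence must be applied coordinatewise, with the variety $\{q_1=\dots=q_l=0\}$ playing no role in the substitution. If one wishes to be fully scrupulous, the degenerate subcase where some $p_i$ is a nonzero constant (so that \eqref{eq:factor.poly} does not literally apply) is handled trivially, since then $\{p_i>0\}$ is either $\R$ or $\emptyset$ and one may take $\widetilde p_i=p_i$; under the convention that all defining inequality polynomials have positive degree this subcase does not arise.
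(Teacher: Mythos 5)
Your proof is correct and is essentially the same as the paper's: both reduce the set equality to the pointwise equivalence $p_i(t)>0\iff\widetilde p_i(t)>0$ furnished by the second statement of Lemma \ref{lem:properties}, applied to each inequality while leaving the equalities untouched. Your extra remark on the constant-polynomial subcase is harmless bookkeeping not present in (and not needed by) the paper's argument.
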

\begin{proof}
By using the second statement of Lemma \ref{lem:properties}, we get the equivalence: $p_i(t)>0$ if and only if $\widetilde p_i(t)>0$.
Hence the result follows. 
\end{proof}
The following example given in \cite[Section 2.1]{basu1999new} shows that replacing strict inequalities in elementary semi-algebraic set $B$ without removing the factors of the inequality polynomials occurring even times might not allow for the closure of $B$ to be obtained:
\begin{example}\label{exam:even.mul}
Let $B=\{t\in\R\,:\,t^2(t-1)> 0\}$.
Then $B=(1,\infty)$.
Replacing the strict inequality in $B$ by a non-strict inequality, we obtain $\hat B=\{t\in\R\,:\,t^2(t-1)\ge 0\}=\{0\}\cup [1,\infty)$, which is not the closure of $B$.
However, by definition, $\widetilde B=\{t\in\R\,:\,t-1\ge 0\}=[1,\infty)$ is exactly the closure of $B$.
\end{example}

The following example indicates the case where $\widetilde B$ is not the closure of an elementary semi-algebraic set $B$:
\begin{example}\label{exam:cl.intersection}
Let $B=\{p_1>0,p_2>0\}$ with $p_1=-t(t-1)$ and $p_2=-t(t+1)$. 
Then $\{p_1>0\}=(0,1)$ and $\{p_2\ge 0\}=(-1,0)$ yields $B=\{p_1>0\}\cap \{p_2\ge 0\}=\emptyset$.
However, since $\widetilde p_j=p_j$,  
$\{\widetilde p_1>0\}=[0,1]$ and $\{\widetilde p_2\ge 0\}=[-1,0]$ yields $\widetilde B=\{\widetilde p_1>0\}\cap \{\widetilde p_2\ge 0\}=\{0\}$. Thus $\overline{B}=\emptyset\ne \{0\}=\widetilde B$.
\end{example}

To address the issues in Examples \ref{exam:even.mul} and \ref{exam:cl.intersection}, we need the following lemma:
\begin{lemma}\label{lem:closure}
Let $B\subset\R$ be an elementary semi-algebraic set of the form \eqref{eq:elem.semial.set.uni.simple}.
Suppose that the following conditions hold:
\begin{enumerate}
\item Any couple of inequality polynomials defining $B$ have no common real root with odd multiplicity.
\item Any inequality polynomial defining $B$ has no  root in $\{q_1=\dots=q_l=0\}$ with odd multiplicity.
\end{enumerate} 
Then $\widetilde B$ is the closure of $B$, i.e., $\widetilde  B=\overline{B}$.
\end{lemma}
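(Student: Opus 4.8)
The plan is to reduce the claim to the single‑inequality results on closures of one‑dimensional semi‑algebraic sets (Lemmas \ref{lem:closure.strict} and \ref{lem:closure.strict.intersec}), after replacing each $p_i$ by its ``odd part'' $\widetilde p_i$. First I would invoke Lemma \ref{lem:alternative.rep} to write
\begin{equation*}
B=\Bigl(\bigcap_{i=1}^m\{\widetilde p_i>0\}\Bigr)\cap V,\qquad V:=\{q_1=\dots=q_l=0\},
\end{equation*}
while by Definition \ref{def:closure} we have $\widetilde B=\bigl(\bigcap_{i=1}^m\{\widetilde p_i\ge 0\}\bigr)\cap V$. The heart of the proof is the translation of the two hypotheses: by Lemma \ref{lem:properties}, each $\widetilde p_i$ has only simple roots, and its real roots are exactly the real roots of $p_i$ of odd multiplicity. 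Hence hypothesis (1) says precisely that $\widetilde p_i$ and $\widetilde p_j$ have no common real root whenever $i\ne j$, and hypothesis (2) says precisely that $\widetilde p_i$ has no real root lying in $V$. These are exactly the hypotheses required by Lemmas \ref{lem:closure.strict} and \ref{lem:closure.strict.intersec}.

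Next I would split according to the shape of $V$. If $l=0$ or every $q_j$ is identically zero, then $V=\R$, and Lemma \ref{lem:closure.strict} applied to the $\widetilde p_i$ gives $\overline B=\overline{\bigcap_i\{\widetilde p_i>0\}}=\bigcap_i\{\widetilde p_i\ge 0\}=\widetilde B$. Otherwise some $q_j$ is a nonzero polynomial, so $V$ is a finite subset of $\R$ (possibly empty); then Lemma \ref{lem:closure.strict.intersec} yields $\overline B=\overline{\bigcap_i\{\widetilde p_i>0\}\cap V}=\overline{\bigcap_i\{\widetilde p_i>0\}}\cap V$, and a second use of Lemma \ref{lem:closure.strict} turns the right‑hand side into $\bigl(\bigcap_i\{\widetilde p_i\ge 0\}\bigr)\cap V=\widetilde B$. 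In both cases $\overline B=\widetilde B$, as wanted.

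A little bookkeeping is needed for degenerate data, which I would dispatch at the outset: if some $\widetilde p_i$ is a constant it is nonzero (because $p_i$ has positive degree), and if it is positive the constraints $\widetilde p_i>0$ and $\widetilde p_i\ge 0$ are vacuous and may be deleted from both $B$ and $\widetilde B$, whereas if it is negative then $B=\widetilde B=\emptyset$ and there is nothing to prove; the case $V=\emptyset$ is likewise trivial. I expect the only real difficulty to be conceptual: making the translation of hypotheses (1)--(2) into statements about the $\widetilde p_i$ completely precise — in particular verifying, via Lemma \ref{lem:properties}, that passing from $p_i$ to $\widetilde p_i$ changes neither the open set $\{p_i>0\}$ nor the set of real roots at which a sign change occurs — so that Lemmas \ref{lem:closure.strict} and \ref{lem:closure.strict.intersec} may be applied verbatim.
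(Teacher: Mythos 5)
Your proposal is correct and takes essentially the same route as the paper's proof: rewrite $B$ via Lemma \ref{lem:alternative.rep}, use Lemma \ref{lem:properties} to translate hypotheses (1)--(2) into the statements that the $\widetilde p_i$ are simple-rooted, pairwise without common real roots, and root-free on $V$, and then apply Lemmas \ref{lem:closure.strict} and \ref{lem:closure.strict.intersec}. Your explicit case split on whether $V$ is all of $\R$ or a finite set, and your treatment of constant $\widetilde p_i$, are small points of care that the paper's proof passes over silently.
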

\begin{proof}
By Lemma \ref{lem:alternative.rep}, equality \eqref{eq:alternative.rep} holds.
For $i=1,\dots,m$, let $a_{i,1},\dots,a_{i,r_i}$ be the  real roots of $p_i$ with odd multiplicities.
Lemma \ref{lem:properties} yields that $a_{i,1},\dots,a_{i,r_i}$ are all simple real roots of $\widetilde p_i$.
By assumption, for every $(i,r)\in\{1,\dots,m\}^2$ satisfying $i\ne j$, $\widetilde p_i$ and $\widetilde p_r$ have no common root. 
From this, Lemma \ref{lem:closure.strict} yields 
\begin{equation}\label{eq:cl.union}
\overline{\bigcap_{i=1}^m \{\widetilde p_i>0\}}=\bigcap_{i=1}^m \{\widetilde p_i\ge 0\}\,.
\end{equation}
Set $V=\{q_1=\dots=q_l=0\}$. 
By assumption, each $\widetilde p_i$ has no root in $V$.
From this, Lemma \ref{lem:closure.strict.intersec} implies 
\begin{equation}
\overline{\bigcap_{i=1}^m \{\widetilde p_i>0\}}\cap V=\overline{\bigcap_{i=1}^m \{\widetilde p_i>0\}\cap V}\,.
\end{equation}
Combining this with \eqref{eq:cl.union}, \eqref{eq:alternative.rep}, and \eqref{eq:replace.ineuality}, we obtain the result.
\end{proof}

Although not all elementary semi-algebraic sets have properties as in Lemma \ref{lem:closure}, we can always decompose each elementary semi-algebraic set as the union of a finite number of elementary semi-algebraic sets with these properties:
\begin{lemma}\label{lem:alg.remove.common.root}
Let $B\subset\R$ be an elementary semi-algebraic set.
Then the is a symbolic algorithm which produces elementary semi-algebraic subsets $B_1,\dots,B_s$ of $\R$ 
 such that $B=\bigcup_{j=1}^sB_j$ and 
the following two statements hold:
\begin{enumerate}
\item For $j=1,\dots,s$, each inequality polynomial defining $B_j$ has only simple roots.
\item For $j=1,\dots,s$, any inequality polynomial defining $B_j$ has no root in the variety determined by the equality polynomials defining $B_j$.
\item For $j=1,\dots,s$, any couple of inequality polynomials defining $B_j$ has no common root.  
\end{enumerate} 
\end{lemma}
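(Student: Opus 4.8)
The plan is to impose conditions~1, 2 and~3 in that order, since the first two are exactly the standing hypotheses needed by the tool that produces the third. Write $B=\{p_1>0,\dots,p_m>0\,,\,q_1=\dots=q_l=0\}$ as in \eqref{eq:elem.semial.set.uni.simple}. For condition~1, apply Lemma~\ref{lem:alternative.rep} to rewrite $B=\{\widetilde p_1>0,\dots,\widetilde p_m>0\,,\,q_1=\dots=q_l=0\}$, where $\widetilde p_i$ is as in Definition~\ref{def:p.tilde} and is obtained symbolically by Algorithm~\ref{alg:prod.odd.multiplicity}; by Lemma~\ref{lem:properties} each $\widetilde p_i$ has only simple roots. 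If some $\widetilde p_i$ is a constant, it is either strictly positive (drop that inequality) or not (then $B=\emptyset$ and the conclusion is trivial), so from now on I assume every inequality polynomial has positive degree. For condition~2, apply Lemma~\ref{lem:no.root.variety2} to this description to obtain equality polynomials $\check q_1,\dots,\check q_l$ with $B=\{\widetilde p_1>0,\dots,\widetilde p_m>0\,,\,\check q_1=\dots=\check q_l=0\}$ and with no $\widetilde p_i$ vanishing on $V:=\{\check q_1=\dots=\check q_l=0\}$; the inequality polynomials are left untouched, so condition~1 persists.

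For condition~3 I would repeatedly apply Lemma~\ref{lem:remove.common.root} as a splitting step. Keep the invariant (which holds for the piece we have after Steps~1--2) that each piece has the form $\{P_1>0,\dots,P_N>0\,,\,\check q_1=\dots=\check q_l=0\}$ with every $P_i$ of positive degree, having only simple roots, and having no root in $V$ --- i.e.\ hypotheses~1--2 of Lemma~\ref{lem:remove.common.root} always hold. If some piece has inequality polynomials $P_a\neq P_b$ with $\gcd(P_a,P_b)$ nonconstant, permute the $P_i$'s so that $(a,b)$ sits in positions $(s+1,r+1)$ with $0\le s<r<N$ (possible since $N\ge2$), and split that piece into $B_+\cup B_-$ as in Lemma~\ref{lem:remove.common.root}: in each child, $P_a$ is replaced by $u_0=\gcd(P_a,P_b)$ and $P_a/u_0$, and $P_b$ by $u_0$ and $P_b/u_0$, with a positive constant among these being dropped and a negative one making the child empty. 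Conclusions~1, 2, 5, 7, 8 and the decomposition $B_+\cup B_-$ in Lemma~\ref{lem:remove.common.root} use only its hypotheses~1--2 and the definition of $u_0$, so they apply here; hypotheses~3--4 serve only to carry previously cleared pairs forward and are unnecessary for the present argument. Thus each split preserves the invariant, and $u_0,P_a/u_0,P_b/u_0$ are pairwise without common root.

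The delicate point is termination, because a split \emph{raises} the number of inequality polynomials. I would track the potential $\Psi=\sum_{i=1}^N\deg P_i$ of a piece: replacing $P_a,P_b$ by $u_0,P_a/u_0,P_b/u_0$ changes $\Psi$ by $-\deg u_0\le-1$ (here $u_0$ is nonconstant), and $\Psi\ge0$, so every branch of the tree of splits is finite; since each split has two children, the procedure halts after producing finitely many leaf pieces $B_1,\dots,B_s$. By construction $B=\bigcup_{j=1}^s B_j$; no $B_j$ has a pair of inequality polynomials with nonconstant $\gcd$ (equivalently, with a common complex root), which is condition~3; and the invariant gives conditions~1 and~2. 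All steps are symbolic --- the transformations of Lemmas~\ref{lem:alternative.rep} and~\ref{lem:no.root.variety2}, and the gcd computations, divisions, and coprimality tests in the splits --- so the construction is a symbolic algorithm, as required. The genuine content is the choice of potential: counting common-root pairs would not obviously decrease, whereas the degree sum does; the side issue that $u_0$ can equal $P_a$ (or $P_b$) up to a scalar, yielding a constant inequality polynomial, is absorbed by the ``drop or empty'' rule and leaves the degree drop intact.
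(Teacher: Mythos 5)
Your proposal is correct and follows essentially the same route as the paper: Lemma \ref{lem:alternative.rep} together with Lemma \ref{lem:properties} for simple roots, Lemma \ref{lem:no.root.variety2} for the variety condition, and iterated application of Lemma \ref{lem:remove.common.root} for pairwise coprimality. Your degree-sum potential $\Psi$ is a welcome refinement: the paper merely asserts termination via the index schedule ``$s=1,\dots,r-1$ and $r=1,\dots,m-1$'' even though each split increases the number of inequality polynomials, so your argument makes explicit a point the paper leaves implicit.
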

\begin{proof}
Suppose that $B$ is of the form \eqref{eq:elem.semial.set.uni.simple}.
By Lemma \ref{lem:alternative.rep}, equality \eqref{eq:alternative.rep} holds.
Using Lemma \ref{lem:no.root.variety2}, we get $\check q_j\in\R[t]$ such that
\begin{equation}
\widetilde B = \{t\in\R\,:\,\widetilde p_1(t)\ge 0,\dots,\widetilde p_m(t)\ge 0\,,\,\check q_1(t)=\dots=\check q_l(t)=0\}\,.
\end{equation}
and each $\widetilde p_i$ has no root in $\{\check q_1(t)=\dots=\check q_l(t)=0\}$.
Note that the first statement of Lemma \ref{lem:properties} implies that each $\widetilde p_i$ has only simple roots.
Applying Lemma \ref{lem:remove.common.root} several times with $s=1,\dots,r-1$ and $r=1,\dots,m-1$,
we obtain $B_1,\dots,B_s$ as in the conclusion.  
\end{proof}
\begin{remark} \label{rem:complex.alg.remove.common.root}
Thanks to Remarks \ref{rem:complex.prod.odd.multiplicity}, \ref{rem:complex.no.root.variety2}, and \ref{rem:complex.remove.common.root},
the algorithm mentioned in Lemma \ref{lem:alg.remove.common.root} has complexity 
\begin{equation}
l+(m+1)(l+1)\times O(d^2)+m\times O(d^3)+m!\times(O(d^2)+2)\,,
\end{equation}
where $B$ is of the form \eqref{eq:elem.semial.set.uni.simple} and $d$ is the upper bound on the degrees of $p_i$s and $q_j$s.
\end{remark}

\section{Main algorithm and application to polynomial optimization}
\label{sec:main.alg.app.poly.opt}
The following algorithm enables us to obtain semidefinite descriptions of the closures of the images of semi-algebraic sets under polynomials:
\begin{algorithm}\label{alg:closure.image.semi} 
Closures of the images of semi-algebraic sets under polynomials.
\begin{itemize}
\item Input: Semi-algebraic set $S\subset\R^n$ and polynomial $f\in\R[x]$.
\item Output: Semi-algebraic description of $Q\subset\R$.
\end{itemize}
\begin{enumerate}
\item Compute $p_{i,j},q_{k,j}\in\R[t]$ such that $f(S)=\bigcup_{j=1}^s B_j$ with 
\begin{equation}
B_j=\{p_{1,j}>0,\dots,p_{m_j,j}>0\,,\,q_{1,j}=\dots=q_{l_j,j}=0\}\,.
\end{equation}
\item For $j=1,\dots,s$, do:
\begin{enumerate}
\item Compute $p_{i,r,j},q_{k,r,j}\in\R[t]$ such that $B_j=\bigcup_{r=1}^{s_j} B_{r,j}$ with 
\begin{equation}
B_{r,j}=\{p_{1,r,j}>0,\dots,p_{m_{r,j},r,j}>0\,,\,q_{1,r,j}=\dots=q_{l_{r,j},r,j}=0\}
\end{equation}
and the following three conditions hold:
\begin{enumerate}
\item Each of $p_{1,r,j},\dots,p_{m_{r,j},r,j}$ has only simple root.
\item Any of $p_{1,r,j},\dots,p_{m_{r,j},r,j}$ has no root in $\{q_{1,r,j}=\dots=q_{l_{r,j},r,j}=0\}$.
\item Any couple of $p_{1,r,j},\dots,p_{m_{r,j},r,j}$ has no common root.
\end{enumerate}
\item Compute $\widetilde p_{i,r,j}$ as in Definition \ref{def:p.tilde} by using Algorithm \ref{alg:prod.odd.multiplicity}.
\end{enumerate}
\item Set $Q:=\bigcup_{j=1}^s\bigcup_{r=1}^{s_j} \widetilde B_{r,j}$ with $\widetilde B_{r,j}$ as in Definition \ref{def:closure}, i.e.,
\begin{equation}
\widetilde B_{r,j}=\{\widetilde p_{1,r,j}\ge 0,\dots,\widetilde p_{m_{r,j},r,j}\ge 0\,,\,q_{1,r,j}=\dots=q_{l_{r,j},r,j}=0\}\,.
\end{equation}
\end{enumerate}
\end{algorithm}
The first step of Algorithm \ref{alg:closure.image.semi} relies on Basu's method in \cite[Section 3]{basu1999new}.
We use the algorithm mentioned in Lemma \ref{lem:alg.remove.common.root} to do Step 2 (a) of Algorithm \ref{alg:closure.image.semi}.

We guarantee that the output $Q$ of Algorithm \ref{alg:closure.image.semi} is identical to the closure $\overline{f(S)}$ in the following theorem:

\begin{theorem}\label{theo:closure}
Let $f$ be a polynomial and $S$ be a semi-algebraic set.
Then Algorithm \ref{alg:closure.image.semi} with input $f,S$ produces polynomials defining $A_1,\dots,A_w$ of the form \eqref{eq:cl.semialg.set} such that $Q=\bigcup_{j=1}^w A_j$ is exactly the closure of $f(S)$ and the following conditions hold:
\begin{enumerate}
\item Each inequality polynomial defining $A_j$ has only simple roots.
\item Any couple of inequality polynomials determining each $A_j$ has no common root. 
\end{enumerate}
\end{theorem}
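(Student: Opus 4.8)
The plan is to follow the three steps of Algorithm \ref{alg:closure.image.semi} and check that each is licensed by a lemma proved above, the two substantive ingredients being Lemma \ref{lem:alg.remove.common.root} (which supplies the ``nice'' decomposition) and the combination of Lemma \ref{lem:closure} with Lemma \ref{lem:intersec.closure} (which turns the closure of a finite union into the union of the $\widetilde B$'s). First I would dispatch Step 1: Lemma \ref{lem:polyimage.semi} guarantees that $f(S)$ is a semi-algebraic subset of $\R$ and that a symbolic algorithm outputs its (in)equality polynomials, so $f(S)=\bigcup_{j=1}^s B_j$ with each $B_j$ an elementary semi-algebraic set of the displayed form.

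Next, in Step 2(a) I would apply Lemma \ref{lem:alg.remove.common.root} to each $B_j$, obtaining $B_j=\bigcup_{r=1}^{s_j}B_{r,j}$ where every $B_{r,j}$ enjoys the three properties: each inequality polynomial has only simple roots; no inequality polynomial vanishes on the variety cut out by the equality polynomials; no two inequality polynomials share a root. Hence $f(S)=\bigcup_{j,r}B_{r,j}$. To finish, I would apply the second statement of Lemma \ref{lem:intersec.closure} to distribute the closure over this finite union, $\overline{f(S)}=\bigcup_{j,r}\overline{B_{r,j}}$, and then Lemma \ref{lem:closure} to each term: the three properties of $B_{r,j}$ are stronger than the two hypotheses of Lemma \ref{lem:closure} (``no common real root of odd multiplicity'' and ``no root of odd multiplicity in the variety''), so $\overline{B_{r,j}}=\widetilde B_{r,j}$. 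Relabelling the $\widetilde B_{r,j}$ as $A_1,\dots,A_w$ with $w=\sum_j s_j$ gives $Q=\bigcup_{j=1}^w A_j=\overline{f(S)}$.

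It then remains to record the structural conclusions. Each $A_j=\widetilde B_{r,j}$ is, by Definition \ref{def:closure}, the set $\{\widetilde p_{1,r,j}\ge 0,\dots,\widetilde p_{m_{r,j},r,j}\ge 0,\ q_{1,r,j}=\dots=q_{l_{r,j},r,j}=0\}$, which is of the form \eqref{eq:cl.semialg.set}. Condition 1 follows from the first statement of Lemma \ref{lem:properties}, which says every $\widetilde p_{i,r,j}$ has only simple roots. Condition 2 follows because the (real) roots of $\widetilde p_{i,r,j}$ are among the roots of $p_{i,r,j}$, while the third property of $B_{r,j}$ forbids any two of the $p_{i,r,j}$ to have a common root; hence no two of the $\widetilde p_{i,r,j}$ do either.

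The work is essentially bookkeeping: the only point requiring care is to confirm that the hypotheses produced by Lemma \ref{lem:alg.remove.common.root} are precisely what Lemma \ref{lem:closure} consumes, and that replacing $p$ by $\widetilde p$ --- which can only remove real roots --- keeps the ``no common root'' and ``no root in the variety'' conditions intact. I expect no genuine obstacle beyond this matching of conditions together with a clean index relabelling.
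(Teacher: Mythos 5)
Your proposal is correct and follows essentially the same route as the paper: decompose $f(S)$ via Basu's algorithm and Lemma \ref{lem:alg.remove.common.root}, apply Lemma \ref{lem:closure} to each $B_{r,j}$ to get $\widetilde B_{r,j}=\overline{B_{r,j}}$, and distribute the closure over the finite union via the second statement of Lemma \ref{lem:intersec.closure}. Your verification of the two structural conditions on the $A_j$ (via Lemma \ref{lem:properties} and the fact that the roots of $\widetilde p$ are among those of $p$) is a detail the paper leaves implicit, but the argument is the same.
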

\begin{proof}
Lemma \ref{lem:closure} implies that $\widetilde B_{r,j}=\overline{B_{r,j}}$.
From this and the second statement of Lemma \ref{lem:intersec.closure}, we get
\begin{equation}
Q=\bigcup_{j=1}^s\bigcup_{r=1}^{s_j} \overline{B_{r,j}}=\overline{\bigcup_{j=1}^s\bigcup_{r=1}^{s_j} B_{r,j}}=\overline{\bigcup_{j=1}^s B_j}=\overline{f(S)}\,.
\end{equation}
Setting $A_1,\dots,A_w$ as $\widetilde B_{r,j}$s, we obtain the result.
\end{proof}
\begin{remark} 
Assume that $S\subset \R^n$ is determined by $s$ polynomials of degree at most $d$ and $f$ has degree at most $d$.
By Remark \ref{rem:complex.polyimage.semi}, the first step of Algorithm \ref{alg:closure.image.semi} requires $s^{n+1}d^{O(n)}$ arithmetic operations and the degrees of $p_{i,j},q_{k,j}$ are upper bounded by $d^{O(n)}$.
Remark \ref{rem:complex.alg.remove.common.root} yields that Step 2 (a) of Algorithm \ref{alg:closure.image.semi} has a complexity of
$l_j+(m_j+1)(l_j+1)\times O(d^{2\times O(n)})+m_j\times O(d^{3\times O(n)})+m_j!\times(O(d^{2\times O(n)})+2)$.
Thanks to Remark \ref{rem:complex.prod.odd.multiplicity}, the complexity of Step 2 (b) of Algorithm \ref{alg:closure.image.semi} is of $O(d^{3\times O(n)})\times \sum_{r=1}^{s_j} m_{r,j}$.
Note that the degrees of the polynomials do not increase in Steps 2 and 3 of Algorithm \ref{alg:closure.image.semi}.
Thus Algorithm \ref{alg:closure.image.semi} has the number of arithmetic operations as $O(d^{O(n)})$ to produce polynomials of degrees at most $d^{O(n)}$ that define the closure $\overline{f(S)}$.
\end{remark}

The following corollary makes use of Algorithm \ref{alg:closure.image.semi} in polynomial optimization, especially in the case where the objective polynomials do not attain minimum value on the semi-algebraic sets:
\begin{corollary}\label{coro:application}
Let $f$ be a polynomial and $S$ be a semi-algebraic set.
Consider polynomial optimization problem:
\begin{equation}
f^\star=\inf_{x\in S}f(x)\,.
\end{equation}
Assume that the image $f(S)$ is bounded.
Let $Q$ be as the output of Algorithm \ref{alg:closure.image.semi} with input $f,S$.
Then there exist $p^{(j)}\subset \R[t]$ and $p^{(j)}\subset\R[t]$ such that
 \begin{equation}
f^\star=\min_{t\in Q}g(t)\,,
\end{equation}
where $g(t)=t$ and $Q=\bigcup_{j=1}^w S(p^{(j)},q^{(j)})$ satisfying the following conditions:
\begin{enumerate}
\item Each polynomial in $p^{(j)}$ has only simple roots.
\item Any couple of polynomials in $p^{(j)}$ has no common root.
\end{enumerate} 
Moreover, there exists $K\in\N$ such that for all $k\ge K$,
 \begin{equation}
f^\star=\min\limits_{j=1,\dots,w}\rho_k(g,p^{(j)},q^{(j)})\,.
\end{equation}
\end{corollary}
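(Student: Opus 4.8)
The plan is to read off everything about $Q$ from Theorem~\ref{theo:closure} and then apply the exactness result of Lemma~\ref{lem:exact} piece by piece. First I would invoke Theorem~\ref{theo:closure}: the output $Q$ of Algorithm~\ref{alg:closure.image.semi} equals $\overline{f(S)}$ and decomposes as $Q=\bigcup_{j=1}^w A_j$ with each $A_j$ of the form \eqref{eq:cl.semialg.set}, i.e.\ $A_j=S(p^{(j)},q^{(j)})$ for suitable finite families $p^{(j)},q^{(j)}\subset\R[t]$; moreover each polynomial in $p^{(j)}$ has only simple roots and no two of them share a common root, which are precisely conditions (1)--(2) of the statement and also hypotheses (1)--(2) of Lemma~\ref{lem:cond.cert}. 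Discarding the $A_j$ that happen to be empty does not change $Q$, and $Q\neq\emptyset$ because the finiteness of $f^\star$ forces $S\neq\emptyset$; so I may assume every $A_j$ is nonempty.

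Next I would prove $f^\star=\min_{t\in Q}g(t)$ with $g(t)=t$. Since $f(S)$ is bounded, so is its closure $Q=\overline{f(S)}$; being also closed, $Q$ is compact, hence each $A_j$ is compact as a closed subset of $Q$, and the continuous function $g(t)=t$ attains its minimum on $Q$ and on every $A_j$. By definition of the infimum every element of $f(S)$ is $\ge f^\star$, so every element of $\overline{f(S)}=Q$ is $\ge f^\star$, while $f^\star=\inf f(S)\in\overline{f(S)}=Q$; therefore $\min_{t\in Q}g(t)=f^\star$. Applying the same reasoning to each piece, $\rho^\star(g,p^{(j)},q^{(j)})=\min_{t\in A_j}t$ is a finite real number, and $\min_{j=1,\dots,w}\rho^\star(g,p^{(j)},q^{(j)})=\min_{t\in\bigcup_j A_j}t=\min_{t\in Q}t=f^\star$.

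For the ``moreover'' part I would apply Lemma~\ref{lem:exact} to each $j\in\{1,\dots,w\}$: the hypotheses of Lemma~\ref{lem:cond.cert} hold for $p^{(j)},q^{(j)}$ by the first paragraph, the set $S(p^{(j)},q^{(j)})=A_j$ is compact by the second paragraph, and $g(t)=t$, so there is $K_j\in\N$ with $\rho_k(g,p^{(j)},q^{(j)})=\rho^\star(g,p^{(j)},q^{(j)})$ for all $k\ge K_j$. Taking $K:=\max_{j=1,\dots,w}K_j$, for every $k\ge K$ we obtain $\min_{j=1,\dots,w}\rho_k(g,p^{(j)},q^{(j)})=\min_{j=1,\dots,w}\rho^\star(g,p^{(j)},q^{(j)})=f^\star$, which is the claim.

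I expect the only genuinely delicate points to be bookkeeping: making sure the empty pieces have been removed so that each $\rho^\star(g,p^{(j)},q^{(j)})$ is a bona fide finite real number (so Lemma~\ref{lem:exact} applies verbatim), and checking that boundedness of $f(S)$ really does propagate to compactness of $Q$ and of every $A_j$. Everything else is a direct citation of Theorem~\ref{theo:closure} and Lemma~\ref{lem:exact}.
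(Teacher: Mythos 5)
Your proposal is correct and follows essentially the same route as the paper: cite Theorem \ref{theo:closure} for $Q=\overline{f(S)}$ and the structural conditions on $p^{(j)}$, use boundedness of $f(S)$ to get compactness of each $S(p^{(j)},q^{(j)})$, and apply Lemma \ref{lem:exact} piecewise with $K=\max_j K_j$. Your extra bookkeeping (discarding empty pieces so each $\rho^\star(g,p^{(j)},q^{(j)})$ is finite, and spelling out why $f^\star\in Q$) is a sound refinement of details the paper leaves implicit, not a different argument.
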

\begin{proof}
Theorem \ref{theo:closure} yields $Q=\overline{f(S)}$, which implies the first statement.
Let us prove the second one.
Since $f(S)$ is bounded, so is $S(p^{(j)},p^{(j)})$.
From $Q=\bigcup_{j=1}^w S(p^{(j)},p^{(j)})$, we get
\begin{equation}\label{eq:min.rep}
f^\star=\min\limits_{j=1,\dots,w}\rho^\star(g,p^{(j)},q^{(j)})\,.
\end{equation}
Lemma \ref{lem:exact} says that there exists $K_j\in\N$ such that for all $k\ge K_j$, $\rho_k(g,p^{(j)},q^{(j)})=\rho^\star(g,p^{(j)},q^{(j)})$.
Set $K=\max\limits_{j=1,\dots,w}K_j$. 
Hence the result follows thanks to \eqref{eq:min.rep}.
\end{proof}
\if{
\section{Nichtnegativstellens\"atze for multivariate polynomials}
\label{sec:representation}
Let $\Sigma[x]$ stand for the cone of sums of squares of polynomials in $\R[x]$.
For given $f\in\R[x]$, $p=\{p_1,\dots,p_m\}\subset \R[x]$ and $q=\{q_1,\dots,q_l\}\subset \R[x]$, for all $k\in\N$, define
\begin{equation}\label{eq:preordering.multi}
T_k(p,q)[x]:=\left\{\sum\limits_{\alpha\in\{0,1\}^m} \sigma_\alpha p^\alpha+\sum_{j=1}^l \psi_jq_j \left|\begin{array}{rl}
&\sigma_i\in\Sigma[x]\,,\,\psi_j\in\R[x]\,,\\
&\deg(\sigma_\alpha p^\alpha)\le 2k,\deg(\psi_jq_j)\le 2k
\end{array}\right.
\right\}\,,
\end{equation}
where $p^\alpha:=p_1^{\alpha_1}\dots p_m^{\alpha_m}$.
Given $f\in\R[x]$ and $u=\{u_1,\dots,u_r\}\subset \R[t]$, we write $u\circ f=\{u_1\circ f,\dots,u_r\circ f\}$.
In this case, $u\circ f\subset\R[x]$.
Note that if $f\in\R[x]$ and $\sigma\in \Sigma[t]$, then $\sigma\circ f\in\Sigma[x]$.

The following Nichtnegativstellens\"atze follow from Lemma \ref{lem:exact}:
\begin{corollary}
Let the assumption and notation of Corollary \ref{coro:application} hold.
Then there exists $K\in\N$ such that for all $k\ge K$, $f-f^\star\in T_k(p^{(j)}\circ f,q^{(j)}\circ f)[x]$, $j=1,\dots,w$.
\end{corollary}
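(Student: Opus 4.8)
The plan is to substitute $t\mapsto f(x)$ into the univariate sum-of-squares certificates delivered by Corollary~\ref{coro:application}. First I would fix $j\in\{1,\dots,w\}$ and set $\rho_j^\star:=\rho^\star(g,p^{(j)},q^{(j)})$. Since $S(p^{(j)},q^{(j)})\subseteq Q=\overline{f(S)}$ and $f^\star=\min Q$, taking infima gives $\rho_j^\star\geq f^\star$. Because $f(S)$ is bounded, each $S(p^{(j)},q^{(j)})$ is compact, and conditions~1 and~2 of Corollary~\ref{coro:application} are exactly the hypotheses of Lemma~\ref{lem:cond.cert}, so Lemma~\ref{lem:exact} yields $K_j\in\N$ with $g-\rho_j^\star\in T_k(p^{(j)},q^{(j)})[t]$ for all $k\geq K_j$. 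The nonnegative constant $\rho_j^\star-f^\star$ lies in $T_k(p^{(j)},q^{(j)})[t]$ for every $k$ (take the $\alpha=0$ term with $\sigma_0=\rho_j^\star-f^\star\in\Sigma[t]$ and all remaining multipliers zero), and $T_k$ is closed under addition, so $g-f^\star\in T_k(p^{(j)},q^{(j)})[t]$ for all $k\geq K_j$. Putting $K_0:=\max_{1\leq j\leq w}K_j$, for each $k\geq K_0$ and each $j$ I obtain an identity
\[
t-f^\star=\sum_{\alpha\in\{0,1\}^{m_j}}\sigma_\alpha^{(j)}(t)\,(p^{(j)})^\alpha(t)+\sum_{i=1}^{l_j}\psi_i^{(j)}(t)\,q_i^{(j)}(t),
\]
with $\sigma_\alpha^{(j)}\in\Sigma[t]$, $\psi_i^{(j)}\in\R[t]$, $\deg(\sigma_\alpha^{(j)}(p^{(j)})^\alpha)\leq 2k$ and $\deg(\psi_i^{(j)}q_i^{(j)})\leq 2k$.

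Next I would substitute $t=f(x)$ into this identity. Since $(p^{(j)})^\alpha(f(x))=(p^{(j)}\circ f)^\alpha(x)$ and $q_i^{(j)}(f(x))=(q_i^{(j)}\circ f)(x)$, it becomes
\[
f(x)-f^\star=\sum_{\alpha\in\{0,1\}^{m_j}}(\sigma_\alpha^{(j)}\circ f)(x)\,(p^{(j)}\circ f)^\alpha(x)+\sum_{i=1}^{l_j}(\psi_i^{(j)}\circ f)(x)\,(q_i^{(j)}\circ f)(x).
\]
Here $\sigma_\alpha^{(j)}\circ f\in\Sigma[x]$ because a sum of squares composed with a polynomial is again a sum of squares, and $\psi_i^{(j)}\circ f\in\R[x]$. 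For the degree bounds, composition with $f$ multiplies degrees by at most $\deg f$, so $\deg\big((\sigma_\alpha^{(j)}\circ f)\,(p^{(j)}\circ f)^\alpha\big)\leq 2k\deg f$ and likewise for the equality terms; hence the right-hand side exhibits $f-f^\star$ as a member of $T_{k\deg f}(p^{(j)}\circ f,\,q^{(j)}\circ f)[x]$.

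Finally I would take $k=K_0$ and set $K:=K_0\deg f$ (the case $\deg f=0$ being trivial, as then $f(S)$ is a single point and $f\equiv f^\star$). This gives $f-f^\star\in T_K(p^{(j)}\circ f,\,q^{(j)}\circ f)[x]$ for every $j$, and since enlarging the truncation level only adds admissible multipliers we have $T_K(p,q)[x]\subseteq T_k(p,q)[x]$ whenever $k\geq K$, so the membership persists for all $k\geq K$, which is the assertion. I expect the only delicate point to be the degree bookkeeping: the substitution itself is routine, but one must track that the truncation order rescales by the factor $\deg f$ and then invoke monotonicity of $T_k$ in $k$ to fold this into a single $K$ valid for all $j$; no hypothesis beyond those of Corollary~\ref{coro:application} is needed, since everything flows through Lemma~\ref{lem:exact}.
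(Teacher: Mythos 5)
Your proof is correct and follows essentially the same route as the paper's: invoke Lemma \ref{lem:exact} to obtain the univariate certificate for $t-\rho^\star(g,p^{(j)},q^{(j)})$, absorb the nonnegative constant $\rho^\star(g,p^{(j)},q^{(j)})-f^\star$ into the $\alpha=0$ term, and substitute $t=f(x)$. You are in fact more careful than the paper about the degree bookkeeping (the truncation level rescaling by $\deg f$ and the monotonicity $T_K\subseteq T_k$), a point the paper's proof passes over silently.
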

\begin{proof}
By Lemma \ref{lem:exact}, there exists $K\in\N$ such that for all $k\ge K$, $t-\rho^\star(g,p^{(j)},p^{(j)})\in T_k(p^{(j)},p^{(j)})[t]$.
From this and \eqref{eq:min.rep}, we get
\begin{equation}
t-f^\star=(t-\rho^\star(g,p^{(j)},p^{(j)})+(\rho^\star(g,p^{(j)},p^{(j)}-f^\star)\in T_k(p^{(j)},p^{(j)})[t]\,.
\end{equation}
Setting $t=f(x)$ allows us to obtain the result.
\end{proof}
}\fi
\paragraph{Acknowledgements.} 
The author would like to thank Pham Tien Son and Dinh Si Tiep for their valuable discussions about this paper.

\bibliographystyle{abbrv}

\end{document}